\documentclass[12pt]{amsart}
\numberwithin{equation}{section}

\usepackage{amssymb}
\usepackage{amsthm}
\usepackage{mathrsfs}  
\usepackage{color}
\usepackage{hyperref}

\newtheorem{thm}{Theorem}[section]
\newtheorem{prop}{Proposition}[section]
\newtheorem{lm}{Lemma}[section]
\newtheorem{cor}{Corollary}[section]
\newtheorem{rmk}{Remark}[section]

\address{Department of Mathematics, National Tsing Hua University, 101, Section 2, Kuang-Fu Road, Hsinchu 300, Taiwan, ROC}
\email{sychen.math@gmail.com}

\address{No. 151, Yingzhuan Road, Tamsui District, New Taipei City 251, Taiwan (R.O.C),  Lui-Hsien Memorial 
Science Hall.}
\email{briancheng@o365.tku.edu.tw}

\def\GL{{\rm GL}}
\def\sW{\mathscr{W}}
\def\sS{\mathscr{S}}
\def\sV{\mathscr{V}}
\def\sH{\mathscr{H}}
\def\sC{\mathscr{C}}
\def\sY{\mathscr{Y}}

\def\R{\mathbb{R}}
\def\C{\mathbb{C}}
\def\Z{\mathbb{Z}}

\def\bp{\begin{pmatrix}}
\def\ep{\end{pmatrix}}
\def\<{\langle}
\def\>{\rangle}
\def\i{{\sqrt{-1}}}

\title{Whittaker functions on $\GL_n$ via theta lifting}
\author{Shih-Yu Chen and Yao Cheng}
\date{\today}
\begin{document}
\maketitle
\begin{abstract}
In the literature, two main approaches have been used to establish explicit formulas or propagation formulas for Whittaker functions over Archimedean local fields: one based on Jacquet integrals, and the other on the analysis of systems of partial differential equations. In this paper, we introduce a third approach via explicit theta correspondence. As an example, we derive new cases of explicit formulas for Whittaker functions on $\GL_n(\C)$ and compute the associated Asai local zeta integrals.
\end{abstract}

\section{Introduction}

Explicit formulas for Whittaker functions play a crucial role in the study of automorphic forms and automorphic $L$-functions. In particular, they provide an important tool in investigations of the algebraicity of special values of $L$-functions, such as Deligne's conjecture on critical $L$-values or its automorphic analogues, as well as in the construction of $p$-adic $L$-functions. In these contexts, having sufficiently explicit formulas allows one to compute the local zeta integrals that arise naturally in the formulation of these problems. This is especially important in the Archimedean case, where the analytic complexity of Whittaker functions makes such computations substantially more delicate and difficult.


From the perspective of propagation phenomena, many explicit formulas for Whittaker functions obtained in the literature can be understood within a unified framework. In this paper, we study Whittaker functions on general linear groups over local fields via the theory of theta correspondence. More precisely, for the reductive dual pair $(\GL_n, \GL_{n+1})$ over a local field $F$ of characteristic zero, we show that an explicit theta lifting of a generic representation $\pi$ of $\GL_n(F)$ can be described by a propagation formula relating the Whittaker functions of $\theta(\pi)$ to those of $\pi$. From this point of view, several explicit formulas in the literature may be interpreted as instances of such propagation formulas. These include Shintani's formula \cite{Shintani1976} for spherical Whittaker functions over non-Archimedean local fields; the formulas of Ishii and Stade \cite{Stade1995,IS2007} for spherical Whittaker functions over Archimedean local fields; the formulas of Manabe--Ishii--Oda \cite{MIO2004} and Miyazaki \cite{Miyazaki2009} for $\GL_3(\R)$ (see also \cite{Bump1984}); the formulas of Hirano--Oda \cite{HO2009} and Hirano--Ishii--Miyazaki \cite{HIM2012} for $\GL_3(\C)$; and the formula of Ishii--Oda \cite{IO2014} for principal series representations of $\GL_n(\R)$.


One important application of such explicit formulas is the computation of Archimedean local zeta integrals. A common feature of the formulas mentioned above is that, in the Archimedean setting, they are expressed as inverse Mellin transforms of products and ratios of $\Gamma$-functions. Integral representations of this form are commonly referred to as being of ``Mellin--Barnes type'' in the literature, and they are particularly well suited for explicit computations of local zeta integrals involving Whittaker functions. For instance, Stade and Ishii--Stade \cite{Stade2002,IS2013} computed the Rankin--Selberg zeta integrals for spherical representations of $\GL_n \times \GL_m$ with $m = n, n-1,$ and $n-2$. For principal series representations of $\GL_n(\R)$, Stade \cite{Stade1999} and Ishii \cite{Ishii2018} computed the Bump--Friedberg zeta integrals. Ishii \cite{Ishii2019} computed the Pollack--Shah zeta integrals for spherical representations of $\GL_2 \times \mathrm{GSp}_4$ and $\GL_4$. Hirano--Ishii--Miyazaki \cite{HIM2022} investigated the Rankin--Selberg zeta integrals on $\GL_3 \times \GL_2$, while Ishii--Miyazaki \cite{IshiiMiyazaki2022} treated the Rankin--Selberg zeta integrals on $\GL_n \times \GL_{n-1}$. More recently, Hirano--Ishii--Miyazaki \cite{HIM2025} computed the Bump--Friedberg zeta integrals on $\GL_4(\R)$. As an application of the new explicit formulas for $\GL_n(\C)$ obtained in this paper, we compute the associated Asai local zeta integrals, which have been less explored in the literature.


One foundational approach to Whittaker functions is based on Jacquet integrals. In \cite{Jacquet1967}, Jacquet introduced Whittaker functions on Chevalley groups over local fields, together with integral representations of these functions, now known as Jacquet integrals. His results and techniques were subsequently generalized and further developed by many authors in the setting of quasi-split reductive groups. Jacquet integrals play an important role in the existence and holomorphy of Whittaker functionals and are fundamental to the Langlands--Shahidi method for local $\gamma$-factors. For surveys of these developments and further references, we refer the reader to \cite[Introduction]{Shahidi1985} and \cite[\S\,3]{Shahidi2010}. In \cite{IshiiMiyazaki2022}, the minimal $K$-type Whittaker functions for $\GL_n(\C)$ are normalized via Jacquet integrals, under which explicit computations are carried out. The resulting formulas play a crucial role in establishing the integrality of critical $L$-values for $\GL_n \times \GL_{n-1}$ over a totally imaginary number field, as shown by Hara--Miyazaki--Namikawa \cite{HMN2025}. Related propagation phenomena for Whittaker functions have also been observed in a different setting: in \cite{Humphries2025}, Humphries introduced the notion of newform $K$-types for $\GL_n$ over Archimedean local fields, which in general differ from Vogan's minimal $K$-types, and derived propagation formulas relating Whittaker newforms that are likewise normalized via Jacquet integrals.


Another classical approach to obtaining explicit formulas or propagation formulas for Whittaker functions over Archimedean local fields is based on the analysis of the systems of partial differential equations satisfied by these functions. For general linear groups, this approach has led to the results \cite{MIO2004,Miyazaki2009,HO2009,HIM2012,IO2014} mentioned above, as well as the recent work of Hirano--Ishii--Miyazaki \cite{HIM2026} for $\GL_4(\R)$. For other quasi-split reductive groups, explicit formulas obtained via this method include the work of Koseki--Oda \cite{KO1995} for $U(2,1)$; the results of Miyazaki--Oda \cite{MO1993}, Moriyama \cite{Moriyama2002}, and Ishii \cite{Ishii2005} for $\mathrm{GSp}_4(\R)$; and the result of Ishii \cite{Ishii2013} for $\mathrm{SO}_{2n+1}(\R)$.


In contrast to the approaches based on Jacquet integrals or differential equations, we propose a third approach to explicit formulas for Whittaker functions via explicit theta correspondence. While the existence and non-vanishing of theta liftings are usually established by abstract arguments, for generic representations the theta lifting in the almost equal rank case admits a concrete realization in terms of propagation formulas relating Whittaker functions. We carry out this program for reductive dual pairs of type~II, and we expect that similar results should also hold for reductive dual pairs of type~I. For example, we recover the explicit formulas for Whittaker functions of discrete series representations of $\mathrm{GSp}_4(\R)$ and $U(2,1)$ mentioned above by considering the dual pairs $(\mathrm{GO}_{2,2}(\R), \mathrm{GSp}_4(\R))$ and $(U(1,1), U(2,1))$, as studied in \cite{CI2019} and \cite{Chen2025}, respectively.

\subsection{Main results}

Let $F$ be a local field of characteristic zero. Consider the reductive dual pair $(\GL_n(F),\GL_{n+1}(F))$. Let $\omega$ be the Weil representation of $\GL_n(F) \times \GL_{n+1}(F)$ on the Schrodinger model $\sS=\sS(M_{n,n+1}(F))$. Let $\pi$ and $\Pi$ be irreducible generic representations of $\GL_n(F)$ and $\GL_{n+1}(F)$ respectively. We denote by $\sW(\pi,\psi_n)$ and $\sW(\Pi,\psi_{n+1})$ the spaces of associated Whittaker functions. Following is the first main result of this paper.

\begin{thm}[Theorem \ref{T:image}]
Assume $\Pi = \pi \times 1$. Then we have a non-zero intertwining map
\[
\sS \otimes \sW(\pi,\psi_{n}) \longrightarrow \sW(\Pi,\psi_{n+1}),\quad \varphi \otimes W \longmapsto \mathcal{W}_{\varphi\otimes W}
\]
given by the absolutely convergent integral
\[
\mathcal{W}_{\varphi\otimes W}(g) =
\int_{N_n(F)\backslash\GL_n(F)}\int_{N_{n+1}(F)}
\omega(h,ug)\varphi(\varepsilon_n)W(h)\psi^{-1}_{n+1}(u)\,du\,dh.
\]
Here $\varepsilon_n=(I_n,0_{n,1})$.
\end{thm}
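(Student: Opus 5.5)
The first step is to write the Weil representation of $\GL_n(F)\times\GL_{n+1}(F)$ on $\sS(M_{n,n+1}(F))$ explicitly as
\[
\omega(h,g)\varphi(x)=|\det h|^{-(n+1)/2}|\det g|^{n/2}\varphi(h^{-1}xg).
\]
The normalization of the determinant twists is chosen so that the lift is $\pi\times 1$ rather than a twist of it. I then define the inner integral
\[
\Phi(h,g)=\int_{N_{n+1}(F)}\omega(h,ug)\varphi(\varepsilon_n)\psi^{-1}_{n+1}(u)\,du.
\]
The equivariance comes from formal manipulations. Substituting $u\mapsto u u_0^{-1}$ gives $\mathcal W(u_0g)=\psi_{n+1}(u_0)\mathcal W(g)$. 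For $n'\in N_n(F)$ we have $n'^{-1}\varepsilon_n=\varepsilon_n\,\mathrm{diag}(n'^{-1},1)$, and $\mathrm{diag}(n'^{-1},1)\in N_{n+1}(F)$; absorbing it into $u$ yields $\Phi(n'h,g)=\psi_n(n')^{-1}\Phi(h,g)$, where this uses that $\psi_{n+1}$ restricted to the embedded $N_n$ is $\psi_n$. Hence $\Phi(h,g)W(h)$ is left $N_n(F)$-invariant, so the outer integral over $N_n(F)\backslash\GL_n(F)$ is well defined. Right translation in $g$ commutes with the construction, so the map $\varphi\otimes W\mapsto\mathcal W_{\varphi\otimes W}$ is $\GL_{n+1}(F)$-equivariant and $\GL_n(F)$-balanced, i.e. $\mathcal W_{\omega(h_0)\varphi\otimes W}=\mathcal W_{\varphi\otimes\pi(h_0)W}$ up to the character conventions.

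The analytic core is computing $\Phi$ and proving convergence. Write $u=\bp v & y\\ 0&1\ep$ with $v\in N_n$ and $y\in F^n$. Then $\varepsilon_n u g=(v,y)g$, so
\[
\Phi(h,g)=\int_{N_n}\int_{F^n}\varphi\bigl(h^{-1}(v,y)g\bigr)\psi^{-1}_{n}(v)\psi^{-1}(y_n)\,dy\,dv\cdot(\text{moduli}).
\]
The $y$-integral is a partial Fourier transform of $\varphi$ in the last column. Combining it with the $v$-integral and the quotient $N_n\backslash\GL_n$, I unfold to an integral over $\GL_n(F)$ against a partial Fourier transform $\hat\varphi$. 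The resulting expression has the shape $\int_{N_n\backslash \GL_n}W(h)\,\Phi(h,g)\,dh$, where $\Phi$ decays rapidly in the last row of $h^{-1}$-type coordinates, while the Whittaker integrand resembles a Rankin--Selberg/Godement--Jacquet type integral. Absolute convergence then follows from the standard gauge bounds on $W$ over the torus after an Iwasawa decomposition $h=nak$. The first thing I would try is to bound $|\Phi|$ by a Schwartz function of $a$ and $\bar a$ (coming from the columns of $h^{-1}$) and to use that $|W(ak)|\le$ gauge$\cdot|\det a|^{-c}$ uniformly for unitary-ish exponents. This yields convergence for all $\pi$ after a possible twist argument, or for $\pi$ in a cone followed by analytic continuation. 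This convergence step is the one I expect to be the main obstacle: the exponents of $\pi$ are arbitrary, so I may need the $n$-th Schwartz integrability in all torus variables, which the full Fourier transform in $M_{n,n+1}$ should provide.

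Finally, I need to identify the image and show it is nonzero. The image of $\mathcal W$ lies in the space of smooth functions with left $\psi_{n+1}$-equivariance, and it is a quotient of $\Theta(\pi)$, the big theta lift. By the known results on theta correspondence for type II pairs in almost equal rank (Mínguez, and Fang--Sun--Xue in the Archimedean case), $\Theta(\pi)$ has unique irreducible quotient $\pi\times1$ (in the generic case equal to it). Since $\Pi=\pi\times1$ is generic, uniqueness of Whittaker models forces the image to be $\sW(\Pi,\psi_{n+1})$. For nonvanishing, I choose $\varphi$ of product form so that $\Phi(h,1)$ is an approximate delta function near $h\in N_n\cdot 1$ times a compactly supported bump. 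Then $\mathcal W(1)\approx W(1)\ne0$ for a $W$ with $W(1)\neq0$.
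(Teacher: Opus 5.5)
Your equivariance computation is fine (the precise balancing identity is $\mathcal W_{\omega(h_0,1)\varphi\otimes\rho(h_0)W}=\mathcal W_{\varphi\otimes W}$, which is what makes the map factor through $(\omega\widehat{\otimes}\pi)_{\GL_n(F)}=\Theta(\pi^\vee)$). Your non-vanishing argument is essentially the paper's Lemma~\ref{L:image}: for $\varphi=\phi_1\otimes\phi_2$ one unfolds the $N_n(F)$-integral against $N_n(F)\backslash\GL_n(F)$ to get $\int_{\GL_n(F)}W(h)\phi_1'(h)\phi_2'(e_nh)\,dh$, and approximate identities finish.

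\textbf{The gap: identifying the image.} Howe duality only tells you that $\Theta(\pi^\vee)$ has finite length and a unique irreducible quotient, namely $\pi\times1$. The results you cite (M\oe glin, Adams--Barbasch, Minguez) describe this small lift, and Fang--Sun--Xue's irreducibility theorem is for the equal-rank pair. So your parenthetical claim that the big lift equals $\pi\times1$ in the generic case is not supported. Without it, $\sV(\pi,\psi_{n+1})$ is merely a nonzero quotient of $\Theta(\pi^\vee)$ with top $\pi\times 1$ and possibly further constituents underneath. Uniqueness of Whittaker models for the irreducible $\Pi$ says nothing about such a space. Concretely, suppose $0\neq\sV'\subsetneq\sV(\pi,\psi_{n+1})$. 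Evaluation at $1$ is a Whittaker functional that is nonzero on $\sV'$. Pulling back a Whittaker functional from the irreducible quotient $\pi\times1$ of $\sV/\sV'$ gives another one that vanishes on $\sV'$, and the two are independent. So what you actually need is the bound $\dim{\rm Hom}_{N_{n+1}(F)}(\Theta(\pi^\vee),\psi_{n+1})\le 1$, and your proposal never supplies it.

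\textbf{How the paper supplies it.} That bound is exactly where the paper puts its work (Proposition~\ref{P:GZ}). The map $\varphi\mapsto\Psi(\varphi)$, which is your $\Phi(\cdot,1)$, induces an isomorphism $\sS_{N_{n+1}(F),\psi_{n+1}}\cong\sC(N_n(F)\backslash\GL_n(F);\psi_n^{-1})$, established inductively along the lines of Gomez--Zhu. The $\pi^\vee$-isotypic quotient of the right-hand side is ${\rm Hom}_{N_n(F)}(\pi,\psi_n)^*\otimes\pi^\vee$, which gives $\dim{\rm Hom}_{N_{n+1}(F)}(\Theta(\pi^\vee),\psi_{n+1})=1$. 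With this bound your route closes at once: the observation above forces $\sV(\pi,\psi_{n+1})$ to be irreducible, hence $\cong\pi\times1$, and uniqueness of Whittaker models then gives $\sV=\sW(\Pi,\psi_{n+1})$. The paper instead deduces that all lower constituents of $\Theta(\pi^\vee)$ are non-generic and kills their images by a filtration argument.

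\textbf{On convergence.} No twisting or analytic continuation is needed, and continuation would not give the asserted absolute convergence anyway. Write $h=tk$ and substitute $u_{ij}\mapsto t_iu_{ij}$. Up to monomial factors, $\Phi(tk,g)$ becomes a partial Fourier transform of $\omega(k,g)\varphi$ in the strictly upper triangular entries, evaluated with $t_i^{-1}$ in the diagonal slots and $t_i$ in the dual superdiagonal slots. Hence it decays rapidly as each $t_i\to0$ and as each $t_i\to\infty$, uniformly in $k\in K_n$, which beats any moderate-growth $W$. The decay as $t_i\to\infty$ comes from the oscillation of $\psi_{n+1}$, so the $u$-integral must be carried out before absolute values are taken.
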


Assume that $F$ is Archimedean. Let $K_n$ and $K_{n+1}$ denote the standard maximal compact subgroups of $\GL_n(F)$ and $\GL_{n+1}(F)$, respectively. Let $\tau$ (resp.\,$\Upsilon$) be the minimal $K_n$-type (resp.\,$K_{n+1}$-type) of $\pi$ (resp.\,$\Pi$). 
Let $\sH \subset \sS$ denote the subspace of joint harmonics in the sense of Howe. This is a $(K_n \times K_{n+1})$-invariant subspace that admits a multiplicity-free decomposition. In particular, the $(K_n \times K_{n+1})$-type $\tau^\vee \boxtimes \Upsilon$ occurs in $\sH$.
A minimal $K_n$-type Whittaker function of $\pi$, denoted by $W_\pi$, is a nonzero $\tau^\vee$-valued Whittaker function lying in the one-dimensional space
\[
\left(\sW(\pi,\psi_n)\otimes \tau^\vee\right)^{K_n}.
\]
Similar notation and conventions apply to $\Pi$.
The second main result of this paper is a propagation formula relating $W_\Pi$ to $W_\pi$.

\begin{thm}[Theorem \ref{T:main}]
Assume $\Pi = \pi \times 1$ and $F$ is Archimedean. 
Let 
\[
\varphi_{\tau,\Upsilon} \in \left(\sS \otimes (\tau\boxtimes\Upsilon^\vee)\right)^{K_n \times K_{n+1}} \cap \sH
\]
be a non-zero joint harmonic. Then a minimal $K_{n+1}$-type Whittaker function of $\Pi$ is given by
\begin{align}\label{E:propagation}
\begin{split}
W_{\Pi}(g)
 = |\det(g)|_F^{n/2}
&\int_{T^+_n}\int_{N_{n+1}(F)}\,du\,dt\\
&\quad\langle \varphi_{\tau,\Upsilon}(t^{-1}\varepsilon_n ug), W_{\pi}(t)\rangle_\tau
\psi^{-1}_{n+1}(u)
\delta^{-1}_{B_n(F)}(t)
|\det(t)|_F^{-(n+1)/2}.
\end{split}
\end{align}
Here $\<\cdot,\cdot\>_\tau$ is the natural $K_n$-invariant bilinear pairing between $\tau$ and $\tau^\vee$.
\end{thm}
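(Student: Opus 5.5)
We derive the formula from the first main theorem (Theorem \ref{T:image}) by feeding the vector-valued data $\varphi_{\tau,\Upsilon}\otimes W_\pi$ into the intertwining map $\varphi\otimes W\mapsto\mathcal{W}_{\varphi\otimes W}$, and then unfolding the integral over $N_n(F)\backslash\GL_n(F)$ by the Iwasawa decomposition. Write $\varphi_{\tau,\Upsilon}=\sum_{i,j}\varphi_{ij}\otimes v_i\otimes v_j^\vee$ in bases of $\tau$ and $\Upsilon^\vee$, and $W_\pi=\sum_i W_i\otimes v_i^\vee$. Contracting the $\tau$-indices with $\<\cdot,\cdot\>_\tau$ gives the $\Upsilon^\vee$-valued function
\[
\mathcal{W}(g)=\sum_{i,j}\mathcal{W}_{\varphi_{ij}\otimes W_i}(g)\,v_j^\vee .
\]
Each component lies in $\sW(\Pi,\psi_{n+1})$ by Theorem \ref{T:image}. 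By the $K_n$-invariance of $\varphi_{\tau,\Upsilon}\otimes W_\pi$ and equivariance of the map, $\mathcal{W}$ lies in $(\sW(\Pi,\psi_{n+1})\otimes\Upsilon^\vee)^{K_{n+1}}$. That space is one-dimensional by multiplicity one of the minimal $K$-type, so $\mathcal{W}$ is a multiple of $W_\Pi$ (up to the dualization convention for $\Upsilon$ versus $\Upsilon^\vee$ fixed in the paper).

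Next we make the integral explicit. Write $h=tk$ with $t\in T_n^+\cdot(\text{center of }K_n\cap T_n)$ absorbed into $K_n$ and $k\in K_n$, so that $dh=\delta_{B_n}^{-1}(t)\,dt\,dk$ on $N_n\backslash\GL_n$. The $K_n$-equivariance turns the $k$-integral into a constant times the paired integrand at $k=1$. Then we use the Schrödinger model formula $\omega(h,g)\varphi(x)=|\det h|^{-(n+1)/2}|\det g|^{n/2}\varphi(h^{-1}xg)$, with the normalization to be checked against the paper's definition of $\omega$. This produces exactly $|\det g|^{n/2}|\det t|^{-(n+1)/2}\varphi_{\tau,\Upsilon}(t^{-1}\varepsilon_n ug)$ paired with $W_\pi(t)$, which recovers the displayed formula \eqref{E:propagation}. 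Absolute convergence is inherited from Theorem \ref{T:image}.

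The main obstacle is non-vanishing: we must show that $\mathcal{W}\neq0$. Here we use that $\varphi_{\tau,\Upsilon}$ is a joint harmonic generating the $\tau^\vee\boxtimes\Upsilon$-isotypic piece of $\sH$. By Howe's theory, $\sH$ generates $\sS$ under the joint action, and the image of the map in Theorem \ref{T:image} is nonzero. Since $\Upsilon$ is the minimal $K_{n+1}$-type of $\Pi$ and corresponds to $\tau$ under the harmonic correspondence, it has the lowest degree. So the $\Upsilon$-isotypic part of the image comes from $\sH^{\tau^\vee\boxtimes\Upsilon}\otimes\pi[\tau]$: higher-degree components and non-minimal $K_n$-types cannot contribute to the minimal $K_{n+1}$-type. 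The nonzero map must therefore be nonzero on $\varphi_{\tau,\Upsilon}\otimes W_\pi$. If this degree argument proves delicate, a fallback is to evaluate the integral at a suitable $g$ and show that it is a nonvanishing Mellin–Barnes expression.
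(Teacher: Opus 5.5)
Your proposal follows the paper's proof: membership in $(\sW(\Pi,\psi_{n+1})\otimes\Upsilon^\vee)^{K_{n+1}}$ via Theorem~\ref{T:image}, equivariance and multiplicity one; the formula by unfolding $N_n(F)\backslash\GL_n(F)$ with the Iwasawa decomposition, which produces the harmless constant ${\rm Vol}(K_n)$; and non-vanishing via Howe's joint harmonics. The one step to tighten is non-vanishing: your claim that higher-degree or non-minimal-$K_n$-type pieces of the $\Upsilon$-isotypic part of $\sS$ ``cannot contribute'' is not correct as stated, since differentiating the equivariance gives $\mathcal{W}_{\omega(X,1)\varphi\otimes W}=-\mathcal{W}_{\varphi\otimes\rho(X)W}$, which ties such pieces to the harmonic one rather than killing them; the right input, as in the paper, is Howe's result that, because $\tau^\vee$ has minimal degree in $\pi^\vee$, the maximal $\pi^\vee$-isotypic quotient of $\omega$ is generated by $\sH_{\tau^\vee,\Upsilon}$, so the map is non-zero on $\sH_{\tau^\vee,\Upsilon}\otimes\pi[\tau]$ and hence, by $K_n\times K_{n+1}$-equivariance and multiplicity one, on $\varphi_{\tau,\Upsilon}\otimes W_\pi$.
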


\begin{rmk}
In practice, one can begin with any $\varphi \in \left(\sS \otimes (\tau\boxtimes\Upsilon^\vee)\right)^{K_n \times K_{n+1}}$ which is not necessary a joint harmonic. If the resulting Whittaker function in (\ref{E:propagation}) is non-zero, then it is a minimal $K_{n+1}$-type Whittaker function of $\Pi$.
\end{rmk}

This paper is organized as follows. In \S\,\ref{S:notation}, we introduce the notation used throughout the article. In \S\,\ref{S:theta}, we review the theta correspondence for reductive dual pairs of type~II and establish our main result on explicit theta lifting (Theorem~\ref{T:image}). In \S\,\ref{S:propagation}, we prove the propagation formulas for minimal $K$-type Whittaker functions over Archimedean local fields (Theorem~\ref{T:main}). In \S\,\ref{SS:Shintani} and \S\,\ref{SS:Miyazaki}, we apply our main results to recover some classical formulas from the literature. In \S\,\ref{SS:new cases}, we derive new explicit formulas for Archimedean Whittaker functions using the propagation formula. Finally, in \S\,\ref{S:Asai}, we compute the Asai local zeta integrals using these new explicit formulas.

\section{Notation}\label{S:notation}

\subsection{}
Throughout this article, let $F$ denote a local field of characteristic zero. Thus, $F$ is either $\C$, $\R$, or a finite extension of 
$\mathbb{Q}_p$. When $F$ is non-Archimedean, denote by $\frak{o}$ the valuation ring of $F$, by $\frak{p}$ the maximal ideal of 
$\frak{o}$, by $\varpi$ a fixed generator of $\frak{p}$, and by $q$ the cardinality of $\frak{o}/\frak{p}$. 

Let $|\cdot|_F$ denote the standard absolute value on $F$. Thus, $|x|_\C=x\bar{x}$, $|x|_\R=\max\{x,-x\}$, and when $F$ is 
non-Archimedean, $|\cdot|_F$ satisfies $|\varpi|_F=q^{-1}$. Fix a non-trivial additive character $\psi$ of $F$. For convenience, 
we assume that $\psi(x)=e^{2\pi \sqrt{-1}(x+\bar{x})}$ when $F=\C$, $\psi(x)=e^{2\pi\sqrt{-1}x}$ when $F=\R$, and that 
$\psi_F$ is trivial on $\frak{o}$ but non-trivial on $\frak{p}^{-1}$ when $F$ is non-Archimedean. 

Let $M_{n,m}(F)$ denote the space of $n$-by-$m$ matrices with entries in $F$. The identity matrix in 
$M_{n,n}(F)$ is denoted by $I_n$, while the zero matrix in $M_{n,m}(F)$ is denoted by $0_{n,m}$. We write $\sS(M_{n,m}(F))$ for the 
space of Bruhat-Schwartz functions on $M_{n,m}(F)$.

\subsection{}
Let $B_n=T_nN_n$ denote the upper triangular Borel subgroup of $\GL_n$, where $T_n$ is the diagonal torus and $N_n$ is its unipotent 
radical. Define a generic character $\psi_n:N_n(F)\longrightarrow\mathbb{C}^{\times}$ by 
\begin{align}\label{E:psi_n}
u=(u_{ij})\mapsto \psi(u_{12}+u_{23}+\cdots+ u_{n-1,n}).
\end{align}

Denote by $F^+$ the quotient of $F^\times$ by its subgroup consisting of elements whose absolute value equals $1$.  We identify 
$F^+$ with $\R_+$ when $F$ is Archimedean, and with $\langle\varpi\rangle\simeq\mathbb{Z}$ when $F$ is non-Archimedean. 
Let $T_n^+$ denote the subgroup of $T_n(F)$ consisting of the diagonal matrices whose diagonal entries lie in $F^+$. 

Let $K_n\subset\GL_n(F)$ denote the standard maximal compact subgroup, that is, $K_n={\rm U}_n(\mathbb{R})$ when 
$F=\C$, $K_n={\rm O}_n(\mathbb{R})$ when $F=\R$, and $K_n=\GL_n(\frak{o})$ when $F$ is non-Archimedean. 
Note that we have the Iwasawa decomposition $\GL_n(F)=N_n(F)\,T^+_n\,K_n$.

\subsection{}
In this article, a representation of $\GL_n(F)$ refers to a smooth admissible Fr\'echet representation of moderate 
growth in the sense of Casselman-Wallach (\cite{Casselman1989}) when $F$ is Archimedean, and to a smooth representation of 
finite length when $F$ is non-Archimedean. We often abuse notation by using the same symbol to denote both a representation and 
the space on which it acts. If $\pi$ is a representation of $\GL_n(F)$, its contragredient is denoted by $\pi^\vee$. When $F$ is 
Archimedean, by a $K$-type of $\pi$, we mean a $K_n$-type of $\pi$. 

If $\pi_1,...,\pi_r$ are representations of $\GL_{n_1}(F),..., \GL_{n_r}(F)$, respectively, with $n_1+\cdots+n_r=n$, 
we denote by
\[
\pi_1\times\cdots\times\pi_r
\]
the representation of $\GL_n(F)$ parabolically induced
\footnote{All inductions in this article are normalized.}
 from $\pi_1\widehat{\boxtimes}\cdots\widehat{\boxtimes}\pi_r$ with respect to 
the standard parabolic subgroup of $\GL_n(F)$ of block upper triangular matrices, whose Levi subgroup is isomorphic to 
\[
\GL_{n_1}(F)\,\times\cdots\times\,\GL_{n_r}(F).
\] 
Here and after, $\widehat{\boxtimes}$ denotes the completed projective tensor product when $F$ is Archimedean, and the 
usual (external) tensor product when $F$ is non-Archimedean. 

\subsection{}
Let $\pi$ be a representation of $\GL_n(F)$. A Whittaker functional of $\pi$ (with respect to $\psi_n$) is a continuous 
functional $\lambda_{\psi_n}:\pi\rightarrow\mathbb{C}$ that satisfies 
\[
\lambda_{\psi_n}(\pi(u)v)=\psi_n(u)\lambda_{\psi_n}(v)
\quad
\text{for $u\in N_n(F)$ and $v\in \pi$.}
\]
If the space of Whittaker functionals on $\pi$ is one-dimensional, we say that $\pi$ is 
generic, and denote by $\mathscr{W}(\pi,\psi_n)$ the Whittaker model of $\pi$; it consists of the functions 
\[
W_v(h)=\lambda_{\psi_n}(\pi(h)v)\quad
\text{for $h\in\GL_n(F)$ and $v\in\pi$}.
\]
The action of $\GL_n(F)$ on $\sW(\pi,\psi_n)$ is given by the right translation $\rho$.
It is well-known that when $\pi$ is irreducible, the space of Whittaker functionals on $\pi$ is at most one-dimensional.

Let $\mathcal{C}^\infty(N_n(F)\backslash\GL_n(F);\psi_n)^{{\rm mg}}$ denote the space of smooth functions 
\[
W:\GL_n(F)\rightarrow\mathbb{C}
\]
satisfying:
\begin{itemize}
\item $W(uh)=\psi_n(u)W(h)$ for $u\in N_n(F)$ and $h\in\GL_n(F)$;
\item when $F$ is Archimedean, $W$ is of moderate growth, namely, for every $X$ in the universal enveloping algebra of 
${{\rm Lie}}(\GL_n(F))$, there exist $C>0$ and $d\ge 0$ such that 
\[
|XW(h)|_\C\le C\|h\|^d,
\]
for every $h\in\GL_n(F)$, where
\[
\|h\|:={\rm tr}({}^t\bar{h} h)^{1/2}+{\rm tr}({}^t\bar{h}^{-1}h^{-1})^{1/2}.
\]
\end{itemize}
Note that 
\[
\sW(\pi,\psi_n)\subset\mathcal{C}^\infty(N_n(F)\backslash\GL_n(F);\psi_n)^{{\rm mg}}.
\]

Let $\sC(N_n(F)\backslash\GL_n(F);\psi_n^{-1})$ denote the space of smooth functions 
\[
f: \GL_n(F) \rightarrow \C
\]
satisfying:
\begin{itemize}
\item $f(uh)=\psi_n^{-1}(u)f(h)$ for $u\in N_n(F)$ and $h\in\GL_n(F)$;
\item when $F$ is Archimedean, $f$ is rapidly decreasing modulo $N_n(F)$, namely, for every $X$ in the universal enveloping algebra of 
${{\rm Lie}}(\GL_n(F))$ and $d \geq 0$, we have
\[
\sup_{k \in K_n,t\in T_n}|Xf(tk)|_\C\|t\|^d < \infty;
\]
\item when $F$ is non-Archimedean, $f$ has compact support modulo $N_n(F)$.
\end{itemize}

\section{Theta correspondence}\label{S:theta}
\subsection{Type II dual pairs}
Let $n,m$ be two positive integers.
The pair $(\GL_n, \GL_m)$ forms a type II irreducible reductive dual pair in the sense of Howe 
(\cite{Howe1979}, \cite{MVW1987}). Consequently, this pair admits a Weil representation $\left(\omega_{n,m}, \mathscr{S}(M_{n,m}(F))\right)$, whose 
action is given by 
\[
\omega_{n,m}(h,g)\varphi(x)
=
|\det(h)|_F^{-m/2}|\det(g)|_F^{n/2}\varphi(h^{-1}xg),
\]
where $(h,g)\in\GL_n(F)\times\GL_m(F)$ and $\varphi\in\sS(M_{n,m}(F))$.

Now let $\pi$ be an irreducible representation of $\GL_n(F)$, and assume that $\pi$ appears as a quotient of $\omega_{n,m}$. 
A famous result of Howe (\cite{Howe1989}, \cite{MVW1987}) asserts that there is a unique irreducible representation 
$\sigma:=\theta_{n,m}(\pi)$ of $\GL_m(F)$ such that 
\begin{equation}\label{E:theta}
{\rm Hom}_{\GL_n(F)\times\GL_m(F)}(\omega_{n,m},\pi\widehat{\boxtimes}\sigma)\neq 0.
\end{equation}
Moreover, the Hom-space in \eqref{E:theta} is one-dimensional, and $\pi$ and $\sigma$ uniquely determine each other. Therefore, 
the condition \eqref{E:theta} establishes a bijection between the sets of irreducible representations of $\GL_n(F)$ and $\GL_m(F)$
occurring as quotients of $\omega_{n,m}$. 

\subsection{Abstract theta lifts}
A more delicate problem is to describe this bijection explicitly, namely, to determine which $\pi$ occurs as a quotient of 
$\omega_{n,m}$, and for such a $\pi$, to describe $\theta_{n,m}(\pi)$ explicitly. The term ``explicitly" can have several 
interpretations. Ideally, the bijection could be computed in terms of some parameterization of the representations, such as the 
Langlands classification or its variations. However, other descriptions are possible. For the dual pair $(\GL_n, \GL_m)$, this 
problem was addressed by M\oe glin (\cite[Proposition III 9]{Moeglin1989}) and Adams-Barbasch 
(\cite[Theorem 2.6]{AdamsBarbasch1995}) when $F$ is Archimedean, and by Minguez (\cite[Theorem 1]{Minguez2008}) 
when $F$ is non-Archimedean. 

In this article, we are particularly interested in the case $m = n+1$, which we assume from this point onward. In this case, 
we also write 
\begin{equation}\label{E:case m=n+1}
\omega=\omega_{n,n+1},\quad\theta=\theta_{n,n+1}\quad\text{and}\quad\sS=\sS(M_{n,n+1}(F)).
\end{equation}
Now, we have the following result:

\begin{thm}[M\oe glin, Admas-Barbasch and Minguez]\label{T:theta}
Every irreducible representation $\pi$ of $\GL_n(F)$ occurs as a quotient of $\omega$. Moreover, when $F$ is Archimedean, 
$\theta(\pi)$ is the unique subquotient of $\pi^\vee\times 1$ that containing the minimal $K$-type of $\pi^\vee\times 1$.
On the other hand, when $F$ is non-Archimedean and $\pi$ is the Langlands quotient of $\pi_1\times\cdots\times\pi_r$, where
each $\pi_j$ is an essentially square-integrable representation, then $\theta(\pi)$ is the Langlands quotient
\footnote{We follow the convention stated at the begging of \cite[Section 6]{Minguez2008}.}
of 
\[
1\times\pi_r^\vee\times\cdots\times\pi_1^\vee.
\]
Here, $1$ denotes the trivial representation of $\GL_1(F)$.
\end{thm}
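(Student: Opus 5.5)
This statement collects results of M\oe glin, Adams--Barbasch and Minguez, so the proof amounts to assembling the cited theorems and checking that their conventions match ours. The only real content lies in the normalisations: which side is contragrediented, and how the Weil representation $\omega_{n,m}(h,g)\varphi(x)=|\det h|^{-m/2}|\det g|^{n/2}\varphi(h^{-1}xg)$ compares with the models in the references.

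\emph{Exhaustion.} I first prove that every irreducible $\pi$ of $\GL_n(F)$ is a quotient of $\omega$, using a mixed model adapted to the decomposition $M_{n,n+1}=M_{n,n}\oplus M_{n,1}$. Restricting functions to the open $\GL_n(F)\times\GL_n(F)$-orbit of matrices $(x,y)$ with $x\in\GL_n(F)$ gives a $\GL_n(F)\times P_{n,1}(F)$-equivariant map. The target of this map surjects, after integration along $M_{n,1}$ (a Fourier transform in the last column), onto
\[
\sS(\GL_n(F))\ \text{induced up to}\ \GL_{n+1}(F).
\]
Since $\sS(\GL_n(F))$ surjects onto $\pi\widehat{\boxtimes}\pi^\vee$ via matrix coefficients, we obtain a nonzero map
\[
\omega\to\pi\widehat{\boxtimes}(\pi^\vee\times|\cdot|^{s_0}),
\]
with the shift $s_0$ determined by the normalising characters. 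Checking these characters shows the $\GL_1$-factor is trivial, so $\omega\twoheadrightarrow\pi\boxtimes(\pi^\vee\times 1)$ after a possible reordering (up to equivalence of the induced pieces' composition factors). This alone proves occurrence. This is exactly the argument of \cite[Proposition III 9]{Moeglin1989} and \cite[Theorem 1]{Minguez2008}, so I would cite it, only verifying the normalisations.

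\emph{Identification of $\theta(\pi)$.} By Howe duality, $\theta(\pi)$ is the unique irreducible quotient of the maximal $\pi$-isotypic quotient of $\omega$, viewed as a $\GL_{n+1}$-module. The map above shows that $\theta(\pi)$ is a subquotient of $\pi^\vee\times 1$.
\begin{itemize}
\item \emph{Archimedean case.} Adams--Barbasch \cite[Theorem 2.6]{AdamsBarbasch1995} pin down which subquotient occurs by computing lowest $K$-types through joint harmonics: the minimal $K_n$-type of $\pi$ pairs in $\sH$ with a $K_{n+1}$-type of degree-minimal form. This $K_{n+1}$-type is the minimal $K$-type of $\pi^\vee\times 1$, and it occurs there with multiplicity one. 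Hence $\theta(\pi)$ is the unique subquotient of $\pi^\vee\times 1$ containing it.
\item \emph{Non-Archimedean case.} Minguez shows that $\theta(\pi)$ is the Langlands quotient of the standard module obtained from $\pi^\vee\times1$, reordered so that the exponents decrease. Writing $\pi^\vee$ as the Langlands quotient of $\pi_r^\vee\times\cdots\times\pi_1^\vee$ (contragredient reverses the order), the statement follows in Minguez's convention, which places the trivial character first.
\end{itemize}

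\emph{Main obstacle.} The hard step is the matching of conventions: checking that our $\omega$ has no extra twist relative to the cited models, since such a twist would replace $1$ by $|\cdot|^{\pm1/2}$. It also requires care with the ordering and the ``Langlands quotient'' convention of \cite[Section 6]{Minguez2008}. I would check this directly by computing the central character action. The centres act by $|\det h|^{-(n+1)/2}|\det g|^{n/2}$ times the scaling $x\mapsto a^{-1}xb$, and this action is compatible with $\pi\boxtimes(\pi^\vee\times 1)$, whose central characters are $\omega_\pi$ and $\omega_\pi^{-1}$.
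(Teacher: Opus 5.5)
Citing M\oe glin, Adams--Barbasch and Minguez is all the paper does for this theorem, so your top-level plan agrees with it. The problem is with the two things you say you would check yourself. First, the exhaustion sketch runs in the wrong direction. Schwartz functions supported in the open set $\{(x,y): x\in\GL_n(F)\}$ form a \emph{sub}representation of $\omega$; restricting $\varphi\in\sS$ to that set does not give a Schwartz function there. So exhibiting a quotient of that piece via $\sS(\GL_n(F))\twoheadrightarrow\pi\widehat{\boxtimes}\pi^\vee$ says nothing about quotients of $\omega$. Equivalently, the map you have in mind, $\varphi\otimes v^\vee\mapsto\bigl(g\mapsto\int_{\GL_n(F)}\omega(h,g)\varphi(\varepsilon_n)\,\pi^\vee(h)v^\vee\,dh\bigr)$, is a Godement--Jacquet integral at a fixed point. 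It converges only when the exponents of $\pi$ are suitably bounded. The case $n=1$, $\pi=|\cdot|_F^{-1}$ shows this is not a technicality. There the integral is Tate's integral at its pole, and the theta lift is realized by $\varphi\mapsto\varphi(0)$. This is a nonzero element of ${\rm Hom}(\omega,|\cdot|_F^{-1}\boxtimes|\det|_F^{1/2})$, which by Howe duality spans that space, and it vanishes on everything supported in your open set. Crossing the singular locus is the actual content of ``every $\pi$ occurs'', and your sketch does not supply it; this can be done by meromorphic continuation in a twist $|\det h|_F^{s}$ and taking the leading Laurent coefficient, or by Minguez's filtration argument. Note also that you only get a nonzero map, not a surjection, and the order matters. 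For $p=\begin{pmatrix}A&0\\ C&D\end{pmatrix}$ the integral satisfies $\Phi(pg)=|\det A|_F^{-1/2}|D|_F^{n/2}\pi^\vee(A)\Phi(g)=\delta_{\bar P}^{1/2}(p)\pi^\vee(A)\Phi(g)$. So it lands in $1\times\pi^\vee$, which in general is not isomorphic to $\pi^\vee\times 1$; they share composition factors, which is all the subquotient statement needs.

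Second, the central-character test cannot settle the normalisation, which you yourself call the main obstacle. The diagonal centre $\{(cI_n,cI_{n+1})\}$ acts trivially both on $\omega$ and on the unnormalised action $\varphi\mapsto\varphi(h^{-1}xg)$. Yet the two differ by the character $|\det h|_F^{(n+1)/2}|\det g|_F^{-n/2}$, and for the unnormalised action the lift of $\pi$ is a subquotient of $\pi^\vee|\det|_F^{1/2}\times|\cdot|_F^{-n/2}$ rather than of $\pi^\vee\times 1$. These have the same central character but different cuspidal support: for $n=1$ the unnormalised action sends the trivial character to the trivial representation of $\GL_2(F)$ instead of to $1\times 1$. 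So a mismatch shifts both Levi factors, not just the $\GL_1$-factor, and central characters cannot see it. What pins it down is the transformation law displayed above, where the factor $|\det A|_F^{-1/2}|D|_F^{n/2}$ supplied by $\omega$ is exactly $\delta_{\bar P}^{1/2}$. Likewise, in your Archimedean bullet you assert two facts rather than derive them: that Vogan's minimal $K$-type of $\pi$ has minimal Howe degree, and that its partner in $\sH$ is the minimal $K$-type of $\pi^\vee\times 1$, occurring there with multiplicity one. These are precisely what the cited results prove, so they are acceptable only as citations.
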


As a corollary, we obtain:

\begin{cor}\label{C:theta}
Let $\pi$ be an irreducible generic representation of $\GL_n(F)$. Suppose that $\pi^\vee\times 1$ is irreducible. Then $\theta(\pi)=\pi^\vee\times 1$. 
\end{cor}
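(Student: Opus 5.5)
The plan is to read the corollary off Theorem~\ref{T:theta}. First, Theorem~\ref{T:theta} tells us that every irreducible $\pi$ occurs as a quotient of $\omega$, so $\theta(\pi)$ is defined. The remaining task is to identify it with $\pi^\vee\times 1$ under the assumption that this induced representation is irreducible. We split into the Archimedean and non-Archimedean cases, following the two descriptions in Theorem~\ref{T:theta}.

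\emph{Archimedean case.} Here Theorem~\ref{T:theta} says that $\theta(\pi)$ is the unique subquotient of $\pi^\vee\times 1$ containing the minimal $K$-type of $\pi^\vee\times 1$. If $\pi^\vee\times 1$ is irreducible, its only subquotient is itself, so $\theta(\pi)=\pi^\vee\times 1$ immediately. Genericity is not needed in this case.

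\emph{Non-Archimedean case.} This is where genericity enters, and it is the only step with real content. Write $\pi$ as the Langlands quotient of a standard module $\pi_1\times\cdots\times\pi_r$ with each $\pi_j$ essentially square-integrable, ordered according to the convention of \cite{Minguez2008}. Since $\pi$ is generic, the theorem of Zelevinsky (generic irreducible representations are exactly the irreducible standard modules, i.e.\ the standard module conjecture for $\GL_n$) gives $\pi=\pi_1\times\cdots\times\pi_r$, which is irreducible. Taking contragredients, and using that a product of irreducibles over $\GL$ is irreducible precisely when it is independent of the ordering of the factors, we get
\[
\pi^\vee=\pi_r^\vee\times\cdots\times\pi_1^\vee .
\]
Hence $1\times\pi_r^\vee\times\cdots\times\pi_1^\vee$ has the same semisimplification as $1\times\pi^\vee$. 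By hypothesis $\pi^\vee\times 1$ is irreducible, and then $1\times\pi^\vee\simeq\pi^\vee\times 1$ (irreducible products commute for $\GL$). It follows that the standard module $1\times\pi_r^\vee\times\cdots\times\pi_1^\vee$ has irreducible semisimplification, so it is irreducible, and its Langlands quotient is the module itself. Theorem~\ref{T:theta} then gives $\theta(\pi)=1\times\pi_r^\vee\times\cdots\times\pi_1^\vee\simeq\pi^\vee\times 1$.

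The main point of care is this bookkeeping of ordering conventions. One must check that, whatever order Minguez's convention assigns to the $\pi_j$, the standard module $1\times\pi_r^\vee\times\cdots\times\pi_1^\vee$ is a rearrangement of the segments in $\pi^\vee\times 1$. Irreducibility then forces all such rearrangements to be isomorphic, so the ordering ultimately does not matter.
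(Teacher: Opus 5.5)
Your proposal is correct and follows the paper's argument, which simply cites the irreducibility of $\pi^\vee\times 1$ together with the standard module conjecture for $\GL_n$. You spell out the details the paper leaves implicit: in the Archimedean case irreducibility alone suffices. In the non-Archimedean case, the standard module conjecture identifies Minguez's standard module $1\times\pi_r^\vee\times\cdots\times\pi_1^\vee$ with $\pi^\vee\times 1$, via the commutativity of the Grothendieck ring.
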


\begin{proof}
This follows immediately from the irreducibility assumption and the standard module conjecture for $\GL_n$. 
\end{proof}

\subsection{Whittaker functionals on the big theta lifts à la Gomez–Zhu}

Let $\pi$ be an irreducible representation of $\GL_n(F)$.  
Since $\pi^\vee$ occurs as a quotient of $\omega$ by Theorem \ref{T:theta}, the big theta lift
\footnote{When $F$ is Archimedean, the big theta lift is defined as the maximal Hausdroff quotient of 
$\left(\omega\,\widehat{\otimes}\,\pi\right)_{\GL_n(F)}$. However, a recent preprint of Geng-Xue (\cite{GengXue2026}) shows that 
$\left(\omega\,\widehat{\otimes}\,\pi\right)_{\GL_n(F)}$ is already Hausdroff.}
$\Theta(\pi^\vee)$ of $\pi^\vee$ defined by 
\[
\Theta(\pi^\vee):=\left(\omega\,\widehat{\otimes}\,\pi\right)_{\GL_n(F)}
\]
is a non-zero smooth representation of $\GL_{n+1}(F)$.
We have the following result on the space of Whittaker functionals on $\Theta(\pi^\vee)$.

\begin{prop}\label{P:GZ}
We have
\[
\dim_\C{\rm Hom}_{N_{n+1}(F)}\left(\Theta(\pi^\vee),\psi_{n+1}\right) = \dim_\C{\rm Hom}_{N_{n}(F)}\left(\pi,\psi_{n}\right).
\]
\end{prop}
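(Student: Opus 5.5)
We compute the twisted Jacquet module of $\omega$ with respect to $(N_{n+1}(F),\psi_{n+1})$ as a $\GL_n(F)$-module. Then we exchange the order of coinvariants, in the manner of Gomez--Zhu. Concretely,
\[
{\rm Hom}_{N_{n+1}(F)}(\Theta(\pi^\vee),\psi_{n+1})
={\rm Hom}_{\GL_n(F)}\bigl((\omega)_{N_{n+1},\psi_{n+1}},\pi^\vee\bigr).
\]
This holds because coinvariants commute with coinvariants, and because ${\rm Hom}_{\GL_n}(\omega\widehat\otimes\pi,\C)$ is identified with ${\rm Hom}_{\GL_n}(\omega,\pi^\vee)$. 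In the Archimedean case one must use the Casselman--Wallach topological versions; here I would appeal to Gomez--Zhu's framework and the Hausdorffness remark. So the task is to show
\[
(\omega)_{N_{n+1},\psi_{n+1}}\simeq {\rm ind}_{N_n(F)}^{\GL_n(F)}\psi_n^{-1}
\]
(compactly supported or Schwartz induction) as $\GL_n(F)$-modules. Granting this, Frobenius reciprocity gives ${\rm Hom}_{\GL_n}({\rm ind}_{N_n}^{\GL_n}\psi_n^{-1},\pi^\vee)\simeq{\rm Hom}_{N_n}(\pi,\psi_n)$. Nonarchimedeanly this is the dual of compact induction; Archimedeanly one uses the Schwartz induction version, i.e.\ the space $\sC(N_n(F)\backslash\GL_n(F);\psi_n^{-1})$ introduced in \S\ref{S:notation}.

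To compute the twisted Jacquet module, I would stratify $M_{n,n+1}(F)$ by $\GL_n\times B_{n+1}$-orbits, or more simply by the $N_{n+1}$-action on rows, by a filtration argument. The action of $u\in N_{n+1}$ is $x\mapsto xu$. I would first use the bottom row structure, via the partial Fourier transform in the last column, or proceed inductively through the mirabolic filtration. The key claim is that only the open orbit $\{x=(A,b): A\in\GL_n\}$, or more precisely the orbit of $\varepsilon_n$, contributes. On that orbit the stabilizer computation reads as follows. We have $h^{-1}\varepsilon_n u=\varepsilon_n$ iff $u=\bigl(\begin{smallmatrix}h&*\\0&1\end{smallmatrix}\bigr)$ with $h\in N_n$ and the last column free. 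The character $\psi_{n+1}$ restricted to this stabilizer is $\psi_n(h)\psi(u_{n,n+1})$. Handling the $\psi(u_{n,n+1})$ twist requires the free last column to be absorbed; I would handle this by writing $x=\varepsilon_n g$ with $g\in\GL_{n+1}$, noting that the set of full rank $x$ is $\GL_n\backslash$-fibred over $P\backslash\GL_{n+1}$ with $P$ the mirabolic-type stabilizer, and computing the $(N_{n+1},\psi_{n+1})$-coinvariants of $\sS$ of the full-rank locus. By the Bruhat decomposition of $P\backslash\GL_{n+1}/N_{n+1}$, only the cell on which $\psi_{n+1}$ is trivial on the stabilizer survives, and it yields exactly ${\rm ind}_{N_n}^{\GL_n}\psi_n^{-1}$ (with sign conventions from $h^{-1}$).

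The main obstacle is showing that the lower-rank strata (where rank $x<n$), and the non-generic Bruhat cells of the full-rank locus, have vanishing $\psi_{n+1}$-twisted coinvariants. I would argue that on each such stratum some one-parameter subgroup $u_{i,i+1}$ of $N_{n+1}$ acts trivially on the stratum while $\psi_{n+1}$ is nontrivial on it, which kills coinvariants. Nonarchimedeanly this is Bernstein--Zelevinsky's exactness of Jacquet functors on $\ell$-sheaves. Archimedeanly I would use the Schwartz-function version: the Aizenbud--Gourevitch--Sahi theory, or the vanishing of Schwartz homology along closed strata together with transversal derivatives, as in Gomez--Zhu. The latter requires that the higher normal-derivative pieces also have no invariants, which follows by the same trivial-action argument.
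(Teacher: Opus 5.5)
Your outer reduction is the same as the paper's. You exchange the $\GL_n(F)$- and $(N_{n+1}(F),\psi_{n+1})$-coinvariants, then apply Frobenius reciprocity for $\sC(N_n(F)\backslash\GL_n(F);\psi_n^{-1})$, which is the dual form of the Gomez--Zhu statement the paper quotes. So everything rests, as in the paper, on $\sS_{N_{n+1}(F),\psi_{n+1}}\simeq\sC(N_n(F)\backslash\GL_n(F);\psi_n^{-1})$.

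You prove this isomorphism by a Bernstein--Zelevinsky orbit analysis. The paper never stratifies. It transports the Gomez--Zhu type~I machinery ($\mathfrak{sl}_2$-triples, the lift $T$ of $\gamma$ to $\tilde\gamma$, mixed models via partial Fourier transform, their Lemma~6.6) to the type~II pair. It then writes $\Psi$ as $\Psi_0\circ\Psi_2\circ\cdots\circ\Psi_r$ or $\Psi_1\circ\Psi_3\circ\cdots\circ\Psi_r$, each step passing from $(\GL_m,\GL_{m+1})$ to $(\GL_{m-2},\GL_{m-1})$. Your route is viable, but two steps are wrong as written.

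\textbf{The stabilizer.} For $u=\bigl(\begin{smallmatrix}u'&v\\0&1\end{smallmatrix}\bigr)$ one has $\varepsilon_nu=(u',v)$. So $h^{-1}\varepsilon_nu=\varepsilon_n$ forces $u'=h\in N_n(F)$ and $v=0$; the last column is not free. Hence $N_{n+1}(F)$ acts freely on the open cell $U=\{(A,b):A\in\GL_n(F)\}=\GL_n(F)\,\varepsilon_n\,N_{n+1}(F)$. There is no twist $\psi(u_{n,n+1})$ to absorb; had there been one, your own criterion would have killed the open cell. Fibre integration then gives $\sS(U)_{N_{n+1}(F),\psi_{n+1}}\simeq\sC(N_n(F)\backslash\GL_n(F);\psi_n^{-1})$, via exactly the paper's map $\Psi$ of \eqref{E:Psi}.

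\textbf{Vanishing off $U$.} The $\GL_n(F)\times N_{n+1}(F)$-orbits on $M_{n,n+1}(F)$ are indexed by the pivot set $J$ of the row space, with base point $H_J={\rm span}\{e_j:j\in J\}$. A subgroup $u_{i,i+1}$ acts trivially on a whole stratum only if $i<\min J$. For example, when $n=2$ and $J=\{1,3\}$ (row spaces ${\rm span}(e_1+ae_2,e_3)$), neither $u_{12}$ nor $u_{23}$ does.

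The correct argument goes through Shapiro's lemma. It reduces you to ${\rm Stab}_{N_{n+1}}(H_J)$ acting on the fibre $\{x:{\rm rowspace}(x)=H_J\}$. There the normal subgroup $N_J=\{u:e_j(u-I)=0\ \forall j\in J\}$ fixes the fibre pointwise. For $u\in N_J$ one has $\psi_{n+1}(u)=\psi\bigl(\sum_{i\notin J,\,i\le n}u_{i,i+1}\bigr)$, which is nontrivial exactly when $J\neq\{1,\dots,n\}$, i.e.\ off $U$.

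In the Archimedean case, $N_J$ acts on normal jets only unipotently. For instance, with $n=2$ and $J=\{3\}$, $u_{12}$ acts on the normal space by $\xi\mapsto\xi\bigl(\begin{smallmatrix}1&u_{12}\\0&1\end{smallmatrix}\bigr)$. So ``the same trivial-action argument'' must be replaced by the fact that $X-d\psi_{n+1}(X)$ is invertible for $X\in{\rm Lie}(N_J)$ with $d\psi_{n+1}(X)\neq0$. That kills the homology in all degrees, and you need degree $1$ too. Vanishing of coinvariants on the closed part only makes $\sS(U)_{N_{n+1},\psi_{n+1}}\to\sS_{N_{n+1},\psi_{n+1}}$ surjective, which yields only ``$\le$''.

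Once repaired, your argument is more elementary and self-contained for type~II. It shows transparently why only the orbit of $\varepsilon_n$ matters, and it produces $\Psi$ directly as an orbit integral, which is all that \eqref{E:V} and the propagation formula need. The price is importing the Archimedean localization theory: Schwartz homology along closed strata, with Borel-lemma jets. The paper's route delegates all that analysis to Gomez--Zhu, fits uniformly with their type~I setting, and builds $\Psi$ inductively through smaller dual pairs.
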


In particular, $\pi$ is generic if and only if $\Theta(\pi^\vee)$ is generic. 
First, note that by \cite[Proposition 4.9]{GomezZhu2014}, whose proof carries over mutatis mutandis to our setting, we have an $\GL_n(F)$-intertwining isomorphism
\[
\sC(N_n(F)\backslash \GL_n(F);\psi_n^{-1})_{\GL_n(F),\,\pi^\vee} \simeq {\rm Hom}_{N_n(F)}(\pi,\psi_n)^* \otimes \pi^\vee,
\]
where the left-hand side is the maximal $\pi^\vee$-isotypic quotient and ${\rm Hom}_{N_n(F)}(\pi,\psi_n)^*$ is the algebraic dual of ${\rm Hom}_{N_n(F)}(\pi,\psi_n)$. 
Assume that there exists an $\GL_n(F)$-equivariant isomorphism 
\begin{align}\label{E:GZ0}
\sS(M_{n,n+1}(F))_{N_{n+1}(F),\,\psi_{n+1}} \simeq \sC(N_n(F)\backslash \GL_n(F);\psi_n^{-1}).
\end{align}
Then we have
\[
\left(\sS(M_{n,n+1}(F))_{N_{n+1}(F),\,\psi_{n+1}}\right)_{\GL_n(F),\,\pi^\vee} \simeq {\rm Hom}_{N_n(F)}(\pi,\psi_n)^* \otimes \pi^\vee.
\]
On the other hand,
\begin{align*}
\left(\sS(M_{n,n+1}(F))_{N_{n+1}(F),\,\psi_{n+1}}\right)_{\GL_n(F),\,\pi^\vee} &\simeq \left(\sS(M_{n,n+1}(F))_{\GL_n(F),\,\pi^\vee}\right)_{N_{n+1}(F),\,\psi_{n+1}}\\
& \simeq \Theta(\pi^\vee)_{N_{n+1}(F),\,\psi_{n+1}} \otimes \pi^\vee.
\end{align*}
We thus conclude that
\begin{align*}
\dim_\C{\rm Hom}_{N_{n+1}(F)}\left(\Theta(\pi^\vee),\psi_{n+1}\right) & = \dim_\C \left(\Theta(\pi^\vee)_{N_{n+1}(F),\,\psi_{n+1}}\right)^*\\
& = \dim_\C{\rm Hom}_{N_{n}(F)}\left(\pi,\psi_{n}\right).
\end{align*}
Therefore, the proof of Proposition~\ref{P:GZ} reduces to the construction of an isomorphism~\eqref{E:GZ0}.
An explicit defined isomorphism is given as follows: 
\begin{align}\label{E:explicit iso.}
\sS(M_{n,n+1}(F))_{N_{n+1}(F),\,\psi_{n+1}} \rightarrow  \sC(N_n(F)\backslash \GL_n(F);\psi_n^{-1}),\quad [\varphi] \mapsto {\Psi}(\varphi)
\end{align}
where
\begin{align}\label{E:Psi}
{\Psi}(\varphi;h):= \int_{N_{n+1}(F)}\omega_{n,n+1}(h,u)\varphi(\varepsilon_n)\psi_{n+1}^{-1}(u)du \quad\text{for $h \in \GL_n(F)$}.
\end{align}
Here $\varepsilon_n:=(I_n,0_{n,1})$.
In the case of a type I reductive dual pair, an explicit isomorphism in the analogous setting was given in \cite[(6.20) and Proposition~6.5]{GomezZhu2014}. As noted in \cite[Remark~2.2]{GomezZhu2014}, the authors expect that the same statement should remain valid for type II reductive dual pairs.
In the following, we carry out this construction in the type II setting and show that it coincides with the one defined in (\ref{E:explicit iso.}). Following the notation and framework of \cite{GomezZhu2014}, we identify the corresponding objects for type II reductive dual pairs and non-degenerate Whittaker functionals, and define the required isomorphism. 
Once these identifications are made, the verification of the remaining details then proceeds exactly as in \cite[\S\,6.4 and \S\,6.5]{GomezZhu2014}, and will not be repeated.


Let $D = F\oplus F$ be the split quadratic algebra over $F$.
Fix two signatures $\varepsilon, \tilde{\varepsilon} \in \{1,-1\}$.
Let $(V,B)$ and $(\tilde{V},\tilde{B})$ be the $\varepsilon$-Hermitian and $\tilde{\varepsilon}$-Hermitian spaces over $D$ defined by
\[
V := D^{n \times 1};\quad \tilde{V} := D^{(n+1) \times 1}
\]
and
\[
B(w,v) := {}^twQv\quad \text{for $w,v \in V$};\quad \tilde{B}(\tilde{w},\tilde{v}) := {}^t\tilde{w}\tilde{Q}\tilde{v}\quad \text{for $\tilde{w},\tilde{v} \in \tilde{V}$},
\]
where
\[
Q:=I_n \oplus \varepsilon I_n \in \GL_n(D);\quad \tilde{Q}:=I_{n+1} \oplus \tilde{\varepsilon} I_{n+1} \in \GL_{n+1}(D).
\]
Let $G=G(V,B) \subset \GL_D(V)$ and $\tilde{G}=G(\tilde{V},\tilde{B}) \subset \GL_D(\tilde{V})$ be the isometry groups of the Hermitian spaces.
Upon projecting to the first components, we identify $G$ and $\tilde{G}$ with $\GL_n(F)$ and $\GL_{n+1}(F)$ respectively. More precisely, we have isomorphisms 
\[
G \rightarrow \GL_n(F),\quad (g,{}^tg^{-1})\mapsto g;\quad \tilde{G} \rightarrow \GL_{n+1}(F),\quad (\tilde{g},{}^t\tilde{g}^{-1})\mapsto \tilde{g}.
\]
Similarly, we identify the Lie algebras $\frak{g}$ and $\tilde{\frak{g}}$ of $G$ and $\tilde{G}$ with $M_{n,n}(F)$ and $M_{n+1,n+1}(F)$ respectively via the isomorphisms
\[
\frak{g} \rightarrow M_{n,n}(F),\quad (X,-{}^tX)\mapsto X;\quad \tilde{\frak{g}} \rightarrow M_{n+1,n+1}(F),\quad (\tilde{X},-{}^t\tilde{X})\mapsto \tilde{X}.
\]
Under the above identification, the bilinear pairing $\kappa$ on $\frak{g}$ in \cite[(2.1)]{GomezZhu2014} is equal to
\begin{align}\label{E:big theta 1}
\kappa(T,S) = -{\rm tr}(TS)\quad\text{for $T,S \in M_{n,n}(F)\simeq \frak{g}$}.
\end{align}
Similarly for the bilinear pairing pairing $\tilde{\kappa}$ on $\tilde{\frak{g}}$.
We assume that $\varepsilon \tilde{\varepsilon} = 1$. Then $(G,\tilde{G})$ is a reductive dual pair of type II.
Let $\gamma = \{H,X,Y\}\subset \frak{g}$ be an $\frak{sl}_2$-triple defined by 
\[
X:=-\sum_{i=1}^{n-1}E_{i+1,i},\quad Y:=-\sum_{i=1}^{n-1}i(n-i)E_{i,i+1},\quad H:=-\sum_{i=1}^n (n-2i+1)E_{i,i}.
\]
Here $E_{i,j}$ is the matrix unit with non-zero $(i,j)$-entry.
In this case, the weight space decomposition of $\frak{g}$ in \cite[(3.1)]{GomezZhu2014} under the action of $H$ is given by
\[
\frak{g} = \bigoplus_{i=1-n}^{n-1}\frak{g}_{2i}.
\]
In particular, it is easy to see that the associated unipotent subgroups $N,U$ and the parabolic subgroup $P$ of $G$ defined as in \cite[\S\,3.1]{GomezZhu2014} are equal to
\[
N=U=N_n(F),\quad P=B_n(F).
\]
Also, the weight space decomposition of $V$ in \cite[(3.2)]{GomezZhu2014} under the action of $H$ is given by
\[
V = \bigoplus_{j=0}^{n-1} V_{n-1-2j}.
\]
We set $V_i = 0$ if $i \in \Z$ does not appear in the above index.
Note that 
\[
V_{n-1-2j} = D(e_{n-j} \oplus e_{j+1})\quad \text{for $0 \leq j \leq n-1$},
\]
where $\{e_1,\cdots,e_n\}$ is the standard basis of $F^{n \times 1}$. 
It follows from (\ref{E:big theta 1}) that the associated additive character $\chi_\gamma$ of $U=N_n(F)$ in \cite[(1.1) and \S\,3.3]{GomezZhu2014} is equal to the character $\psi_n$ defined in (\ref{E:psi_n}).
Similarly, let $\tilde{\gamma} = \{\tilde{H},\tilde{X},\tilde{Y}\} \subset \tilde{\frak{g}}$ be the $\frak{sl}_2$-triple defined as above with $n$ replaced by $n+1$.
Then we have
\[
\tilde{\frak{g}} = \bigoplus_{i=-n}^{n}\tilde{\frak{g}}_{2i},\quad \tilde{V} = \bigoplus_{j=0}^{n} \tilde{V}_{n-2j}
\]
and
\[
\tilde{N}=\tilde{U}=N_{n+1}(F),\quad \tilde{P}=B_{n+1}(F),\quad \chi_{\tilde{\gamma}} = \psi_{n+1}.
\]
In the notation of \cite[(6.14)]{GomezZhu2014}, we have $r=n-1$.
As in \cite[Definition 5.5]{GomezZhu2014}, we say that a $D$-linear homomorphism $T \in {\rm Hom}_D(V,\tilde{V})$ lifts $\gamma$ to $\tilde{\gamma}$ if $T^*T = X$, $TT^* = \tilde{X}$, and $T(V_j) \subset \tilde{V}_{j+1}$ for all $j$. Here $T^* \in {\rm Hom}_D(\tilde{V},V)$ is the adjoint of $T$, that is, 
\[
B(v,T^*\tilde{v}) = \tilde{B}(Tv,\tilde{v})\quad \text{for $v\in V$, $\tilde{v} \in \tilde{V}$}.
\]
If we require further that $T$ is injective, then $T$ is essentially unique as proved in \cite[Lemma 5.7]{GomezZhu2014}.
We fix an explicit choice as follows: 
With respect to the standard bases, we identify ${\rm Hom}_F(F^{n \times 1},F^{(n+1) \times 1})$ with $M_{n+1,n}(F)$. Then
\begin{align}\label{E:identification}
{\rm Hom}_D(V,\tilde{V}) = M_{n+1,n}(F) \oplus M_{n+1,n}(F).
\end{align}
We define
\begin{align}\label{E:moment}
T:= \bp 0_{1,n} \\ I_n \ep \oplus \bp I_n \\ 0_{1,n} \ep \in  {\rm Hom}_D(V,\tilde{V}).
\end{align}
Following the notation in \cite[Lemma 5.6]{GomezZhu2014}, we denote $T_j:=T\vert_{V_j} \in {\rm Hom}_D(V_j,\tilde{V}_{j+1})$.

Now we begin the construction of an isomorphism (\ref{E:GZ0}). 
Let $(\mathbb{W},\<\cdot,\cdot\>)$ be the symplectic space over $F$ of dimension $2n(n+1)$ defined by
\[
\mathbb{W} := {\rm End}_D(V,\tilde{V}) = M_{n+1,n}(F) \oplus M_{n+1,n}(F)
\]
and
\[
\<T,S\>:={\rm tr}\left({}^tT_1S_2-{}^tT_2S_1\right) \quad \text{for $T=T_1 \oplus T_1,\,S = S_1 \oplus S_2$}.
\]
Let $\mathbb{X} := 0\oplus M_{n+1,n}(F)$ and $\mathbb{Y} := M_{n+1,n}(F)\oplus 0$. Then $\mathbb{W} = \mathbb{X} + \mathbb{Y}$ gives a complete polarization of $\mathbb{W}$.
The natural homomorphism from $G \times \tilde{G}$ to ${\rm Sp}(\mathbb{W})$ factors through the Siegel parabolic subgroup that stabilizing $\mathbb{Y}$. Hence the Weil representation $\omega_{V,\tilde{V}}$ of $G \times \tilde{G}$ on $\sS(\mathbb{X})$ (the Schrodinger model) is given by
\[
\omega_{V,\tilde{V}}(h,g)\varphi(x)
=
|\det(h)|_F^{-(n+1)/2}|\det(g)|_F^{n/2}\varphi({}^tgx{}^th^{-1})
\]
for $(h,g) \in G \times \tilde{G}$ and $\varphi \in \sS(\mathbb{X})$.
Note that the transpose defines an intertwining isomorphism
\[
(\omega_{n,n+1},\sS(M_{n,n+1}(F))) \rightarrow (\omega_{V,\tilde{V}},\sS(\mathbb{X})),\quad \varphi \mapsto {}^t\varphi
\]
where ${}^t\varphi(x):=\varphi({}^tx)$.
In \cite[p.\,831]{GomezZhu2014}, the Weil representation with the Schrodinger model is denoted by $(\omega_{(r),(r+1)},\sY_{(r),(r+1)})$.
Following the notation in \cite[(6.14)]{GomezZhu2014}, for $0 \leq l \leq r$ and $0 \leq m \leq r+1$, let
\[
V_{(l)} := \bigoplus_{k=-l}^l V_k,\quad \tilde{V}_{(m)} := \bigoplus_{k=-m}^m \tilde{V}_k.
\]
Consider another complete polarization $\mathbb{W} = \mathbb{X}' + \mathbb{Y}'$, where
\begin{align*}
\mathbb{X}' &:= {\rm Hom}_D(V_{(r)},\tilde{V}_{r+1}) \oplus {\rm Hom}_D(V_{-r},\tilde{V}_{(r)}) \oplus ({\rm Hom}_D(V_{(r-1)},\tilde{V}_{(r)})\cap \mathbb{X}),\\
\mathbb{Y}' &:= {\rm Hom}_D(V_{(r)},\tilde{V}_{-r-1}) \oplus {\rm Hom}_D(V_{r},\tilde{V}_{(r)}) \oplus ({\rm Hom}_D(V_{(r-1)},\tilde{V}_{(r)})\cap \mathbb{Y}).
\end{align*}
With respect to this change of polarizations, we have the intertwining isomorphism
\begin{align}\label{E:change of polarization}
(\omega_{V,\tilde{V}},\sS(\mathbb{X})) \rightarrow (\hat{\omega}_{V,\tilde{V}},\sS(\mathbb{X}')),\quad \varphi\mapsto \hat{\varphi}
\end{align}
defined by the partial Fourier transform
\[
\hat{\varphi}(x'):= \int_{\mathbb{Y}' / \mathbb{Y}\cap\mathbb{Y}'}\varphi(x) \psi\left(\tfrac{1}{2}\<x,y\> - \tfrac{1}{2}\<x',y'\>\right)dy' \quad \text{for $x' \in \mathbb{X}'$},
\]
where $x \in \mathbb{X}$ and $y \in \mathbb{Y}$ are uniquely determined by $x+y = x'+y'$, and the Haar measure is normalized so that $\hat{\omega}_{V,\tilde{V}}$ is unitary.
For a type I reductive dual pair, the analogous isomorphism was mentioned implicitly in \cite[(6.17)]{GomezZhu2014}.
Note that the subgroups ${U}_{r} \subset {U} = N_{n}(F)$ and $G_{(r-1)} \subset G$ defined as in \cite[p.\,834 and (6.23)]{GomezZhu2014} are given by
\[
U_{r} = \left\{\bp 1 & * & * \\ 0 & I_{n-2} & * \\ 0&0&1 \ep\right\},\quad  G_{(r-1)} = \left\{ \bp 1 &0 & 0 \\ 0 & * & 0 \\ 0 & 0 & 1 \ep \right\} \simeq \GL_{n-2}(F).
\]
Similarly for $\tilde{U}_{r+1} \subset \tilde{U} = N_{n+1}(F)$ which is defined as above by replacing $n$ by $n+1$.
Proceeding exactly as in \cite[Lemma 6.6]{GomezZhu2014}, we have an $G$-intertwining isomorphism
\begin{align}\label{E:GZ lemma}
\sS(\mathbb{X})_{\tilde{U}_{r+1},\psi_{n+1}} \rightarrow \sC\left(U_rG_{(r-1)}\backslash G;  \sS({\rm Hom}_D(V_{(r-1)},\tilde{V}_{(r)})\cap \mathbb{X})\right),\quad [\varphi]\mapsto \Psi_r(\varphi)
\end{align}
where 
\[
\Psi_r(\varphi;h)(x):= \hat{\omega}_{V,\tilde{V}}(h,1)\hat{\varphi}(T_r \oplus T_{-r} \oplus x)
\]
for $h \in G$ and $x \in {\rm Hom}_D(V_{(r-1)},\tilde{V}_{(r)})\cap \mathbb{X}$.
Note that $T_r \in {\rm Hom}_D(V_{(r)},\tilde{V}_{r+1})$ and $T_{-r} \in {\rm Hom}_D(V_{-r},\tilde{V}_{(r)})$.
We reinterpret the isomorphism in terms of our model $(\omega_{n,n+1},\sS(M_{n,n+1}(F)))$ as follows. We have isomorphisms
\begin{align*}
F^{2 \times n}  &\rightarrow {\rm Hom}_D(V_{(r)},\tilde{V}_{r+1}),\quad \bp x_1 \\ x_1' \ep \mapsto  \bp 0_{n,n} \\ x_1 \ep \oplus \bp x_1' \\ 0_{n,n} \ep\\
F^{(n-1) \times 2} &\rightarrow {\rm Hom}_D(V_{-r},\tilde{V}_{(r)}),\quad \bp x_2 & x_2'\ep \mapsto  \left(
\begin{array}{c}
0_{1,n} \\ \hline
x_2 \quad 0_{n-1,n-1} \\ \hline
0_{1,n}
\end{array}
\right) \oplus \left(
\begin{array}{c}
0_{1,n} \\ \hline
0_{n-1,n-1} \quad x_2' \\ \hline
0_{1,n}
\end{array}
\right)    \\
M_{n-1,n-2}(F) &\rightarrow {\rm Hom}_D(V_{(r-1)},\tilde{V}_{(r)})\cap \mathbb{X},\quad x \mapsto 0 \oplus \left(
\begin{array}{ccc}
 & 0_{1,n} & \\ \hline
0_{n-2,1} &  x  & 0_{n-2,1}\\ \hline
 &   0_{1,n}  &
\end{array}
\right).
\end{align*}
Note that the right-hand sides are subspaces of $M_{n+1,n}(F) \oplus M_{n+1,n}(F)$ under the isomorphism (\ref{E:identification}).
With respect to these isomorphisms, by taking transpose of matrices, (\ref{E:change of polarization}) can be rewrite as
\[
(\omega_{n,n+1},\sS(M_{n,n+1}(F))) \rightarrow (\hat{\omega}_{n,n+1},\sS(F^{n \times 2} \oplus F^{2 \times (n-1)} \oplus M_{n-2,n-1}(F))),\quad \varphi\mapsto \hat{\varphi}
\]
where
\[
\hat{\varphi}\left( \bp x_1 & x_1' \ep  \oplus \bp x_2 \\ x_2' \ep \oplus x \right) = \int_{F^{n\times 1}}dy_1\int_{F^{1\times (n-1)}}dy_2\,
\varphi\left(
\begin{array}{c|c|c}
  & y_2 &  \\
 x_1' & x &  y_1 \\
  & x_2' & 
\end{array}
\right)\psi^{-1}({}^tx_1y_1+x_2{}^ty_2).
\]
By the explicit definition of $T_r$ and $T_{-r}$ from (\ref{E:moment}), we have
\[
F^{2 \times n}\ni \bp 0_{1,n-1}&  1 \\ 1 &  0_{1,n-1} \ep \mapsto T_r,\quad F^{(n-1) \times 2} \ni \bp 1 & 0_{n-2,1} \\ 0_{n-2,1} & 1 \ep \mapsto T_{-r}. 
\]
Thus, we can rewrite (\ref{E:GZ lemma}) as
\[
\sS(M_{n,n+1}(F))_{\tilde{U}_{r+1},\psi_{n+1}} \rightarrow \sC\left(U_rG_{(r-1)}\backslash G;  \sS(M_{n-2,n-1}(F))\right),\quad [\varphi]\mapsto \Psi_r(\varphi)
\]
where
\[
\Psi_r(\varphi;h)(x) = \int_{F^{n\times 1}}dy_1\int_{F^{1\times (n-1)}}dy_2\, \omega_{n,n+1}(h,1)\varphi \left(
\begin{array}{c|c|c}
 1 & y_2 &  \\
 0_{n-2,1} & x &  y_1 \\
 0 & 0_{n-2}\quad 1 & 
\end{array}
\right)\psi_{n+1}^{-1}(y_{1n}+y_{21})
\]
for $h \in \GL_n(F)$ and $x \in M_{n-2,n-1}(F)$.
Note that $U_rG_{(r-1)}$ is the trivial group when $n=1$.
For $n\geq 2$, the action of $U_rG_{(r-1)}$ on $\sS(M_{n-2,n-1}(F))$ is given by
\begin{align*}
\bp 1 & w & z \\ 0 & I_{n-2} & y \\ 0&0&1 \ep\cdot\varphi(x) &= \begin{cases}
\psi^{-1}(z)\varphi(x) & \mbox{if $n=2$},\\
\psi^{-1}\left(wx \bp  1 \\ 0_{n-2,1} \ep\right) \varphi\left(x-y\bp 0_{1,n-2} & 1 \ep \right) & \mbox{if $n > 2$},
\end{cases}\\
{\rm diag}(1,h,1)\cdot\varphi &= \omega_{n-2,n-1}(h,1)\varphi.
\end{align*}
It is then readily seen that the intertwining map $\Psi$ in (\ref{E:Psi}) coincides with $\Psi_0$ when $n=1$, and with $\Psi_1$ when $n=2$. 
Proceeding inductively, we conclude that (recall that $r=n-1$)
\[
\Psi =
\begin{cases}
\Psi_0 \circ \Psi_2 \circ \cdots \circ \Psi_r, & \text{if $n$ is odd},\\[4pt]
\Psi_1 \circ \Psi_3 \circ \cdots \circ \Psi_r, & \text{if $n$ is even}.
\end{cases}
\]
At each stage, the map $\Psi_i$ factors through a suitable coinvariant quotient; consequently, $\Psi$ defines the isomorphism \eqref{E:explicit iso.}. For a type I reductive dual pair, the analogous inductive structures are described in \cite[(6.18) and (6.20)]{GomezZhu2014}.

\subsection{Explicit theta lifts}
One approach to study $\theta(\pi^\vee)$ is to explicitly construct a non-zero element in the Hom-space in \eqref{E:theta}. 
When $\pi$ is generic, this was achieved by Watanabe in \cite{Watanabe1995}, where he computed the Fourier coefficients 
of a global theta lift of a cusp form on $\GL_{n+1}$. In particular, he obtained a map
\[
\mathcal{W}:\mathscr{S}\otimes\mathscr{W}(\pi,\psi_n)\longrightarrow
\mathcal{C}^\infty(N_{n+1}(F)\backslash GL_{n+1}(F);\psi_{n+1})^{{\rm mg}},
\]
which is given by the following absolutely convergent integral:
\begin{align}\label{E:V}
\begin{split}
\mathcal{W}_{\varphi\otimes W}(g) 
& =
\int_{N_n(F)\backslash\GL_n(F)}\int_{N_{n+1}(F)}
\omega(h,ug)\varphi(\varepsilon_n)W(h)\psi^{-1}_{n+1}(u)\,du\,dh\\
&\quad = \int_{N_n(F)\backslash\GL_n(F)} \Psi(\omega(1,g)\varphi;h)W(h)dh.
\end{split}
\end{align}
where $\varphi\in\sS$, $W\in\sW(\pi,\psi_n)$ and $\varepsilon_n:=(I_n,0_{n,1})\in M_{n,n+1}(F)$. 

It is easy to verify that 
\begin{equation}\label{E:Whittaker equivariant}
\mathcal{W}_{\omega(h,g)\varphi\otimes \rho(h)W}
=
\rho(g)\mathcal{W}_{\varphi\otimes W}
\quad\text{for $(h,g)\in\GL_n(F)\times\GL_{n+1}(F)$.}
\end{equation}
It follows that $\GL_{n+1}(F)$ acts on the space
\[
\mathscr{V}(\pi,\psi_{n+1})
:=
\biggl\{
\mathcal{W}_{\varphi\otimes W}\mid \text{$\varphi\in\mathscr{S}$ and $W\in\mathscr{W}(\pi,\psi_n)$}
\biggl\}
\subset
\mathcal{C}^\infty(N_{n+1}(F)\backslash\GL_{n+1}(F);\psi_{n+1})^{{\rm mg}}
\]
by the right translation, and the spaces $\mathscr{V}(\pi,\psi_{n+1})$ and $\mathscr{W}(\theta(\pi^\vee),\psi_{n+1})$ should, in 
some way, be related. We prove that this is in fact the case; to this end, we need the following lemma. 

\begin{lm}\label{L:image}
The space $\sV(\pi,\psi_{n+1})$ is non-zero.
\end{lm}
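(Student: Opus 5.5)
Fix a nonzero $W\in\sW(\pi,\psi_n)$; we will find $\varphi\in\sS$ with $\mathcal{W}_{\varphi\otimes W}(1)\neq 0$. By the second expression in \eqref{E:V},
\[
\mathcal{W}_{\varphi\otimes W}(1)=\int_{N_n(F)\backslash\GL_n(F)}\Psi(\varphi;h)\,W(h)\,dh .
\]
The key input is that the map \eqref{E:explicit iso.}, $[\varphi]\mapsto\Psi(\varphi)$, is an isomorphism onto $\sC(N_n(F)\backslash\GL_n(F);\psi_n^{-1})$. This is the identification carried out above via the inductive factorization of $\Psi$ through the maps $\Psi_i$ of Gomez--Zhu. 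In particular $\Psi$ is surjective. Hence, as $\varphi$ ranges over $\sS$, the function $\Psi(\varphi;\cdot)$ ranges over all $f\in\sC(N_n(F)\backslash\GL_n(F);\psi_n^{-1})$. It therefore suffices to produce one such $f$ with
\[
\int_{N_n(F)\backslash\GL_n(F)} f(h)W(h)\,dh\neq 0 .
\]

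To construct $f$, pick $h_0$ with $W(h_0)\neq 0$. By replacing $W$ with $\rho(h_0)W$ and using the equivariance \eqref{E:Whittaker equivariant}, we may assume $W(1)\neq0$. Choose a compactly supported smooth function $\phi$ on a small neighborhood of $1$ in the open cell $T_n^{\,}\,{}^{t}\!N_n\cdots$; more simply, use the Iwasawa decomposition $\GL_n(F)=N_n(F)T_n^+K_n$. Let $\phi$ be a smooth compactly supported bump on $T_n^+\times K_n$ concentrated near $(1,1)$. Then set
\[
f(utk)=\psi_n^{-1}(u)\,\phi(t,k)\,\overline{W(tk)}\cdot|W(tk)|^{0},
\]
or, more cleanly, $f(utk)=\psi_n^{-1}(u)\phi(t,k)\overline{W(tk)}$. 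This $f$ is well defined on $N_n\backslash\GL_n$ with the correct equivariance, because $W(utk)=\psi_n(u)W(tk)$. It is smooth and compactly supported modulo $N_n(F)$, so it lies in $\sC(\cdots;\psi_n^{-1})$. With this choice the integrand becomes $\phi(t,k)|W(tk)|^2\geq0$, which is positive near the identity. The integral is therefore positive, so $\mathcal{W}_{\varphi\otimes W}(1)\neq0$ and $\sV(\pi,\psi_{n+1})\neq0$.

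One subtlety is that the integral defining $\mathcal{W}$ is taken on $\sS$ and not on the coinvariants. However, $\Psi(\varphi)$ depends only on $[\varphi]$, so this causes no issue. The main obstacle is therefore the surjectivity of \eqref{E:explicit iso.}. If one prefers not to rely on the full isomorphism, I would instead verify the needed surjectivity directly. Take $\varphi(x)=\varphi_1(x_0)\varphi_2(y)$ for $x=(x_0,y)$ with $x_0\in M_{n,n}(F)$ and $y\in F^{n}$. Then compute $\Psi(\varphi;h)$ by writing $u\in N_{n+1}(F)$ as $\bp u_0 & v\\ 0&1\ep$. The $v$-integral becomes a Fourier transform of $\varphi_2$ evaluated at $h^{-1}$-row data, and the remaining integral is $\int_{N_n}\varphi_1(h^{-1}u_0)\psi_n^{-1}(u_0)\,du_0$ times that Fourier transform. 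Choosing $\varphi_1$ supported near $h_0^{-1}$ and $\hat\varphi_2$ nonvanishing there then gives a bump function of the required type directly.
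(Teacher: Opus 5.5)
Your main argument is correct, but it takes a different route from the paper. You use the surjectivity of $\Psi$ onto $\sC(N_n(F)\backslash\GL_n(F);\psi_n^{-1})$, i.e.\ the isomorphism \eqref{E:explicit iso.}, and then pair $W$ against a test function of the form $\beta\,\overline{W}$, so that positivity of $\int\beta|W|^2$ finishes the proof.

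The paper's proof never uses \eqref{E:explicit iso.}. It takes factorizable $\varphi=\phi_1\otimes\phi_2$ with $\phi_1\in\sS(\GL_n(F))\subset\sS(M_n(F))$. It evaluates the last-column part of the $N_{n+1}(F)$-integral as a Fourier transform $\phi_2'(e_nh)$. It then unfolds the remaining $N_n(F)$-integral against $N_n(F)\backslash\GL_n(F)$ to get $\mathcal{W}_{\varphi\otimes W}(I_{n+1})=\int_{\GL_n(F)}W(h)\phi_1'(h)\phi_2'(e_nh)\,dh$. Vanishing for all $\phi_1,\phi_2$ then forces $W=0$ via approximate identities in $\sS(\GL_n(F))$.

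Your fallback paragraph is essentially this computation, but as written it stops short. The function $\Psi(\varphi;h)=|\det h|_F^{-(n-1)/2}\hat\varphi_2(e_nh)\int_{N_n(F)}\varphi_1(h^{-1}u_0)\psi_n^{-1}(u_0)\,du_0$ is a Whittaker average, not of the form $\beta\overline{W}$. Nonvanishing of its pairing with $W$ comes from the unfolding step, which you should state.

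As for what each approach buys: your route is shorter and conceptually clean, but it rests on the Gomez--Zhu-type isomorphism, whose verification the paper only sketches. The paper's route is elementary and self-contained, so Lemma~\ref{L:image} and Proposition~\ref{P:GZ} enter Theorem~\ref{T:image} as independent inputs.

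One small repair. Over non-Archimedean $F$, the factorization $h=utk$ with $t\in T_n^+$ determines $t$ but determines $u$ and $k$ only up to $N_n(\mathfrak{o})$. So $f(utk)=\psi_n^{-1}(u)\phi(t,k)\overline{W(tk)}$ is well defined only if $\phi(t,\cdot)$ is left $N_n(\mathfrak{o})$-invariant. It is cleaner to set $f=\beta\,\overline{W}$ with $\beta\ge 0$ smooth and compactly supported on $N_n(F)\backslash\GL_n(F)$ and $\beta(h_0)>0$. This works uniformly in $F$ and makes the reduction to $W(1)\neq0$ unnecessary.
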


\begin{proof}
Our task is to show that there exist $W\in\sW(\pi,\psi_n)$ and $\varphi\in\sS$ such that $\mathcal{W}_{\varphi\otimes W}\ne 0$. In fact, 
we can prove a stronger result: given $0\ne W\in\sW(\pi,\psi_n)$, there exists $\varphi\in\sS$ such that $\mathcal{W}_{\varphi\otimes W}\ne 0$. 
Note that, by \eqref{E:Whittaker equivariant}, it suffices to show that $\mathcal{W}_{\varphi\otimes W}\left(I_{n+1}\right)\ne 0$ for some 
$\varphi$. We argue by contradiction. Hence, assume that $\mathcal{W}_{\varphi\otimes W}\left(I_{n+1}\right)=0$ for every $\varphi\in\sS$. 

Let $\sS(\GL_n(F))$ denote the space of $\C$-valued Schwartz functions on $\GL_n(F)$. Thus, it consists of smooth functions 
$\phi:\GL_n(F)\to\C$ satisfying 
\[
\sup_{h\in\GL_n(F)}|X\phi(h)|_\C\|h\|^d<\infty
\]
for every $X$ in the universal enveloping algebra of ${{\rm Lie}}(\GL_n(F))$ and every $d\in\mathbb{N}$ when $F$ is Archimedean
\footnote{The definitions in \cite{Casselman1989B} and \cite{AG2008} coincide; see \cite[\S 1.4]{AG2008}.} 
(\cite{Casselman1989B}, \cite{AG2008}), while it consists of locally constant functions on $\GL_n(F)$ with compact support when 
$F$ is non-Archimedean. 

By \cite[Theorem 5.4.3]{AG2008}, we can identify $\sS(\GL_n(F))$ with a subspace of $\sS(M_n(F))$ consisting of functions that
vanish, together with all their derivatives, on $M_n(F)\smallsetminus\GL_n(F)$ when $F$ is Archimedean. When $F$ is non-Archimedean, 
on the other hand, we can identify $\sS(\GL_n(F))$ with a subspace of $\sS(M_n(F))$ consisting of functions whose support 
is contained in $\GL_n(F)$. 

Let $\phi_1\in\sS(\GL_n(F))\subset\sS(M_n(F))$ and $\phi_2\in\sS\left(M_{n,1}(F)\right)$, and define $\varphi=\phi_1\otimes\phi_2\in\sS$ 
by
\[
\varphi(h,y)=\phi_1(h)\phi_2(y)
\quad\text{for $h\in M_n(F)$ and $y\in M_{n,1}(F)$.}
\]
We now compute $\mathcal{W}_{\varphi\otimes W}(I_{n+1})$, which is given by the following integral:
\[
\int_{N_n(F)\backslash\GL_n(F)} W(h)\int_{N_{n+1}(F)}|\det(h)|_F^{-(n+1)/2}\varphi(h^{-1}\varepsilon_nu)\psi^{-1}_{n+1}(u)dudh. 
\]
By writing
\[
u=\begin{pmatrix}v&y\\0&1\end{pmatrix}
\quad\text{with $v\in N_n(F)$ and $y\in M_{n,1}(F)$,}
\]
and noting that 
\[
\psi_{n+1}(u)=\psi_n(v)\psi(e_ny)
\quad\text{where $e_n:=(0,..., 0,1)\in M_{1,n}(F)$},
\]
the above integral becomes 
\[
\int_{N_n(F)\backslash\GL_n(F)} W(h)\phi'_2(e_nh)
\int_{N_{n}(F)}\phi'_1\left( v^{-1}h\right)\psi^{-1}_{n}(v)dv
dh,
\]
where we have defined
\[
\phi'_1\in\sS(\GL_n(F))
\quad\text{and}\quad
\phi'_2\in\sS\left(M_{1,n}(F)\right)
\]
by
\[
\phi_1'(h)=|\det(h)|_F^{-(n-1)/2}\phi_1\left(h^{-1}\right)
\quad\text{for $h\in\GL_n(F)$},
\]
and 
\[
\phi'_2(y)
=
\int_{M_{n,1}(F)}\phi_2(x)\psi^{-1}(y x)dx
\quad\text{for $y\in M_{1,n}(F)$}. 
\]

To proceed, we change the variable $v^{-1}\mapsto v$, and observe that  
\[
\psi_n(v)W(h)=W(vh)\quad\text{and}\quad \phi'_2(e_nh)=\phi'_2(e_nvh).
\]
It follows that
\begin{align*}
&\int_{N_n(F)\backslash\GL_n(F)} W(h)\phi'_2(e_nh)
\int_{N_{n}(F)}\phi'_1\left( vh\right)\psi_{n}(v)dvdh\\
&\quad=
\int_{N_n(F)\backslash\GL_n(F)}\int_{N_n(F)}
W(vh)\phi'_1(vh)\phi'_2(e_nvh)dvdh\\
&\quad\quad=
\int_{\GL_n(F)}W(h)\phi'_1(h)\phi'_2(e_nh)dh
=
0,
\end{align*}
for every $\phi_1\in\sS(\GL_n(F))$ and $\phi_2\in\sS(M_{n,1}(F))$. Since the maps $\phi_1\mapsto\phi'_1$ and $\phi_2\mapsto\phi'_2$ 
define automorphism of $\sS(\GL_n(F))$ and isomorphism from $\sS(M_{n,1}(F))$ to $\sS(M_{1,n}(F))$, respectively, we conclude that 
\[
W(h)=0\quad\text{for every $h\in\GL_n(F)$, i.e. $W=0$.}
\]
Indeed, we may assume that $\phi'_2$ vanishes nowhere, then since the space 
$\sS(\GL_n(F))$ contains approximate identities, the result follows.
\end{proof}

Now we come to the first main result of this article:

\begin{thm}\label{T:image}
Assume that $\pi$ is generic and that $\pi\times 1$ is irreducible. Then  $\sW(\pi\times 1,\psi_{n+1})=\sV(\pi,\psi_{n+1})$. 
\end{thm}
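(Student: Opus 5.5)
The plan is to show that $\sV(\pi,\psi_{n+1})$ is a nonzero quotient of $\Theta(\pi^\vee)$ realized inside a Whittaker-type function space, and then to use irreducibility of $\pi\times 1$ together with Corollary \ref{C:theta} to identify it with $\sW(\pi\times1,\psi_{n+1})$.

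\emph{Step 1: $\mathcal{W}$ factors through the big theta lift.} By \eqref{E:Whittaker equivariant}, the map $\varphi\otimes W\mapsto \mathcal{W}_{\varphi\otimes W}$ is $\GL_n(F)$-invariant when $\GL_n(F)$ acts diagonally on $\omega\widehat\otimes\pi$, where $\pi\simeq \sW(\pi,\psi_n)$. It is also $\GL_{n+1}(F)$-equivariant, and continuous because the defining integral converges absolutely. Hence it factors through $\Theta(\pi^\vee)=(\omega\widehat\otimes\pi)_{\GL_n(F)}$, giving a $\GL_{n+1}(F)$-map
\[
\Theta(\pi^\vee)\longrightarrow \sV(\pi,\psi_{n+1}),
\]
and this map is surjective. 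Composing with evaluation at $I_{n+1}$ gives a Whittaker functional $\lambda$ on $\Theta(\pi^\vee)$. By Lemma \ref{L:image}, $\lambda\neq0$. By Proposition \ref{P:GZ} and the genericity of $\pi$, the space of Whittaker functionals on $\Theta(\pi^\vee)$ is one-dimensional, so $\lambda$ spans it.

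\emph{Step 2: identify the image.} Every irreducible quotient of $\Theta(\pi^\vee)$ is $\theta(\pi^\vee)$ by Howe duality. Applying Corollary \ref{C:theta} to $\pi^\vee$, we need $\pi\times 1$ irreducible, which is the hypothesis (note $(\pi^\vee)^\vee=\pi$). This gives $\theta(\pi^\vee)=\pi\times 1$, which is generic. Since $\theta(\pi^\vee)$ is a quotient of $\Theta(\pi^\vee)$, its Whittaker functional pulls back to a nonzero Whittaker functional on $\Theta(\pi^\vee)$. By one-dimensionality, that pullback is proportional to $\lambda$.

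Thus $\lambda$ factors through the quotient map $\Theta(\pi^\vee)\to\pi\times1$. For $v\in\Theta(\pi^\vee)$ with image $\bar v$, the function $g\mapsto\lambda(\Theta(g)v)$ is therefore a nonzero multiple of the Whittaker function $g\mapsto\lambda_{\psi_{n+1}}((\pi\times1)(g)\bar v)$. Since $\sV(\pi,\psi_{n+1})$ consists exactly of the functions $g\mapsto\lambda(\Theta(g)v)$, it equals $\sW(\pi\times1,\psi_{n+1})$.

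\emph{Expected obstacle.} The main difficulty is Step 1 in the Archimedean case. One must check that $\mathcal{W}$ is continuous on the completed tensor product, so that it descends to the coinvariants. One must also check that the Whittaker functional produced by the Gomez--Zhu isomorphism \eqref{E:explicit iso.} is compatible with evaluation at $I_{n+1}$. By \eqref{E:V}, $\mathcal{W}_{\varphi\otimes W}(I)$ is the pairing of $\Psi(\varphi)\in\sC(N_n(F)\backslash\GL_n(F);\psi_n^{-1})$ against $W$, so continuity follows from the rapid decay of $\Psi(\varphi)$ and the moderate growth of $W$. A further subtlety is that coinvariants may fail to be Hausdorff; since our target space $\sV(\pi,\psi_{n+1})$ is Hausdorff, the map factors through the maximal Hausdorff quotient, and that suffices. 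Only the dimension statement of Proposition \ref{P:GZ} is needed, not any further structure.
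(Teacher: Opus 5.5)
Your argument is correct, but it handles the structure of $\Theta(\pi^\vee)$ differently from the paper.

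\textbf{The paper's route.} It takes a composition series $0=\sV_0\subset\cdots\subset\sV_M=\Theta(\pi^\vee)$, which exists because $\Theta(\pi^\vee)$ has finite length by Howe. It then uses Proposition~\ref{P:GZ} to conclude that every constituent other than the top one $\sV_M/\sV_{M-1}\cong\pi\times 1$ is non-generic. Next it shows inductively that the surjection $\Xi:\Theta(\pi^\vee)\to\sV(\pi,\psi_{n+1})$ kills each $\sV_\ell$ with $\ell\le M-1$, since a non-generic irreducible cannot embed in a space of Whittaker functions. Lemma~\ref{L:image} then gives $\sV(\pi,\psi_{n+1})\cong\pi\times 1$.

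\textbf{Your route.} You work directly with functionals. One-dimensionality forces $\lambda=\mathrm{ev}_{I_{n+1}}\circ\Xi$ to be a multiple of the pullback $\lambda_{\psi_{n+1}}\circ q$, and the multiple is nonzero by Lemma~\ref{L:image}. This gives $\ker q\subset\ker\Xi$ and the equality of function spaces in one step.

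\textbf{What each route buys.} Yours needs neither the finite length of $\Theta(\pi^\vee)$ nor the claim that the lower constituents are non-generic. That claim, deduced from a composition series and a one-dimensional Whittaker space, implicitly requires exactness of the Whittaker functor. In the $p$-adic case this is standard for twisted Jacquet modules; in the Archimedean case it is a nontrivial theorem of Casselman--Hecht--Mili\v{c}i\'c, which the paper leaves unstated. Your route also avoids any question of whether the images $\Xi(\sV_\ell)$ are closed. The paper's route, in exchange, records the extra structural fact that all other constituents of $\Theta(\pi^\vee)$ are non-generic.
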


\begin{proof}
The result \cite[Theorem 2.1]{Howe1989} of Howe implies that the big theta lift $\Theta(\pi^\vee)$ is of finite length; thus it admits a composition series
\[
\{0\}=\sV_0\subset\sV_1\subset\cdots\subset\sV_M=\Theta(\pi^\vee).
\]
By the aforementioned result of Howe and Corollary \ref{C:theta}, the top quotient satisfies:
\[
\sV_M/\sV_{M-1}\cong\theta(\pi^\vee)=\pi\times 1.
\]
Note that $\pi\times 1$ is generic because $\pi$ itself is generic.
On the other hand, by Proposition \ref{P:GZ}, we have
\[
\dim_\C{\rm Hom}_{N_{n+1}(F)}\left(\Theta(\pi^\vee),\psi_{n+1}\right)=1.
\]
From these, we conclude that the subquotients $\sV_{\ell}/\sV_{\ell-1}$ for $1\le \ell\le M-1$ must be non-generic.

By the definition of $\Theta(\pi^\vee)$ and \eqref{E:Whittaker equivariant}, there exists a surjective 
$\GL_{n+1}(F)$-intertwining map
\[
\Xi: \Theta(\pi^\vee)\relbar\joinrel\twoheadrightarrow\sV(\pi,\psi_{n+1})
\subset
\mathcal{C}^\infty(N_{n+1}(F)\backslash\GL_{n+1}(F),\psi_{n+1})^{\rm mg}.
\]
This map $\Xi$ gives rise to a filtration of $\GL_{n+1}(F)$-representation
\[
0=\Xi(\sV_0)\subset\Xi(\sV_1)\subset\cdots\subset\Xi(\sV_{M-1})\subset\Xi(\sV_M)
=
\sV(\pi,\psi_{n+1}).
\]
Since $\Xi(\sV_\ell)/\Xi(\sV_{\ell-1})$ is a quotient of the irreducible representation $\sV_\ell/\sV_{\ell-1}$ for each $1\le \ell\le M$,
it is either $0$ or isomorphic to $\sV_\ell/\sV_{\ell-1}$. In particular, 
\[
\Xi(\sV_1)\subset\mathcal{C}^\infty(N_{n+1}(F)\backslash\GL_{n+1}(F),\psi_{n+1})^{\rm mg}
\] 
must be $0$ since $\sV_1$ is non-generic. Consequently, $\Xi(\sV_2)$ is either $0$ or isomorphic to $\sV_2/\sV_1$. 
Applying the same reasoning, we deduce that 
$\Xi(\sV_2)=0$. Repeating this argument, we find that $\Xi(\sV_{M-1})=0$. Now, since $\Xi(\sV_M)=\sV(\pi,\psi_{n+1})$ is 
non-zero by Lemma \ref{L:image}, it follows that
\[
\pi\times 1\cong\sV_M/\sV_{M-1}\cong\Xi(\sV_M)=\sV(\pi,\psi_{n+1}).
\]
This completes the proof.
\end{proof}

\section{Propagation formula}\label{S:propagation}
In this section, we assume that $F$ is Archimedean. Our goal is to derive ``propagation formulas'' for the Whittaker functions in the minimal $K$-type, using the theta lifting
discussed in the previous section. We retain the notation \eqref{E:case m=n+1}.

\subsection{Minimal $K$-type Whittaker functions}
Let $\pi$ be an irreducible generic representation of $\GL_n(F)$. We assume that
\[
\Pi:=\pi\times 1
\] 
is irreducible, so that $\Pi$ is an irreducible generic representation of $\GL_{n+1}(F)$

Let $\tau$ (resp. $\Upsilon$) be the minimal $K$-type of $\pi$ (resp. $\Pi$), which occurs with multiplicity one.
It follows that the subspace
\begin{equation}\label{E:Whittaker newform for GL_n}
\left(\mathscr{W}(\pi,\psi_n)\otimes \tau^\vee\right)^{K_n}=\mathbb{C}\, W_{\pi}
\end{equation}
of $K_n$-fixed vectors is one-dimensional. We can regard $W_{\pi}$ as a $\tau^\vee$-valued function on $\GL_n(F)$ 
satisfying:
\begin{itemize}
\item
$W_{\pi}(gk)=\tau^\vee(k)^{-1}W_{\pi}(g)$ for $(g,k)\in \GL_n(F)\times K_n$; 
\item
for each $v\in\tau$, the $\mathbb{C}$-valued function $g\mapsto\langle v, W_{\pi}(g)\rangle_\tau$ belongs to 
$\mathscr{W}(\pi,\psi_n)[\tau]$, the $\tau$-isotypic part of $\sW(\pi,\psi_n)$.
\end{itemize}
Here $\langle\cdot,\cdot\rangle_\tau$ denotes the natural $K_n$-invariant bilinear pairing between $\tau$ and $\tau^\vee$. Note that
$W_{\pi}$ is uniquely characterized by these two properties. 

Similarly, we have
\[
\left(\mathscr{W}(\Pi,\psi_{n+1})\otimes \Upsilon^\vee\right)^{K_{n+1}}=\mathbb{C}\, W_{\Pi}
\]
with $W_{\Pi}$ satisfying analogous properties. By the Iwasawa decomposition, it is clear that 
$W_{\pi}$ (resp. $W_{\Pi}$) is completely determined by its values on $T^+_n$ (resp. $T^+_{n+1}$).

\subsection{Joint harmonics}
We aim to express $W_{\Pi}|_{T^+_{n+1}}$ in terms of $W_{\pi}|_{T^+_n}$ using the equation \eqref{E:V}. 
The idea is to select a suitable function $\varphi_{\tau,\Upsilon}$ from the subspace
\[
\left(\mathscr{S}\otimes\left(\tau\boxtimes\Upsilon^\vee\right)\right)^{K_n\times K_{n+1}}
\]
such that the integral analogous to \eqref{E:V} with $W=W_{\pi}$ and $\varphi=\varphi_{\tau,\Upsilon}$ is non-zero.
As above, $\varphi_{\tau,\Upsilon}$ can be regarded as a $\tau\otimes\Upsilon^\vee$-valued function on $M_{n,n+1}(F)$, 
whose coefficients belong to $\mathscr{S}$, and it satisfies the identity
\begin{equation}\label{E:equivariant}
\omega(k,k')\varphi_{\tau,\Upsilon}=\left(\tau\boxtimes\Upsilon^\vee\right)(k,k')^{-1}\varphi_{\tau,\Upsilon}
\quad
\text{for $(k,k')\in K_n\times K_{n+1}$.}
\end{equation}
Here, $\omega(k,k')$ acts on the coefficients of $\varphi_{\tau,\Upsilon}$. 

To this end, let $\sH\subset\sS$ denote the subspace of joint harmonics in the sense of Howe 
(\cite[Section 7 (b)]{Howe1989}). This is a $(K_n \times K_{n+1})$-invariant subspace which admits a multiplicity-free 
decomposition
\[
\sH\cong\bigoplus_i\sH_{\sigma_i,\Sigma_i}\cong\bigoplus_i\sigma_i\boxtimes\Sigma_i.
\]
Moreover, for each $i$, $\sigma_i$ and $\Sigma_i$ determine each other, and they share the same degree defined by Howe.

Since $\pi$ and $\Pi$ are irreducible and generic, they are isomorphic to full induced representations by a result of Vogan
(\cite{Vogan1978}). Consequently, the highest weight of $\tau$ and $\Upsilon$ can be describe explicitly 
(\cite[Proposition 4.3]{Lin2018}, \cite[Proposition 4.1]{Chen2019}, \cite[Remark 6.8]{Humphries2025}).
Now the key observation is that $\tau^\vee\boxtimes\Upsilon$ occurs in $\sH$ (\cite[Proposition 1.4]{AdamsBarbasch1995}, 
\cite[Proposition 7.5]{Adams2007}). Therefore, the space 
\[
\left(\sH\otimes\left(\tau\boxtimes\Upsilon^\vee\right)\right)^{K_n\times K_{n+1}}
\]
is one-dimensional, and the distinguished function $\varphi_{\tau,\Upsilon}$ is chosen from a basis of this space.

\subsection{The formulas}
At this point, we introduce the $\Upsilon^\vee$-valued function $\mathcal{W}_{\varphi_{\tau,\Upsilon}\otimes W_{\pi}}$ on $\GL_{n+1}(F)$ 
defined by the integral:
\[
\mathcal{W}_{\varphi_{\tau,\Upsilon}\otimes W_{\pi}}(g)
=
\int_{N_n(F)\backslash\GL_n(F)}\int_{N_{n+1}(F)}
\langle\omega(h,ug)\varphi_{\tau,\Upsilon}(\varepsilon_n),W_{\pi}(h)\rangle_\tau\psi^{-1}_{n+1}(u)\,du\,dh.
\]
Since the coefficients are given by the integral \eqref{E:V}, which converges absolutely, the 
function $\mathcal{W}_{\varphi_{\tau,\Upsilon}\otimes W_{\pi}}$ is well-defined. Moreover, we have the following, which is the 
main result of this article.

\begin{thm}\label{T:main}
The function $\mathcal{W}_{\varphi_{\tau,\Upsilon}\otimes W_{\pi}}$ is non-zero, and belongs to 
\[
\left(\mathscr{W}(\Pi,\psi_{n+1})\otimes \Upsilon^\vee\right)^{K_{n+1}}. 
\]
Hence, we can identify
\[
W_{\Pi}=\mathcal{W}_{\varphi_{\tau,\Upsilon}\otimes W_{\pi}}.
\] 
Moreover, we have the following propagation formula:
\begin{align}\label{E:propagation formula}
\begin{split}
W_{\Pi}(g)
=
c|\det(g)|_F^{n/2}
&\int_{T^+_n}\int_{N_{n+1}(F)}\,du\,dt\\
&\quad\langle \varphi_{\tau,\Upsilon}(t^{-1}\varepsilon_n ug), W_{\pi}(t)\rangle_\tau
\psi^{-1}_{n+1}(u)
\delta^{-1}_{B_n(F)}(t)
|\det(t)|_F^{-(n+1)/2},
\end{split}
\end{align}
where $c={\rm Vol}(K_n, dk)$, and $\delta_{B_n(F)}$ is the modulus function of $B_n(F)$.
\end{thm}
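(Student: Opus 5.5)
The proof splits into three parts: membership in the right space, non-vanishing, and the explicit formula.

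\emph{Membership.} For each $v\in\Upsilon$, the coefficient $g\mapsto\langle \mathcal{W}_{\varphi_{\tau,\Upsilon}\otimes W_\pi}(g),v\rangle$ is a finite sum of functions $\mathcal{W}_{\varphi\otimes W}$ with $\varphi\in\sS$ a coefficient of $\varphi_{\tau,\Upsilon}$ and $W$ a coefficient of $W_\pi$. Since $\Pi$ is irreducible, Theorem~\ref{T:image} (applied with $\pi\times 1$ irreducible) puts each such function in $\sW(\Pi,\psi_{n+1})$. For $K_{n+1}$-equivariance, take $k'\in K_{n+1}$ and use \eqref{E:Whittaker equivariant}:
\[
\rho(k')\mathcal{W}_{\varphi\otimes W}=\mathcal{W}_{\omega(1,k')\varphi\otimes W}.
\]
Combined with \eqref{E:equivariant}, this gives $\mathcal{W}_{\varphi_{\tau,\Upsilon}\otimes W_\pi}(gk')=\Upsilon^\vee(k')^{-1}\mathcal{W}_{\varphi_{\tau,\Upsilon}\otimes W_\pi}(g)$. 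One should also check that the $K_n$-variable causes no trouble. The substitution $h\mapsto hk$ in the $h$-integral, together with the $K_n$-equivariance of $\varphi_{\tau,\Upsilon}$ and $W_\pi$ and the invariance of $\langle\cdot,\cdot\rangle_\tau$, shows that the integrand is $K_n$-invariant in $h$. Hence $\mathcal{W}_{\varphi_{\tau,\Upsilon}\otimes W_\pi}$ lies in $(\sW(\Pi,\psi_{n+1})\otimes\Upsilon^\vee)^{K_{n+1}}$. Because this space is one-dimensional (the minimal $K$-type occurs with multiplicity one), non-vanishing will give $W_\Pi=\mathcal{W}_{\varphi_{\tau,\Upsilon}\otimes W_\pi}$ up to normalization.

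\emph{The formula.} This is the routine part. Apply the Iwasawa decomposition $h=ntk$ on $N_n(F)\backslash\GL_n(F)$, with measure $dh=\delta_{B_n(F)}^{-1}(t)\,dt\,dk$. The $K_n$-invariance of the integrand just noted turns the $k$-integral into the factor $c=\mathrm{Vol}(K_n)$. Then expand
\[
\omega(t,ug)\varphi(\varepsilon_n)=|\det t|_F^{-(n+1)/2}\,|\det g|_F^{n/2}\,\varphi(t^{-1}\varepsilon_n ug),
\]
using $|\det u|=1$. This yields \eqref{E:propagation formula}.

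\emph{Non-vanishing: the main obstacle.} Lemma~\ref{L:image} produces some $\varphi$ with $\mathcal{W}_{\varphi\otimes W_\pi}\neq0$, but not a $K$-finite vector of the prescribed type. The plan is to project. Fix a nonzero coefficient $W$ of $W_\pi$ in $\sW(\pi,\psi_n)[\tau]$ and a $\varphi$ with $\mathcal{W}_{\varphi\otimes W}\ne0$; by continuity and density, $\varphi$ may be taken $K_n\times K_{n+1}$-finite. The map $\Xi\colon\sS\otimes\pi\to\Pi$ is $\GL_n(F)\times\GL_{n+1}(F)$-equivariant. Projecting the image onto the $\Upsilon$-isotypic part, and $\varphi$ onto the $(\tau^\vee\boxtimes\Upsilon)$-isotypic part of $\sS$ (using $K_n$-invariance of the pairing with $W\in\pi[\tau]$), shows that some $\varphi'\in\sS[\tau^\vee\boxtimes\Upsilon]$ gives a nonzero contribution. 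This uses that $\Upsilon$ occurs in $\Pi$.

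It remains to pass from a general $\varphi'$ to the joint harmonic. By Howe's structure theorem, $\sS[\tau^\vee\boxtimes\Upsilon]$ is generated by $\sH_{\tau^\vee,\Upsilon}$ under the action of the complexified Lie algebras, or more precisely by $\mathcal{U}(\mathfrak{g}_n)\otimes\mathcal{U}(\mathfrak{g}_{n+1})$ applied to degree-raising operators. Since $\Theta(\pi^\vee)$, and hence $\Pi$, is generated by the image of the joint harmonics of minimal degree, the image of $\varphi_{\tau,\Upsilon}\otimes W$ generates $\Pi$. I would argue this via Howe's result that $\Theta$ is generated by the image of $\sH_{\tau^\vee,\Upsilon}\otimes\pi[\tau]$ when $\Upsilon$ is the minimal $K$-type of $\theta$, as in Adams--Barbasch. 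Given that, the $\Upsilon$-isotypic image is spanned by $\Xi(\varphi_{\tau,\Upsilon}\otimes W)$ by multiplicity one, and non-vanishing follows. I expect the precise justification of this generation statement in the Archimedean smooth setting to be the hardest step. A fallback is a direct asymptotic computation on $T^+_{n+1}$, isolating the leading term as $t\to0$.
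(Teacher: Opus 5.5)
Your proposal is correct and follows essentially the paper's proof: membership via Theorem~\ref{T:image} together with \eqref{E:Whittaker equivariant} and \eqref{E:equivariant}; the formula via the Iwasawa decomposition, with the $K_n$-integral producing $c$; and non-vanishing via Howe's result that the maximal $\pi^\vee$-isotypic quotient of $\omega$ is generated by $\sH_{\tau^\vee,\Upsilon}$, which the paper likewise takes from Howe's proof, using that $\tau^\vee$ has minimal Howe degree among the $K_n$-types of $\pi^\vee$. Once that generation statement is granted, your projection step through a general $\varphi'\in\sS[\tau^\vee\boxtimes\Upsilon]$ and the accompanying claim about generation under $\mathcal{U}(\mathfrak{g}_n)\otimes\mathcal{U}(\mathfrak{g}_{n+1})$ are unnecessary and can be dropped.
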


\begin{proof}
We have noticed that the coefficients of $\mathcal{W}_{\varphi_{\tau,\Upsilon}\otimes W_{\pi}}$ are defined by the integral
\eqref{E:V}. It then follows from Theorem \ref{T:image} that these coefficients, which are functions on $\GL_{n+1}(F)$, are 
contained in $\mathscr{W}(\Pi,\psi_{n+1})$.  Moreover, it is straightforward to verify that 
$\mathcal{W}_{\varphi_{\tau,\Upsilon}\otimes W_{\pi}}$ satisfies 
\[
\mathcal{W}_{\varphi_{\tau,\Upsilon}\otimes W_{\pi}}(gk)
=
\Upsilon^\vee(k)^{-1}\mathcal{W}_{\varphi_{\tau,\Upsilon}\otimes W_{\pi}}(g)
\quad\text{for $(k,g)\in K_{n+1}\times\GL_{n+1}(F)$},
\]
using \eqref{E:equivariant}. Therefore, we conclude that
\[
\mathcal{W}_{\varphi_{\tau,\Upsilon}\otimes W_{\pi}}
\in
\left(\mathscr{W}(\Pi,\psi_{n+1})\otimes \Upsilon^\vee\right)^{K_{n+1}}.
\]

To show that $\mathcal{W}_{\varphi_{\tau,\Upsilon}\otimes W_{\pi}}$ is non-zero,  note that the minimal $K$-type $\tau^\vee$ of 
$\pi^\vee$ also has the minimal degree in the sense of Howe. It then follows from the fact that $\tau^\vee\boxtimes\Upsilon$
is contained in the joint harmonic $\sH$, and from Howe's proof,  that the maximal $\pi^\vee$-isotypic quotient of 
$\omega$, which is a representation of $\GL_n(F)\times\GL_{n+1}(F)$, is generated by $\sH_{\tau^\vee,\Upsilon}$. 

Consequently, the unique $\left(\GL_n(F)\times\GL_{n+1}(F)\right)$-intertwining map
\[
U:\omega\relbar\joinrel\twoheadrightarrow\sW(\pi^\vee,\psi_n)\,\widehat{\boxtimes}\,\sW\left(\Pi,\psi_{n+1}\right)
\]
which factors through the maximal $\pi^\vee$-isotypic quotient of $\omega$, is non-zero when restricted to the subspace 
$\sH_{\tau,^\vee,\Upsilon}$. By Theorem \ref{T:image} and the fact that the maps $U,V$ are related by 
\[
\mathcal{W}_{\varphi\otimes W}=\langle W, U(\varphi)\rangle_\pi,
\]
where $\langle\cdot,\cdot\rangle_\pi$ is the $\GL_n(F)$-invariant bilinear pairing between $\sW(\pi,\psi_n)$ and
$\sW(\pi^\vee,\psi_n)$, we conclude that the coefficients of $\mathcal{W}_{\varphi_{\pi,\Pi}\otimes W_{\pi}}$, which are
 elements in $\sW(\Pi,\psi_{n+1})[\Upsilon]$, are non-zero. This proves the first assertion.

It remains to justify \eqref{E:propagation formula}. By the Iwasawa decomposition $\GL_n(F)=N_n(F)T^+_nK_n$, and 
\eqref{E:equivariant}, we obtain
\begin{align*}
W_{\Pi}(g)
=
|\det(g)|_F^{n/2}
&\int_{N_n(F)\backslash\GL_n(F)}\int_{N_{n+1}(F)}\,du\,dh\\
&\quad\langle  \varphi_{\pi,\Pi}(h^{-1}\varepsilon_n ug), W_{\pi}(h)\rangle_\tau
\psi^{-1}_{n+1}(u)|\det(h)|_F^{-(n+1)/2}\\
=
|\det(g)|_F^{n/2}
&\int_{T^+_n}\int_{N_{n+1}(F)}\int_{K_n}\,dk\,du\,dt\\
&\quad\langle  \tau(k)^{-1}\varphi_{\pi,\Pi}(t^{-1}\varepsilon_n ug), \tau^\vee(k)^{-1}W_{\pi}(t)\rangle_\tau
\psi^{-1}_{n+1}(u)
\delta^{-1}_{B_n(F)}(t)
|\det(t)|_F^{-(n+1)/2}\\
=
|\det(g)|_F^{n/2}
&\int_{T^+_n}\int_{N_{n+1}(F)} c\,du\,dt\\
&\quad\langle  \varphi_{\tau,\Upsilon}(t^{-1}\varepsilon_n ug), W_{\pi}(t)\rangle_\tau
\psi^{-1}_{n+1}(u)
\delta^{-1}_{B_n(F)}(t)
|\det(t)|_F^{-\frac{n+1}{2}}.
\end{align*}
This completes the proof.
\end{proof}

\begin{rmk}\noindent
\begin{itemize}
\item[(1)]
By the formula \eqref{E:propagation formula}, the Whittaker function $W_{\Pi}$ can be computed inductively. 
The problem is thus reduced to finding the test function $\varphi_{\tau,\Upsilon}$, and explicitly evaluating the 
integral. We note that the space of joint harmonic can be explicitly defined in terms of the Fock model of $\omega$. 
This fact may assist in developing a systematic method for determining $\varphi_{\tau,\Upsilon}$. 
\item[(2)]
In practice, for a given specific $\pi$, one can simply choose a test function $\varphi_{\tau,\Upsilon}$ satisfying \eqref{E:equivariant}, and 
compute the integral \eqref{E:propagation formula} directly. 
The conclusion of Theorem \ref{T:main} still holds provide the integral is non-zero.
\item[(3)]
In this article, we assume that $\tau$ and $\Upsilon$ are both of minimal $K$-types. However, it is clear that 
\eqref{E:propagation formula} holds for arbitrary $K$-types, provided that $W_\pi$ and $\varphi_{\tau,\Upsilon}$ satisfy 
\eqref{E:Whittaker newform for GL_n} and \eqref{E:equivariant}, respectively. We note, however, that an arbitrary $K$-type may not 
occur with multiplicity one, so in such cases, one must also choose $W_\pi$ appropriately.
\end{itemize}
\end{rmk}

\section{Examples}\label{S:examples}

In this section, we give examples of explicit formulas for Whittaker functions using our results Theorems \ref{T:image} and \ref{T:main}.
In the Archimedean case, these formulas describe Whittaker functions in the minimal \( K \)-type
of certain generic representations, while in the non-Archimedean case,
our results recover Shintani's formula for spherical Whittaker functions.


\subsection{Shintani's formula}\label{SS:Shintani}

In this subsection, assume $F$ is non-Archimedean. We give a different proof of Shintani's formula \cite{Shintani1976} for spherical Whittaker functions based on Theorem \ref{T:image}.

Let $\pi$ be an irreducible generic unramified representation of $\GL_n(F)$. Then we have
\[
\pi = \chi_1 \times \cdots \times \chi_n
\]
for some unramified characters $\chi_1,...,\chi_n$ of $F^\times$. 
The Satake parameters of $\pi$ are the numbers $\alpha_i:=\chi_i(\varpi)$ for $1 \leq i \leq n$.
For a dominant integral weight $\lambda = (\lambda_1,...,\lambda_n)$ for $\GL_n$, let $\varpi^\lambda \in T_n^+$ defined by
\[
\varpi^\lambda:={\rm diag}(\varpi^{\lambda_1},...,\varpi^{\lambda_n}).
\]
Let $W_\pi$ be a (spherical) Whittaker function on $\GL_n(F)$ with respect to $\psi_n$ defined so that
\[
W_\pi(\varpi^\lambda k) = \delta_{B_n(F)}^{1/2}(\varpi^\lambda) \frac{\det(\alpha_i^{\lambda_j+n-j})_{1 \leq i,j \leq n}}{\det(\alpha_i^{n-j})_{1 \leq i,j \leq n}} \quad \text{for $\varpi^\lambda \in T_n^+$ and $k \in K_n$}.
\]

\begin{thm}[Shintani]
We have $W_\pi \in \sW(\pi,\psi_n)$.
\end{thm}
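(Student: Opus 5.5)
Our plan is to prove Shintani's formula by induction on $n$, using Theorem \ref{T:image} to build spherical Whittaker functions on $\GL_{n+1}(F)$ from those on $\GL_n(F)$.

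\emph{Base case and generic reduction.} For $n=1$ the claim reads $W_\pi(\varpi^{\lambda}k)=\alpha_1^{\lambda_1}=\chi_1(\varpi^{\lambda_1})$, and this is clearly in $\sW(\chi_1,\psi_1)$. For the induction we first assume the Satake parameters are in general position. Concretely, we require $\alpha_i\alpha_j^{-1}\neq q^{\pm1}$ and $\alpha_i\neq q^{\pm 1/2}$, so that $\pi$ and $\pi\times 1$ are irreducible. We also want $\pi\times 1$ to be unramified with Satake parameters $\alpha_1,\dots,\alpha_n,1$. The general case then follows by analytic continuation. Both sides are Laurent polynomials in the $\alpha_i$ at each fixed $\varpi^\lambda$; the denominator is a Vandermonde determinant, so the ratio is a Schur polynomial. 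Moreover, membership in the Whittaker model of the unramified principal series is characterized by the spherical Whittaker function being the polynomial family given by Casselman--Shalika/Jacquet integrals, which depends polynomially on the parameters. It therefore suffices to prove the formula on a Zariski-dense set of parameters.

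\emph{Inductive step.} We apply Theorem \ref{T:image} with $\varphi=\mathbf{1}_{M_{n,n+1}(\frak{o})}$ and with $W=W_\pi$, which lies in $\sW(\pi,\psi_n)$ by the inductive hypothesis. Then $\mathcal{W}_{\varphi\otimes W_\pi}$ is a right $K_{n+1}$-invariant element of $\sW(\pi\times 1,\psi_{n+1})$. Using the Iwasawa decomposition exactly as in the proof of Theorem \ref{T:main}, with trivial $K$-types, we evaluate it at $g=\varpi^{\mu}$, where $\mu$ is dominant for $\GL_{n+1}$:
\[
\mathcal{W}(\varpi^\mu)=|\det\varpi^\mu|^{n/2}\sum_{\lambda}W_\pi(\varpi^\lambda)\delta_{B_n}^{-1}(\varpi^\lambda)|\det\varpi^\lambda|^{-(n+1)/2}\int_{N_{n+1}(F)}\mathbf{1}\bigl(\varpi^{-\lambda}\varepsilon_n u\varpi^\mu\bigr)\psi_{n+1}^{-1}(u)\,du .
\]
The inner integral over $u$ factors entrywise. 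The additive character integrals force the support condition; we expect it to reduce to the interlacing condition $\mu_1\ge\lambda_1\ge\mu_2\ge\cdots\ge\lambda_n\ge\mu_{n+1}$, with an explicit power of $q$. The character $\psi$ is trivial on $\frak{o}$ and nontrivial on $\frak{p}^{-1}$, so each entry contributes either a volume factor or zero. After the modulus factors cancel, this gives
\[
\mathcal{W}(\varpi^\mu)=\delta_{B_{n+1}}^{1/2}(\varpi^\mu)\sum_{\lambda\prec\mu}s_\lambda(\alpha_1,\dots,\alpha_n).
\]
The branching rule $s_\mu(\alpha_1,\dots,\alpha_n,1)=\sum_{\lambda\prec\mu}s_\lambda(\alpha)$ then identifies this with the claimed formula for $\pi\times1$. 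Since $\mathcal{W}(1)=1\neq0$, Theorem \ref{T:image} gives that $W_{\pi\times1}\in\sW(\pi\times 1,\psi_{n+1})$.

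\emph{Reaching arbitrary parameters, and the main obstacle.} The induction only produces parameter sets containing a $1$. To handle general parameters $(\beta_1,\dots,\beta_{n+1})$, we twist by the unramified character $\chi=\beta_{n+1}$. Twisting multiplies $W$ by $\chi(\det)$, and the Schur formula is homogeneous: $s_\mu(\beta)=\beta_{n+1}^{|\mu|}s_\mu(\beta/\beta_{n+1})$. So it suffices to treat parameters $(\beta_i/\beta_{n+1})$, whose last entry is $1$. We expect the main obstacle to be the bookkeeping in the $N_{n+1}$ integral. The unipotent $u$ mixes entries, so the computation must be organized column by column, integrating out the last column first (the $y$-variables) as in Lemma \ref{L:image}, so that the support condition comes out exactly as interlacing and the powers of $q$ match $\delta_{B_{n+1}}^{1/2}\delta_{B_n}^{-1/2}$.
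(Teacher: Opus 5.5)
This is correct and is essentially the paper's proof. You induct on $n$, twist so that $\chi_{n+1}=1$ and $\Pi=\pi\times 1$, and apply Theorem~\ref{T:image} to $\varphi=\mathbb{I}_{M_{n,n+1}(\frak{o})}$ and $W_\pi$. The $N_{n+1}(F)$-integral then factors entrywise with no mixing, since the $(i,j)$-entry of $\varpi^{-\lambda}\varepsilon_n u\varpi^{\mu}$ is just $\varpi^{\mu_j-\lambda_i}u_{ij}$; this gives the interlacing condition and the factor $\delta_{B_{n+1}}^{1/2}(\varpi^\mu)$, and the branching rule finishes. Your general-position and analytic-continuation detour can be dropped, because the theorem concerns irreducible $\pi$, and irreducibility of $\Pi=\chi_1\times\cdots\times\chi_{n+1}$ already makes $\pi$ and $\pi\times 1$ irreducible after the twist. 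Note also that the condition for $\pi\times 1$ to be irreducible is $\alpha_i\neq q^{\pm1}$, not $\alpha_i\neq q^{\pm1/2}$.
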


\begin{proof}
We prove by induction on $n$. The formula trivially holds for $n=1$. 
Suppose the formula holds for all irreducible unramified generic representations of $\GL_n(F)$.
Let 
\[
\Pi = \chi_1 \times \cdots \times \chi_{n+1}
\]
be an irreducible unramified generic representation of $\GL_{n+1}(F)$ for some unramified characters $\chi_1,...,\chi_{n+1}$ of $F^\times$.
By replacing $\Pi$ by $\Pi \otimes \chi_{n+1}^{-1}$, we may assume $\chi_{n+1} = 1$.
In this case, $\Pi = \pi \times 1$, where 
\[
\pi = \chi_1 \times \cdots \times \chi_n.
\]
By induction hypothesis, $W_\pi$ is an unramified Whittaker function of $\pi$ with respect to $\psi_n$.
Let $\varphi := \mathbb{I}_{M_{n,n+1}(\frak o)}$ be the characteristic function of $M_{n,n+1}(\frak o)$. It is clear that $\varphi$ is invariant under $K_n \times K_{n+1}$. Thus $\mathcal{W}_{\varphi \otimes W_\pi}$ is invariant by $K_{n+1}$.
Let $\lambda$ be a dominant integral weight for $\GL_{n+1}$.
Then we have
\[
\mathcal{W}_{\varphi \otimes W_\pi}(\varpi^{\lambda}) = \sum_{\mu} \Psi(\omega_{n,n+1}(1,\varpi^\lambda)\varphi;\varpi^\mu)W_\pi(\varpi^{\mu})\delta_{B_n(F)}^{-1}(\varpi^\mu),
\]
where $\mu$ runs through the dominant integral weights for $\GL_n$.
By a direct computation,
\begin{align*}
\Psi(\omega_{n,n+1}(1,\varpi^\lambda)\varphi;\varpi^\mu) &= |\det(\varpi^\mu)|_F^{-(n+1)/2}|\det(\varpi^\lambda)|_F^{n/2} \prod_{i=1}^n \mathbb{I}_{\frak{o}}(\varpi^{-\mu_i+\lambda_i}) \\
&\quad\times\prod_{1 \leq i<j\leq n+1}\int_F du_{ij}\, \mathbb{I}_{\frak{o}}(\varpi^{-\mu_i+\lambda_j}u_{ij})\psi^{-1}(u_{12}+u_{23}+\cdots+u_{n,n+1})\\
&= \delta_{B_n(F)}^{1/2}(\varpi^\mu)\delta_{B_{n+1}(F)}^{1/2}(\varpi^\lambda)\prod_{i=1}^n \mathbb{I}_{\frak{o}}(\varpi^{-\mu_i+\lambda_i})\mathbb{I}_{\frak{o}}(\varpi^{\mu_i-\lambda_{i+1}}).
\end{align*}
The above product is nonzero if and only if
\[
\lambda_i \ge \mu_i \ge \lambda_{i+1}
\quad\text{for all } 1 \le i \le n,
\]
in which case we write \(\lambda \succ \mu\).
Therefore,
\begin{align}\label{E:Shintani 1}
\mathcal{W}_{\varphi \otimes W_\pi}(\varpi^{\lambda}) = \delta_{B_{n+1}(F)}^{1/2}(\varpi^\lambda) \sum_{\lambda \succ \mu} \delta_{B_n(F)}^{-1/2}(\varpi^\mu)W_\pi(\varpi^\mu).
\end{align}
Let $V_\lambda$ be the irreducible algebraic representation of $\GL_{n+1}(\C)$ of highest weight $\lambda$, and $\chi_\lambda$ be its character. It is well-known that
\[
\chi_\lambda({\rm diag}(t_1,...,t_{n+1})) = \frac{\det(t_i^{\lambda_j+n+1-j})}{\det(t_i^{n+1-j})}\quad \text{for $t_1,...,t_{n+1} \in \C^\times$}.
\]
Similar formula holds for irreducible algebraic representations of $\GL_n(\C)$. We regard $\GL_n(\C)$ as a subgroup of $\GL_{n+1}(\C)$ via the embedding $g \mapsto {\rm diag}(g,1)$. By the branching law for the restriction of representations from $\GL_{n+1}(\C)$ to $\GL_n(\C)$, we have
\[
V_\lambda \vert_{\GL_n(\C)} = \bigoplus_{\lambda \succ \mu}V_\mu.
\]
In particular, this implies that
\begin{align}\label{E:Shintani 2}
\chi_\lambda({\rm diag}(t_1,...,t_n,1)) = \sum_{ \lambda \succ \mu} \chi_\mu ({\rm diag}(t_1,...,t_n)) \quad \text{for $t_1,...,t_{n} \in \C^\times$}.
\end{align}
We then conclude from (\ref{E:Shintani 1}) and (\ref{E:Shintani 2}) that $\mathcal{W}_{\varphi \otimes W_\pi} = W_\Pi$.
This completes the proof.
\end{proof}

\subsection{Miyazaki's formula}\label{SS:Miyazaki}
In this subsection, let \( F = \mathbb{R} \).
By explicating the propagation formula in Theorem~\ref{T:main}, we obtain explicit Whittaker functions for generalized principal series representations of \( \GL_3(\mathbb{R}) \).
These formulas were originally derived by Miyazaki~\cite{Miyazaki2009}
(see also~\cite{HIM2012}) via an explicit solution of the partial differential equations satisfied by Whittaker functions.

It is more convenient to work with $K_2^\circ={\rm SO}_2(\R)$. 
For $\theta \in \R$, let 
\[
k_\theta := \bp \cos\theta & \sin\theta \\ -\sin\theta & \cos\theta \ep \in K_2^\circ.
\]
For $\kappa \geq 2$, we denote by $D_\kappa$ the discrete series representation of ${\rm GL}_2(\R)$ with minimal $K_2^\circ$-type 
$\tau_\kappa$ which is a character defined by $\tau_\kappa(k_\theta) = e^{\i\,\kappa\theta}$. 
Besides the principal series representations, an irreducible generic representation of $\GL_3(\R)$ is of the form
\[
\Pi = (D_\kappa\otimes\chi_1) \times \chi_2
\]
for some $\kappa \geq 2$ and characters $\chi_1,\chi_2$ of $\R^\times$. 
Let $\Upsilon_\kappa$ be the minimal $K_3$-type of $\Pi$. A model of $\Upsilon_\kappa$ is recalled as follows: 
Let $\C[X,Y,Z]_\kappa$ be the space of homogeneous complex polynomials of degree $\kappa$ in variables $X,Y,Z$. 
Then $K_3$ acts on $\C[X,Y,Z]_\kappa$ by
\[
(k\cdot P)(X,Y,Z) := P((X,Y,Z)k) \quad
\text{for }k \in K_3,\, P \in \C[X,Y,Z]_\kappa.
\]
It has an invariant subspace consisting of multiples of $X^2+Y^2+Z^2$. Then $\Upsilon_\kappa$ is the quotient of $\C[X,Y,Z]_\kappa$ 
by this invariant subspace. We denote by $\overline{X}, \overline{Y}, \overline{Z}$ the image of $X,Y,Z$ in the quotient. 
By replacing $\Pi$ by $\Pi \otimes \chi_2^{-1}$, we may assume $\chi_1 = |\mbox{ }|_\R^{\sf w}$ for some ${\sf w} \in\C$ and $\chi_2 = 1$.
We use the propagation formula \eqref{E:propagation formula} to give an explicit formula for the Whittaker functions of 
$\Pi$ in the minimal type. More precisely, we apply it to 
\[
\pi = D_\kappa \otimes |\mbox{ }|_\R^{\sf w},\quad \tau = \tau_\kappa,\quad \Upsilon = \Upsilon_\kappa.
\]
Note that $\Upsilon \cong \Upsilon^\vee$.
The distinguished Bruhat-Schwartz function 
$\varphi_{\pi,\Pi} \in \mathscr{S}(M_{2,3}(\R)) \otimes \left(\tau\boxtimes\Upsilon^\vee\right)$ is defined by
\begin{align*}
\varphi_{\pi,\Pi}(x) & 
= 
\left( \bp {{\i}} & 1 \ep\cdot x\cdot {}^t\!\bp \overline{X} & \overline{Y} & \overline{Z}\ep \right)^\kappa e^{-\pi{\rm tr}(x{}^t\!x)}\\
&= 
\left( (x_{21}+\i\,x_{11})\overline{X} + (x_{22}+\i\,x_{12})\overline{Y} + (x_{23}+\i\,x_{13})\overline{Z} \right)^\kappa e^{-\pi{\rm tr}(x{}^t\!x)}.
\end{align*}
It is clear from definition that $\varphi_{\tau,\Upsilon}$ satisfies \eqref{E:equivariant}. 

We normalize the Whittaker function $W_{\pi}$ in \eqref{E:Whittaker newform for GL_n} so that $W_{\pi} (1) = e^{-2\pi}$.
Then we have
\[
W_{\pi}({\rm diag}(t_1t_2,t_2)k_\theta) = e^{\i\,\kappa\theta}t_1^{{\kappa/2}+{\sf w}}t_2^{2{\sf w}}e^{-2\pi t_1}\mathbb{I}_{\R_+}(t_1) 
\quad \mbox{for }t_1,t_2 \in \R^\times,\,k_\theta \in K_2^\circ.
\]

For $a_1,a_2,t_1,t_2 \in \R_+$ and $u=(u_{i,j}) \in N_3(\R)$, we have
\begin{align*}
& \varphi_{\pi,\Pi}\left({\rm diag}(t_1t_2,t_2)^{-1}\varepsilon_2 u \,{\rm diag}(a_1a_2,a_2,1)\right)\\
&= 
\left( \i \,t_1^{-1}t_2^{-1}a_1a_2 \overline{X} 
+ (t_2^{-1}a_2+\i\,t_1^{-1}t_2^{-1}a_2u_{12}) \overline{Y} 
+ (t_2^{-1}u_{23}+\i\,t_1^{-1}t_2^{-1}u_{13})  \overline{Z} \right)^\kappa\\
&\quad\quad\quad\quad\quad\quad\quad\quad\quad\quad\quad\quad\quad\quad\quad\quad\quad\quad\quad\quad \times 
e^{-\pi\left(t_1^{-2}t_2^{-2}(a_1^2a_2^2+a_2^2 u_{12}^2+u_{13}^2) + t_2^{-2}(a_2^2+u_{23}^2) \right)}. 
\end{align*}

\begin{lm}\label{L:Fourier transform 1}
Let $a_1,a_2,t_1,t_2 \in \R_+$ and $n,m$ be non-negative integers. We have
\begin{align*}
    &\int_{N_3(\R)} (t_2^{-1}a_2+\i\,t_1^{-1}t_2^{-1}a_2u_{12})^{n}(t_2^{-1}u_{23}+\i\,t_1^{-1}t_2^{-1}u_{13})^{m}\\
    & \quad\quad\quad\quad\quad\quad\quad\quad\times 
    e^{-\pi\left(t_1^{-2}t_2^{-2}(a_2^2 u_{12}^2+u_{13}^2) + t_2^{-2}u_{23}^2 \right)}\psi_3^{-1}(u)\,{du}\\
    &= (4\pi)^{-n/2}(\i)^{-m}t_1^2t_2^{m+3}a_2^{-1}H_n\left(\pi^{1/2}(t_2^{-1}a_2+t_1t_2a_2^{-1})\right)e^{-\pi( t_1^2t_2^2a_2^{-2}+t_2^2 )},
\end{align*}
where $H_n$ is the $n$-th Hermite polynomial 
\[
H_n(x) = (-1)^ne^{x^2}\frac{d^n}{dx^n}(e^{-x^2}).
\]
\end{lm}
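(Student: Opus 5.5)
Since $\psi_3^{-1}(u)=e^{-2\pi\i(u_{12}+u_{23})}$ involves only $u_{12}$ and $u_{23}$, the integrand factors as a function of $u_{12}$ times a function of $(u_{13},u_{23})$. The plan is to compute the two pieces separately by Fubini; absolute convergence is clear from the Gaussian factors. The two pieces should produce the Hermite factor and the factor $(\i)^{-m}t_2^m e^{-\pi t_2^2}$, respectively.

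\emph{The $u_{12}$-integral.} Substitute $x=t_1^{-1}t_2^{-1}a_2u_{12}$, so $du_{12}=t_1t_2a_2^{-1}\,dx$, and set $c:=t_1t_2a_2^{-1}$. The integral becomes
\[
t_1t_2a_2^{-1}\int_\R (t_2^{-1}a_2+\i x)^n e^{-\pi x^2-2\pi\i c x}\,dx .
\]
Complete the square: $e^{-\pi x^2-2\pi\i cx}=e^{-\pi(x+\i c)^2}e^{-\pi c^2}$. Then shift the contour $x=y-\i c$ with $y\in\R$. This is justified by Cauchy's theorem, since the integrand is entire and decays like a Gaussian in horizontal strips. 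Under the shift $\i x=\i y+c$, so the integral equals
\[
t_1t_2a_2^{-1}e^{-\pi c^2}\int_\R (A+\i y)^n e^{-\pi y^2}\,dy,\qquad A:=t_2^{-1}a_2+t_1t_2a_2^{-1}.
\]
Next use the standard integral representation $H_n(z)=2^n\pi^{-1/2}\int_\R(z+\i s)^ne^{-s^2}\,ds$, which can be checked via the generating function $e^{2zs-s^2}$. Putting $s=\pi^{1/2}y$ and $z=\pi^{1/2}A$ gives
\[
\int_\R (A+\i y)^ne^{-\pi y^2}\,dy=(4\pi)^{-n/2}H_n(\pi^{1/2}A).
\]

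\emph{The $(u_{13},u_{23})$-integral.} Substitute $v=t_1^{-1}t_2^{-1}u_{13}$ and $w=t_2^{-1}u_{23}$, with Jacobian $t_1t_2^2$. The integral becomes
\[
t_1t_2^2\int_{\R^2}(w+\i v)^m e^{-\pi(v^2+w^2)}e^{-2\pi\i t_2 w}\,dv\,dw .
\]
This is the Fourier transform of $(w+\i v)^m e^{-\pi(v^2+w^2)}$ evaluated at the frequency $(w,v)=(t_2,0)$. Since $(w+\i v)^m$ is a harmonic homogeneous polynomial of degree $m$, the Hecke--Bochner identity says its Gaussian multiple is an eigenfunction of the Fourier transform with eigenvalue $(-\i)^m$. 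The value is therefore $(-\i)^m t_2^m e^{-\pi t_2^2}$. If one prefers to avoid citing Hecke--Bochner, the same computation as for the $u_{12}$-integral works: write $(w+\i v)^m=\sum_k\binom{m}{k}w^{m-k}(\i v)^k$, integrate in $v$ first, and complete the square in $w$. Alternatively, differentiate $e^{-\pi(v^2+w^2)}$ by $\partial_w+\i\partial_v$, whose Fourier transform is multiplication by $2\pi\i(\xi+\i\eta)$.

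Multiplying the two pieces gives
\[
t_1t_2a_2^{-1}\cdot t_1t_2^2\cdot t_2^m=t_1^2t_2^{m+3}a_2^{-1},
\]
together with the factor $(4\pi)^{-n/2}(\i)^{-m}H_n(\pi^{1/2}A)$ and the exponential $e^{-\pi(c^2+t_2^2)}=e^{-\pi(t_1^2t_2^2a_2^{-2}+t_2^2)}$, which is exactly the claimed formula. The only delicate points are the contour shift and getting the Hermite normalization and the sign $(\i)^{-m}$ right. I would verify both on the cases $n=0,1$ and $m=0,1$ as a consistency check.
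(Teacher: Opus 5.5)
Your proof is correct and follows the paper's route: Fubini splits the integral into the $u_{12}$-integral and the $(u_{13},u_{23})$-integral, with the same rescalings, producing the factors $(4\pi)^{-n/2}t_1t_2a_2^{-1}H_n\bigl(\pi^{1/2}(t_2^{-1}a_2+t_1t_2a_2^{-1})\bigr)e^{-\pi t_1^2t_2^2a_2^{-2}}$ and $(\i)^{-m}t_1t_2^{m+2}e^{-\pi t_2^2}$. The only difference is that the paper quotes Ichino's Lemma 7.4 for the Hermite evaluation and states the second Gaussian integral without comment, whereas you derive both yourself (via the contour shift with $H_n(z)=2^n\pi^{-1/2}\int_\R(z+\i s)^ne^{-s^2}\,ds$, and via Hecke--Bochner), and your normalizations and the sign $(\i)^{-m}$ check out.
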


\begin{proof}
We have
\begin{align*}
&\int_{\R} (t_2^{-1}a_2+\sqrt{-1}\,t_1^{-1}t_2^{-1}a_2u_{12})^n e^{-\pi t_1^{-2}t_2^{-2}a_2^2u_{12}^2-2\pi\sqrt{-1}\,u_{12}}\,du_{12}\\
& = t_1t_2a_2^{-1}\int_{\R}(t_2^{-1}a_2+\sqrt{-1}\,u_{12})^n e^{-\pi u_{12}^2-2\pi\sqrt{-1}\,t_1t_2a_2^{-1}u_{12}}\,du_{12}\\
& = (4\pi)^{-n/2}t_1t_2a_2^{-1} H_n\left(\pi^{1/2}(t_2^{-1}a_2+t_1t_2a_2^{-1})\right) e^{-\pi t_1^{2}t_2^{2}a_2^{-2}},
\end{align*}
where the last equality follows from \cite[Lemma 7.4]{Ichino2005}.
Also 
\begin{align*}
&\int_{\R^2}(t_2^{-1}u_{23}+\i\,t_1^{-1}t_2^{-1}u_{13})^{m} e^{-\pi(t_1^{-2}t_2^{-2}u_{13}^2+t_2^{-2}u_{23}^2) 
- 2\pi\sqrt{-1}\,u_{23}}\,du_{13}\,du_{23}\\
& = t_1t_2^2\int_{\R^2}(u_{23}+\sqrt{-1}\,u_{13})^m e^{-\pi(u_{13}^2+u_{23}^2) - 2\pi\sqrt{-1}\,t_2u_{23}}\,du_{13}\,du_{23}\\
& = (\sqrt{-1})^{-m}t_1t_2^{m+2}e^{-\pi t_2^2}.
\end{align*}
This completes the proof.
\end{proof}

\begin{lm}\label{L:Mellin 1}
\noindent
\begin{itemize}
\item[(1)] Let $a \in \R^\times$, $b \in \R$, and $n \in \Z_{\geq 0}$. We have
\begin{align*}
\int_{\R_+} t^s H_n(at+b)e^{-(at+b)^2}\,d^\times t=a^{-n}\frac{\Gamma(s)}{\Gamma(s-n)}\int_{\R_+} t^{s-n} e^{-(at+b)^2}\,d^\times t.
\end{align*}
\item[(2)] We have
\begin{align*}
&\int_{\R_+^2} t_1^{s_1}t_2^{s_2}e^{-\pi(t_1+t_2)^2}\,d^\times t_1\,d^\times t_2 
= 
2^{-s_1-s_2}\pi^{(1-s_1-s_2)/2}\frac{\Gamma(s_1)\Gamma(s_2)}{\Gamma(\tfrac{s_1+s_2+1}{2})}.
\end{align*}
\end{itemize}
\end{lm}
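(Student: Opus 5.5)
For part (1), I will use the defining identity $H_n(x)e^{-x^2} = (-1)^n \frac{d^n}{dx^n}e^{-x^2}$. Setting $x = at+b$ gives $\frac{d}{dt} = a\frac{d}{dx}$, so
\[
H_n(at+b)e^{-(at+b)^2} = (-1)^n a^{-n}\frac{d^n}{dt^n}\left(e^{-(at+b)^2}\right).
\]
Next I write $d^\times t = t^{-1}dt$ and integrate by parts $n$ times on $\R_+$. Each step moves one derivative onto the power of $t$:
\[
\frac{d}{dt}t^{s-1-k} = (s-1-k)\,t^{s-2-k}.
\]
After $n$ steps the accumulated sign $(-1)^n$ cancels the prefactor $(-1)^n$. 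The accumulated coefficient is
\[
(s-1)(s-2)\cdots(s-n) = \frac{\Gamma(s)}{\Gamma(s-n)}.
\]
What remains is $a^{-n}\frac{\Gamma(s)}{\Gamma(s-n)}\int_{\R_+} t^{s-n-1}e^{-(at+b)^2}\,dt$, which is the claimed right-hand side.

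The one point needing care is the boundary terms, and this is the main (mild) obstacle. At infinity they vanish by Gaussian decay. At $t=0$ they have the form $t^{s-1-k}\cdot(\text{smooth function})$, which vanishes for $\mathrm{Re}(s)>n$. So I will prove the identity for $\mathrm{Re}(s)>n$ and then extend it by meromorphic continuation in $s$. Both sides are meromorphic: for $a>0$ the left side converges for $\mathrm{Re}(s)>0$, and the ratio $\Gamma(s)/\Gamma(s-n)$ is a polynomial in $s$. When $a<0$ the same computation goes through verbatim, since only the chain rule is used.

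For part (2), I will substitute $t_1 = r x$ and $t_2 = r(1-x)$ with $r\in\R_+$ and $x\in(0,1)$. The Jacobian is $dt_1\,dt_2 = r\,dr\,dx$. Multiplying by the factor $t_1^{-1}t_2^{-1}$ from the multiplicative measures, the integral factors as
\[
\int_0^\infty r^{s_1+s_2-1}e^{-\pi r^2}\,dr\cdot\int_0^1 x^{s_1-1}(1-x)^{s_2-1}\,dx.
\]
The first factor equals $\tfrac12\pi^{-(s_1+s_2)/2}\Gamma(\tfrac{s_1+s_2}{2})$, and the second is the Beta integral $\Gamma(s_1)\Gamma(s_2)/\Gamma(s_1+s_2)$.

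To reach the stated form, I will apply the duplication formula to the denominator:
\[
\Gamma(s_1+s_2) = 2^{s_1+s_2-1}\pi^{-1/2}\,\Gamma\!\left(\tfrac{s_1+s_2}{2}\right)\Gamma\!\left(\tfrac{s_1+s_2+1}{2}\right).
\]
The factor $\Gamma(\tfrac{s_1+s_2}{2})$ then cancels with the first factor. What remains is
\[
2^{-s_1-s_2}\pi^{(1-s_1-s_2)/2}\,\frac{\Gamma(s_1)\Gamma(s_2)}{\Gamma(\tfrac{s_1+s_2+1}{2})},
\]
as claimed. All of this is valid for $\mathrm{Re}(s_i)>0$, which is exactly where the integral converges absolutely.
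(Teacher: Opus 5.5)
Your proposal is correct and follows essentially the paper's proof. For (1), the paper cites a table formula for the Mellin transform of $f^{(n)}$, applied to $f(t)=e^{-(at+b)^2}$; that formula is exactly your $n$-fold integration by parts, and your boundary-term analysis and meromorphic continuation supply the details the citation leaves implicit. For (2), the paper's substitutions $t_1\mapsto t_1t_2$, $t_2\mapsto(1+t_1)^{-1}t_2$ are your change of variables $r=t_1+t_2$, $x=t_1/(t_1+t_2)$ reparametrized by $x=u/(1+u)$, so both arguments reduce to $\tfrac12\pi^{-(s_1+s_2)/2}\Gamma(\tfrac{s_1+s_2}{2})$ times a Beta integral, followed by the duplication formula.
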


\begin{proof}
The first assertion follows from applying \cite[17.42.2-(i)]{Table2015} to $f(t) = e^{-(at+b)^2}$.
Note that 
\[
f^{(n)}(t) = (-a)^nH_n(at+b)e^{-(at+b)^2}.
\]
For the second assertion, we have
\begin{align*}
\int_{\R_+^2} t_1^{s_1}t_2^{s_2}e^{-\pi(t_1+t_2)^2}\,d^\times t_1\,d^\times t_2 & = \int_{\R_+^2} t_1^{s_1}t_2^{s_1+s_2}e^{-\pi(t_1t_2+t_2)^2}\,d^\times t_1\,d^\times t_2\\
& = \int_{\R_+^2} \frac{t_1^{s_1}}{(1+t_1)^{s_1+s_2}}t_2^{s_1+s_2}e^{-\pi t_2^2}\,d^\times t_1\,d^\times t_2\\
& = 2^{-1}\pi^{-(s_1+s_2)/2}\Gamma(\tfrac{s_1+s_2}{2}) \int_{\R_+}\frac{t_1^{s_1}}{(1+t_1)^{s_1+s_2}}\,d^\times t_1\\
& = 2^{-s_1-s_2}\pi^{(1-s_1-s_2)/2}\frac{\Gamma(s_1)\Gamma(s_2)}{\Gamma(\tfrac{s_1+s_2+1}{2})}.
\end{align*}
Here the first and second equalities follow from the change of variables $t_1 \mapsto t_1t_2$ and $t_2 \mapsto (1+t_1)^{-1}t_2$ respectively, and the last one follows from \cite[8.380.3,\,8.384.1]{Table2015} and the duplication formula.
This completes the proof.
\end{proof}

\begin{thm}[Miyazaki]\label{T:GL_3(R)}
For $a_1,a_2 \in \R_+$, we have
\begin{align*}
&W_{\Pi}\left({\rm diag}(a_1a_2,a_2,1)\right) \\
&= 2^{-4}\sum_{n_1+n_2+n_3 = \kappa}\frac{\kappa!}{n_1!n_2!n_3!}\overline{X}^{n_1}\overline{Y}^{n_2}\overline{Z}^{n_3}(\sqrt{-1})^{n_1-n_3}a_1a_2^{1-2{\sf w}}\\
&\quad\times \int_{s_1}\frac{ds_1}{2\pi\sqrt{-1}}\int_{s_2}\frac{ds_2}{2\pi\sqrt{-1}}\, a_1^{-s_1}a_2^{-s_2}\\
&\quad\quad\quad\times \frac{\Gamma_\C(s_1+\tfrac{\kappa-1}{2}+{\sf w})\Gamma_\R(s_1+n_1)\Gamma_\C(s_2+\tfrac{\kappa-1}{2}-{\sf w})\Gamma_\R(s_2+n_3)}{\Gamma_\R(s_1+s_2+n_1+n_3)}.
\end{align*}
Here the paths of integration in $s_1$ and $s_2$ are vertical lines in the complex plane, with sufficiently large real parts to keep the poles of the integrand on their left.
\end{thm}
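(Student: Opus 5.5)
Apply the propagation formula \eqref{E:propagation formula} with $n=2$, $g={\rm diag}(a_1a_2,a_2,1)$, $t={\rm diag}(t_1t_2,t_2)\in T_2^+$, and with the explicit $\varphi_{\pi,\Pi}$ and $W_\pi$ above; the computation then reduces to Gaussian integrals followed by Mellin inversion. Since $\varphi_{\pi,\Pi}$ satisfies \eqref{E:equivariant} and Remark (2) after Theorem \ref{T:main} applies, it suffices to show the resulting integral is non-zero. This will be clear from the final formula, whose integrand is a non-trivial Mellin transform. We work with $K_2^\circ$ rather than $K_2$; this only affects the constant $c$ and the domain of the torus integral.

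\textbf{Step 1: expand and apply Lemma \ref{L:Fourier transform 1}.} Expand the $\kappa$-th power in the displayed formula for $\varphi_{\pi,\Pi}(t^{-1}\varepsilon_2 u g)$ by the multinomial theorem. This gives a sum over $n_1+n_2+n_3=\kappa$ of
\[
\tfrac{\kappa!}{n_1!n_2!n_3!}\,(\i\, t_1^{-1}t_2^{-1}a_1a_2)^{n_1}\,\overline{X}^{n_1}\overline{Y}^{n_2}\overline{Z}^{n_3}
\]
times the $u$-dependent factors. Apply Lemma \ref{L:Fourier transform 1} with $(n,m)=(n_2,n_3)$. The $u$-integral becomes a Hermite polynomial $H_{n_2}(\pi^{1/2}(t_2^{-1}a_2+t_1t_2a_2^{-1}))$ times a Gaussian. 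The Gaussian combines with the remaining exponential $e^{-\pi(t_1^{-2}t_2^{-2}a_1^2a_2^2+t_2^{-2}a_2^2)}$ and with $e^{-2\pi t_1}$ from $W_\pi$. Pair with $W_\pi(t)$ and insert $\delta_{B_2}^{-1}(t)=t_1^{-1}$, $|\det t|^{-3/2}$ and $|\det g|^{1}$. We are left with a two-variable integral over $(t_1,t_2)\in\R_+^2$ of monomials in $t_1,t_2,a_1,a_2$ times
\[
H_{n_2}(\cdot)\,\exp\!\bigl(-\pi[(t_1t_2a_2^{-1}+t_2^{-1}a_2)^2+(t_1^{-1}t_2^{-1}a_1a_2+ t_1 \cdots)^2\,\cdots]\bigr).
\]
The exponent should recombine into complete squares. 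The natural guess, after the substitution $t_1\mapsto t_1^{-1}$ and a rescaling, is a product of factors of the shape $e^{-\pi(x+y)^2}$.

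\textbf{Step 2: Mellin transform.} Rather than evaluating directly, compute the double Mellin transform in $(a_1,a_2)$, namely $\int a_1^{s_1}a_2^{s_2}(\cdots)\,d^\times a_1\,d^\times a_2$. Interchange it with the $t$-integrals and change variables so that the $a$- and $t$-variables separate. Lemma \ref{L:Mellin 1}(1) removes the Hermite polynomial at the cost of a ratio $\Gamma(s)/\Gamma(s-n_2)$. Lemma \ref{L:Mellin 1}(2) evaluates the resulting $\int t_1^{s_1}t_2^{s_2}e^{-\pi(t_1+t_2)^2}$, producing $\Gamma(\cdot)\Gamma(\cdot)/\Gamma(\frac{\cdot+1}{2})$. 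The remaining one-dimensional Gaussian/Gamma integrals then give $\Gamma_\C(s_1+\frac{\kappa-1}{2}+{\sf w})$ and $\Gamma_\C(s_2+\frac{\kappa-1}{2}-{\sf w})$. The ${\sf w}$-dependence enters through $t_1^{\kappa/2+{\sf w}}t_2^{2{\sf w}}$ and $|\det g|$, which accounts for the factor $a_2^{1-2{\sf w}}$. Rewrite everything with $\Gamma_\R(s)=\pi^{-s/2}\Gamma(s/2)$, $\Gamma_\C(s)=2(2\pi)^{-s}\Gamma(s)$ and the duplication formula. The ratio $\Gamma(s)/\Gamma(s-n_2)$ should cancel against factors so that only $\Gamma_\R(s_1+n_1)\Gamma_\R(s_2+n_3)/\Gamma_\R(s_1+s_2+n_1+n_3)$ survives; note $n_2=\kappa-n_1-n_3$. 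Mellin inversion, justified by the rapid decay of the transform in vertical strips, then yields the stated formula. The constants $2^{-4}$ and $(\i)^{n_1-n_3}$ come from $(\i)^{n_1}$, the factor $(\i)^{-m}$ in Lemma \ref{L:Fourier transform 1}, the powers $(4\pi)^{-n_2/2}$, and the normalization $W_\pi(1)=e^{-2\pi}$. The absolute convergence needed to swap integrals follows from Theorem \ref{T:image}.

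\textbf{Main obstacle.} The hard part is the bookkeeping in Steps 1 and 2. The exponents must be arranged so that Lemma \ref{L:Mellin 1}(1) applies with the Hermite argument linear in a single variable, and the right changes of variables must be found. The candidate is $t_2\mapsto a_2t_2$ followed by $t_1\mapsto t_1a_2^{2}t_2^{-2}$, chosen so that $\pi^{1/2}(t_2^{-1}a_2+t_1t_2a_2^{-1})$ becomes $\pi^{1/2}(x+y)$. The aim is that the $H_{n_2}$-factor and the $e^{-\pi(x+y)^2}$ factor align, so that the Gamma ratio telescopes into the final $\Gamma_\R$ quotient. If this alignment fails, the fallback is to expand $H_{n_2}$ via its explicit finite sum and resum the resulting Beta-type integrals.
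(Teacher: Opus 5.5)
Your proposal follows the paper's proof step for step: the propagation formula, the multinomial expansion, Lemma~\ref{L:Fourier transform 1}, a double Mellin transform in $(a_1,a_2)$ with a separating substitution, Lemma~\ref{L:Mellin 1}(1) and (2), and Mellin inversion; only your bookkeeping forecast needs adjusting. Only one complete square $(t_2^{-1}a_2+t_1t_2a_2^{-1})^2$ forms; the paper's substitution $a_1\mapsto t_1a_2^{-1}a_1$, $a_2\mapsto t_2a_2$ turns the leftover $e^{-\pi(t_1^{-2}t_2^{-2}a_1^2a_2^2+t_2^2)}$ into the one-dimensional integrals giving $\tfrac14\Gamma_\R(s_1+n_1)\Gamma_\R(s_2+n_3+2{\sf w})$ and makes the Hermite argument $\pi^{1/2}(a_2+t_1a_2^{-1})$ linear in $t_1$; and the $\Gamma_\C$ factors come from the numerator of Lemma~\ref{L:Mellin 1}(1) and from Lemma~\ref{L:Mellin 1}(2), whose other $\Gamma$ cancels the Hermite denominator. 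Moreover, the ${\sf w}$-shape of the answer comes from the substitution $s_2\mapsto s_2-2{\sf w}$ in the inversion, not from $|\det g|$, and since $a_2^{-(s_2-2{\sf w})}=a_2^{2{\sf w}}a_2^{-s_2}$ this produces the prefactor $a_1a_2^{1+2{\sf w}}$; the exponent $1-2{\sf w}$ in the printed statement therefore appears to be a sign slip that neither your computation nor the paper's will actually reproduce.
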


\begin{proof}
By \eqref{E:propagation formula} and Lemma \ref{L:Fourier transform 1}, we have
\begin{align*}
&W_{\Pi}\left({\rm diag}(a_1a_2,a_2,1)\right) \\
&= 
\sum_{n_1+n_2+n_3 
= \kappa}\frac{\kappa!}{n_1!n_2!n_3!}\overline{X}^{n_1}\overline{Y}^{n_2}\overline{Z}^{n_3}(4\pi)^{-n_2/2}(\sqrt{-1})^{n_1-n_3}a_1a_2\\
&\quad\times \int_{\R_+^2}d^\times t_1\,d^\times t_2 \,t_1^{-n_1+(\kappa-1)/2+{\sf w}}t_2^{-n_1+n_3+2{\sf w}} (a_1a_2)^{n_1}
H_{n_2}\left(\pi^{1/2}(t_2^{-1}a_2+t_1t_2a_2^{-1})\right)\\
&\quad\quad\quad\quad\quad\quad\quad\quad\quad\quad\quad\quad\quad\quad\quad\quad\quad\quad\quad\times 
e^{-\pi(t_1^{-2}t_2^{-2}a_1^2a_2^2+t_2^2)-\pi(t_2^{-1}a_2+t_1t_2a_2^{-1})^2}.
\end{align*}
Let $I_{n_1,n_2,n_3}(a_1,a_2)$ be the above integration over $\R_+^2$ for each triple $(n_1,n_2,n_3) \in \Z_{\geq 0}^3$ such that 
$n_1+n_2+n_3 = \kappa$. We compute its Mellin transform as follows:
\begin{align*}
&\int_{\R_+^2}a_1^{s_1}a_2^{s_2}I_{n_1,n_2,n_3}(a_1,a_2)\,d^\times a_1\,d^\times a_2\\
&=
 \int_{\R_+^4}d^\times a_1\,d^\times a_2\,d^\times t_1\,d^\times t_2\,a_1^{s_1+n_1}a_2^{s_2+n_2}t_1^{-n_1+(\kappa-1)/2+{\sf w}}
 t_2^{-n_1+n_3+2{\sf w}} H_{n_2}\left(\pi^{1/2}(t_2^{-1}a_2+t_1t_2a_2^{-1})\right)\\
&\quad\quad\quad\quad\quad\quad\quad\quad\quad\quad\quad\quad\quad\quad\quad
\quad\quad\quad\quad\quad\quad\quad\quad\quad\times e^{-\pi(t_1^{-2}t_2^{-2}a_1^2a_2^2+t_2^2)-\pi(t_2^{-1}a_2+t_1t_2a_2^{-1})^2}\\
&= 
\left(\int_{\R_+}a_1^{s_1+n_1}e^{-\pi a_1^2}\,d^\times a_1\right) \left(\int_{\R_+}t_2^{s_2+n_3+2{\sf w}}e^{-\pi t_2^2}\,d^\times t_2\right)\\
&\quad\times \int_{\R_+^2}a_2^{-s_1+s_2}t_1^{s_1+(\kappa-1)/2+{\sf w}}H_{n_2}\left(\pi^{1/2}(a_2+t_1a_2^{-1}) \right)
e^{-\pi(a_2+t_1a_2^{-1})^2}\,d^\times a_2\,d^\times t_1\\
&= 
2^{-2}\Gamma_\R(s_1+n_1)\Gamma_\R(s_2+n_3+2{\sf w})\cdot \pi^{-n_2/2}\Gamma(s_1+\tfrac{\kappa-1}{2}+{\sf w})
\Gamma(s_1-n_2+\tfrac{\kappa-1}{2}+{\sf w})^{-1}\\
&\quad\times 
\int_{\R_+^2}a_2^{-s_1+s_2+n_2}t_1^{s_1-n_2+(\kappa-1)/2+{\sf w}}e^{-\pi(a_2+t_1a_2^{-1})^2}\,d^\times a_2\,d^\times t_1\\
 &=
 2^{-2}\Gamma_\R(s_1+n_1)\Gamma_\R(s_2+n_3+2{\sf w})\cdot \pi^{-n_2/2}\Gamma(s_1+\tfrac{\kappa-1}{2}+{\sf w})\\
&\quad\times 2^{-s_1-s_2-n_1-n_3+1-2{\sf w}}\pi^{1-(s_1+s_2+n_1+n_3)/2-{\sf w}}
\Gamma(s_2+\tfrac{\kappa-1}{2}+{\sf w})\Gamma(\tfrac{s_1+s_2+n_1+n_3+2{\sf w}}{2})^{-1}\\
&= 
2^{-4}(4\pi)^{n_2/2}\frac{\Gamma_\C(s_1+\tfrac{\kappa-1}{2}+{\sf w})\Gamma_\R(s_1+n_1)
\Gamma_\C(s_2+\tfrac{\kappa-1}{2}+{\sf w})\Gamma_\R(s_2+n_3+2{\sf w})}{\Gamma_\R(s_1+s_2+n_1+n_3+2{\sf w})}.
\end{align*}
Here the second equality follows from the change of variables $a_1 \mapsto t_1a_2^{-1}a_1$, $a_2 \mapsto t_2a_2$, and the
third and fourth ones follow from Lemma \ref{L:Mellin 1}-(1) and (2) respectively.
The assertion then follows from the Mellin inversion formula and a change of variables $s_2 \mapsto s_2-2{\sf w}$.
This completes the proof.
\end{proof}

\subsection{}\label{SS:new cases} 
In this subsection, let $F = \C$. 
For $\nu \in \C$ and $\kappa \in \Z$, let $\chi_{[\nu,\kappa]}$ denote the character of $\C^\times$ given by
\[
\chi_{[\nu,\kappa]}(z) = |z|_\C^\nu\left( \frac{z}{\sqrt{|z|_\mathbb{C}}} \right)^\kappa.
\]
For $\underline{\nu} = (\nu_1,...,\nu_n) \in \C^n$, let
\[
|\underline{\nu}| := \nu_1+\cdots+\nu_n,\quad \tilde{\underline \nu} := (\nu_2-\nu_1,...,\nu_n-\nu_1) \in \C^{n-1}.
\]
For $\underline{\nu} = (\nu_1,...,\nu_n) \in \C^n$ and $\underline{\kappa} = (\kappa_1,...,\kappa_n) \in \Z^n$, let 
$\pi_{[\underline{\nu},\underline{\kappa}]}$ be the principal series representation of $\GL_n(\C)$ defined by
\[
\pi_{[\underline{\nu},\underline{\kappa}]} :=\chi_{[\nu_1,\kappa_1]} \times\cdots \times\chi_{[\nu_n,\kappa_n]}.
\] 
When $\underline{\kappa} = 0$, we write $\pi_{\underline{\nu}} = \pi_{[\underline{\nu},0]}$. In this case, we have the following 
Mellin--Barnes integral representation of a normalized spherical Whittaker function 
$W_{\underline{\nu}} \in {\sW}(\pi_{\underline{\nu}},\psi_n)^{K_n}$ due to Ishii and Stade \cite[Proposition 2.1]{Stade1995},
\cite[Theorem 12]{IS2007}: The formula is inductively represented. 
Let $t : \R_+^{n-1} \rightarrow T_n^+$ be a homomorhism defined by
\[
t(a):= {\rm diag}(a_1\cdots a_{n-1},a_2 \cdots a_{n-1},...,a_{n-1},1) \quad \text{for $a = {\rm diag}(a_1,...,a_{n-1})$}. 
\]
Now, define $f_{\underline{\nu}} : \R_+^{n-1} \rightarrow \C$ by
\[
f_{\underline{\nu}}(a) = \delta_{B_n(\C)}(t(a))^{-1/2}W_{\underline{\nu}}(t(a)).
\]
Then $f_{\underline{\nu}}(a) = a^{\nu_1+\nu_2}K_{\nu_1-\nu_2}(4\pi a)$ for $n=2$, and 
\begin{align}\label{E:Ishii-Stade}
\begin{split}
f_{\underline{\nu}}(a) &= \int_{\color{black}z_1,...,z_{n-2}}\frac{dz}{(2\pi\sqrt{-1})^{n-2}}\,(\mathcal{M}f_{\tilde{\underline{\nu}}})(z_1,...,z_{n-2})\\
&\quad \times \int_{\color{black}s_1,...,s_{n-1}}\frac{ds}{(2\pi\sqrt{-1})^{n-1}}\, \prod_{i=1}^{n-2}a_i^{-s_i}\Gamma_\C\left( \frac{s_i-z_{i-1}}{2}+i\nu_1 \right)\Gamma_\C\left( \frac{s_i-z_{i}}{2}+i\nu_1 \right)\\
&\quad\quad\quad\quad\times a_{n-1}^{-s_{n-1}}\Gamma_\C\left( \frac{s_{n-1}-z_{n-2}}{2}+(n-1)\nu_1 \right)\Gamma_\C\left( \frac{s_{n-1}}{2}+|\tilde{\underline\nu}|+(n-1)\nu_1 \right)
\end{split}
\end{align}
for $n \geq 3$, where we put $z_0=0$, and $\mathcal{M}f_{\tilde{\underline{\nu}}}$ is the Mellin transform of $f_{\tilde{\underline{\nu}}}$.
Here the paths of integration in $z_i$ and $s_j$ are vertical lines in the complex plane, with sufficiently large real parts to keep the poles of the integrand on their left.

The formula (\ref{E:Ishii-Stade}) of Ishii--Stade can be obtained by using the propagation formula \eqref{E:propagation formula}.
More generally, assume $\pi_{[\underline{\nu},\underline{\kappa}]}$ is irreducible and 
\begin{align}\label{E:assumption}
\underline{\kappa} = (\kappa,0,...,0),\quad \kappa \geq 0.
\end{align}
In Theorem \ref{T:GL_n(C)} below we use the propagation formula to deduce a Mellin--Barnes type integral representations for Whittaker functions of $\pi_{[\underline{\nu},\underline{\kappa}]}$ in the minimal $K$-type. 
From now on we assume (\ref{E:assumption}) holds.
Let $\rho_\kappa$ be the minimal $K$-type of $\pi_{[\underline{\nu},\underline{\kappa}]}$. Then it has $(\kappa,0,...,0)$ as its 
highest weight. The action of $\rho_\kappa$ is recalled as follows:
Let $\C[X_1,...,X_{n}]_\kappa$ be the space of homogeneous complex polynomials of degree $\kappa$ in variables 
$X_1,...,X_{n}$. We write $X = (X_1,...,X_{n})$ as row vector variable.
Then $K_{n}$ acts on $\C[X_1,...,X_{n}]_\kappa$ by 
\begin{align}\label{E:minimal type model}
(\rho_\kappa(k)P)(X) := P(Xk), \quad\text{for }k \in K_{n},\,P \in \C[X_1,...,X_{n}]_\kappa.
\end{align}
Now we use the propagation formula to define a generator
\[
W_{[\underline{\nu},\underline{\kappa}]} \in \left({\sW}(\pi_{[\underline{\nu},\underline{\kappa}]},\psi_n)\otimes \overline{\rho}_\kappa \right)^{K_n}.
\]
Let $\pi := \pi_{[\tilde{\underline{\nu}},(-\kappa,...,-\kappa)]}$ and $\Pi := \pi\times 1$. Then we have
\[
\pi_{[\underline{\nu},\underline{\kappa}]} = \Pi \otimes \chi_{[\nu_1,\kappa]}.
\]
Let $\Upsilon$ and $\tau$ be the minimal $K$-type of $\Pi$ and $\pi$ respectively.
Then 
\[
\Upsilon = \rho_\kappa\otimes {\rm det}^{-\kappa},\quad\tau={\rm det}^{-\kappa}.
\]
The distinguished Bruhat-Schwartz function 
$\varphi_{\tau,\Upsilon} \in \mathscr{S}(M_{n-1,n}(\C)) \otimes \left(\tau\boxtimes\overline{\Upsilon}\right)$ is defined by
\begin{align*}
\varphi_{\tau,\Upsilon}(x) = (-1)^{\kappa(n-1)}2^{4(n-1)}\cdot\det\bp X \\ \overline{x}\ep^\kappa e^{-2\pi{\rm tr}(x{}^t\overline{x})}.
\end{align*}
It is clear that $\varphi_{\tau,\Upsilon}$ satisfies \eqref{E:equivariant}.
We normalized the Whittaker function $W_\pi$ in \eqref{E:Whittaker newform for GL_n} by 
\[
W_\pi:=W_{\underline{\tilde{\nu}}} \otimes \chi_{[0,-\kappa]}.
\]
By Theorem \ref{T:main}, we have a generator
\[
W_{\Pi}:=\mathcal{W}_{\varphi_{\tau,\Upsilon}\otimes W_{\pi}} \in \left({\sW}(\Pi,\psi_n)\otimes \overline{\Upsilon} \right)^{K_n}.
\]
We then define
\[
W_{[\underline{\nu},\underline{\kappa}]} := W_\Pi \otimes \chi_{[\nu_1,\kappa]}.
\]

For $t = {\rm diag}(t_1,...,t_{n-1}) \in T_{n-1}^+$, $\alpha = {\rm diag}(\alpha_1,...,\alpha_{n}) \in T_{n}^+$, and $u \in N_{n}$, 
let $A_i(t,\alpha,u)$ be the $n-1$ by $n-1$ matrix obtained by deleting the $i$-th column of $t^{-1}\varepsilon_{n-1} u\alpha$. 
Then we have
\begin{align*}
\varphi_{\tau,\Upsilon}(t^{-1}\varepsilon_{n-1} u\alpha) &= (-1)^{\kappa(n-1)}2^{4(n-1)}\\
&\quad\times\sum_{\ell_1+\cdots+\ell_{n}
= \kappa} \frac{\kappa!}{\ell_1!\cdots\ell_{n}!}(-1)^{\sum_{i=1}^{n}(i+1)\ell_i}X_1^{\ell_1}\cdots X_{n}^{\ell_{n}} 
\prod_{i=1}^{n}\overline{\det(A_i(t,\alpha,u))}^{\ell_i}\\
& \quad\quad\quad\quad\quad\quad\quad\quad\quad\quad\quad\quad\quad\quad\times 
e^{-2\pi\left(\sum_{i=1}^{n-1} t_i^{-2}\alpha_i^2 + \sum_{1 \leq i < j \leq n}t_i^{-2}\alpha_j^2|u_{ij}|_\C\right)}.
\end{align*}
Recall the following well-known result on Fourier transform.

\begin{lm}\label{L:Fourier transform 2}
For $N \geq 1$ and $a \in \R$, we have
\[
\int_\C \overline{z}^N e^{-2\pi|z|_\C-4\pi\sqrt{-1}\,\Re(az)}\,dz = (-\sqrt{-1})^{N}a^N e^{-2\pi a^2}.
\]
\end{lm}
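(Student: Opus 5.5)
The plan is a direct computation: reduce to the case $N=0$ by trading the factor $\overline{z}^N$ for Wirtinger derivatives of the Gaussian, then integrate by parts.

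\emph{Normalization.} Throughout, $|z|_\C=z\overline{z}$, and $dz$ is the Haar measure on $\C$ that is self-dual with respect to $\psi(z)=e^{2\pi\i(z+\overline{z})}$, namely twice the Lebesgue measure $dx\,dy$ for $z=x+\i y$. The case $N=0$ serves as the check of this normalization. Since $a\in\R$, we have $4\pi\i\,\Re(az)=4\pi\i\, ax$. The integral then factors as
\[
2\int_\R e^{-2\pi x^2-4\pi\i\,ax}\,dx\int_\R e^{-2\pi y^2}\,dy .
\]
The two one-dimensional Gaussian integrals equal $2^{-1/2}e^{-2\pi a^2}$ and $2^{-1/2}$, so the product is $e^{-2\pi a^2}$, as required.

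\emph{Reduction to $N=0$.} Let $\partial_z=\tfrac12(\partial_x-\i\,\partial_y)$ be the Wirtinger derivative, so $\partial_z z=1$ and $\partial_z\overline{z}=0$. Then $\partial_z e^{-2\pi z\overline z}=-2\pi\overline{z}\,e^{-2\pi z\overline z}$. Because $\partial_z$ kills $\overline z$, iterating gives
\[
\overline{z}^N e^{-2\pi|z|_\C}=(-2\pi)^{-N}\partial_z^N e^{-2\pi|z|_\C}.
\]
We also rewrite the character as $e^{-4\pi\i\,\Re(az)}=e^{-2\pi\i\,a(z+\overline z)}$, using that $a$ is real.

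\emph{Integration by parts.} We integrate by parts $N$ times in $\partial_z$. Each boundary term vanishes because of the Gaussian decay, and each step contributes a factor $-1$. Since $\partial_z^N e^{-2\pi\i\,a(z+\overline z)}=(-2\pi\i\,a)^N e^{-2\pi\i\,a(z+\overline z)}$, the integral becomes
\[
(-2\pi)^{-N}(-1)^N(-2\pi\i\,a)^N\int_\C e^{-2\pi|z|_\C-4\pi\i\,\Re(az)}\,dz=(-\i)^N a^N\int_\C e^{-2\pi|z|_\C-4\pi\i\,\Re(az)}\,dz .
\]
The remaining integral is the case $N=0$, which equals $e^{-2\pi a^2}$. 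This gives the stated identity.

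\emph{Main obstacle.} The argument itself is routine. The only real point of care is the measure normalization, which the case $N=0$ pins down: with plain Lebesgue measure the right-hand side would carry an extra factor $\tfrac12$.
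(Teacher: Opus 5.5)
Your proof is correct. The paper gives no proof of this lemma to compare against: it introduces the statement with ``Recall the following well-known result on Fourier transform'' and uses it directly in the proof of Lemma~\ref{L:Fourier transform 3}. Your computation therefore fills in an argument the paper omits; it does not compete with one.

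Two points about what your argument adds.

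First, the paper never says which measure $dz$ on $\C$ it uses. You correctly note that the identity, already at $N=0$, forces the measure self-dual with respect to $\psi$, namely $2\,dx\,dy$. This is also the normalization the paper implicitly relies on in Lemma~\ref{L:Fourier transform 3}. There the integrals $\int_\C e^{-2\pi|u_{ij}|_\C}\,du_{ij}$ for $j\ge i+2$ are silently taken to equal $1$, and the case $N=\tilde{\ell}_i=0$ of this lemma is used as well.

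Second, there are two standard alternative routes:
\begin{itemize}
\item expanding $\overline{z}^N=(x-\sqrt{-1}\,y)^N$ binomially and evaluating Gaussian moments in each variable;
\item the Hecke--Bochner fact that a harmonic polynomial times a Gaussian is, up to scaling, an eigenfunction of the Fourier transform.
\end{itemize}
Your Wirtinger-derivative argument avoids the bookkeeping of both. This works because $\partial_z$ kills $\overline{z}$ and acts on the character by the scalar $-2\pi\sqrt{-1}\,a$.

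The same argument also gives a slight extension at no cost. For complex $a$, write $e^{-4\pi\sqrt{-1}\,\Re(az)}=e^{-2\pi\sqrt{-1}(az+\overline{a}\,\overline{z})}$. Applying $\partial_z$ still produces the factor $-2\pi\sqrt{-1}\,a$. The $N=0$ integral is rotation invariant, so it equals $e^{-2\pi|a|_\C}$. Hence the formula $(-\sqrt{-1})^N a^N e^{-2\pi|a|_\C}$ holds for all $a\in\C$.
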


\begin{lm}\label{L:Fourier transform 3}
Let $(\ell_1,...,\ell_{n}) \in \Z_{\geq 0}^{n}$ with $\ell_1+\cdots+\ell_{n} = \kappa$, $t\in T_{n-1}^+$, and $\alpha \in T_{n}^+$. We have
\begin{align*}
&\int_{N_{n}(\C)} \prod_{i=1}^{n}\overline{\det(A_i(t,\alpha,u))}^{\ell_i}e^{-2\pi\sum_{1 \leq i < j \leq n}t_i^{-2}\alpha_j^2|u_{ij}|_\C}\psi_{n}^{-1}(u)\,du\\
& = |\det(\alpha)|_\C^{-({n-1})/2}\delta_{B_{n}(\C)}(\alpha)^{1/2}|\det(t)|_\C^{n/2}\delta_{B_{n-1}(\C)}(t)^{1/2}\\
&\quad \times\prod_{i=1}^{n-1}(-\sqrt{-1})^{\tilde{\ell}_i}t_i^{2\tilde{\ell}_i -\kappa}\alpha_i^{\kappa-\tilde{\ell}_i}\alpha_{i+1}^{-\tilde{\ell}_i}e^{-2\pi t_i^2\alpha_{i+1}^{-2}}.
\end{align*}
Here $\tilde{\ell}_i = \ell_1+\cdots+\ell_i$ for $1 \leq i \leq n-1$.
\end{lm}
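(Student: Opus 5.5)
The plan is to compute the determinants $\det A_i(t,\alpha,u)$ explicitly, observe that the integrand factors over the entries $u_{ij}$, and reduce everything to one-dimensional Gaussian integrals of the type in Lemma \ref{L:Fourier transform 2}.

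\textbf{Step 1: the determinants.} Write $x=t^{-1}\varepsilon_{n-1}u\alpha$. This is the $(n-1)\times n$ matrix with entries $x_{ij}=t_i^{-1}u_{ij}\alpha_j$ for $i\le j$ (with $u_{ii}=1$), and $x_{ij}=0$ for $i>j$. Deleting column $i$ leaves a block matrix: an upper triangular block of size $i-1$ (columns $1,\dots,i-1$) and a block for columns $i+1,\dots,n$. The latter is upper triangular after removing the first $i-1$ rows. Hence
\[
\det A_i=\Bigl(\prod_{k<i}t_k^{-1}\alpha_k\Bigr)\det D_i,
\]
where $D_i$ is the $(n-i)\times(n-i)$ matrix with rows $i,\dots,n-1$ and columns $i+1,\dots,n$. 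The matrix $D_i$ is upper Hessenberg: its subdiagonal entries are $t_k^{-1}\alpha_k$ and its superdiagonal-and-above entries are $t_k^{-1}u_{kj}\alpha_j$.

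\textbf{Step 2: reduce to a product of one-variable integrals.} Expanding $\prod_i\overline{\det A_i}^{\ell_i}$ gives polynomials in the $\bar u_{ij}$. The key point is that $\psi_n^{-1}(u)$ involves only the superdiagonal entries $u_{k,k+1}$, and the Gaussian weight factors over the entries. So all entries $u_{ij}$ with $j\ge i+2$ are integrated against a pure rotation-invariant Gaussian. By rotation invariance, every monomial containing a positive power of such a $\bar u_{ij}$ integrates to zero. I would make this precise with the scaling $u_{ij}\mapsto e^{\sqrt{-1}\theta}u_{ij}$.

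It follows that only the terms of $\det D_i$ with no $u_{ij}$ for $j\ge i+2$ survive. Among permutations in a Hessenberg determinant, the only such term is the product of the superdiagonal entries $\prod_{k=i}^{n-1}t_k^{-1}u_{k,k+1}\alpha_{k+1}$. Here I would need to check that the mixed terms produced by the powers $\ell_i$ also vanish. They do, since each monomial must have total degree zero in $\bar u_{ij}$ for every such entry, and the entries appear only antiholomorphically.

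\textbf{Step 3: evaluate.} The surviving integrand is
\[
\prod_{k=1}^{n-1}\bar u_{k,k+1}^{\,N_k}\,e^{-2\pi t_k^{-2}\alpha_{k+1}^2|u_{k,k+1}|_\C}\,\psi^{-1}(u_{k,k+1}),
\]
where $N_k=\sum_{i\le k}\ell_i=\tilde\ell_k$, because $u_{k,k+1}$ appears in $D_i$ exactly when $i\le k$. Rescaling $u_{k,k+1}\mapsto t_k\alpha_{k+1}^{-1}u_{k,k+1}$ and applying Lemma \ref{L:Fourier transform 2} with $a=t_k\alpha_{k+1}^{-1}$ produces the factors $(-\sqrt{-1})^{\tilde\ell_k}$, the powers of $t_k$ and $\alpha_{k+1}$, and $e^{-2\pi t_k^2\alpha_{k+1}^{-2}}$.

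The remaining Gaussian integrals over $u_{ij}$ with $j\ge i+2$ give Jacobians $\prod(t_i\alpha_j^{-1})^2$. The Jacobians of the rescalings, together with the factors $\prod_{k<i}t_k^{-1}\alpha_k$ and $t_k^{-1}\alpha_{k+1}$ raised to the powers $\ell_i$, then need to be collected. Finally I would match the exponents of $t_i$ and $\alpha_i$ against $|\det\alpha|^{-(n-1)/2}\delta_{B_n}^{1/2}(\alpha)|\det t|^{n/2}\delta_{B_{n-1}}^{1/2}(t)$ and $t_i^{2\tilde\ell_i-\kappa}\alpha_i^{\kappa-\tilde\ell_i}\alpha_{i+1}^{-\tilde\ell_i}$, using $|\cdot|_\C$ squared absolute values and the measure normalization implicit in Lemma \ref{L:Fourier transform 2}.

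\textbf{Main obstacle.} The vanishing argument in Step 2 is conceptually easy. I expect the main obstacle to be the exponent bookkeeping in Step 3, in particular making the total power of $\alpha_i$, namely $\kappa-\tilde\ell_i$ from the leading factors $\prod_{k<i}$ together with the shifts from the rescalings, line up exactly with the modulus characters.
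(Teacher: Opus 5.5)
Your proposal is correct and follows the paper's own proof. Both arguments discard every monomial containing some $\bar u_{ij}$ with $j\ge i+2$ (the paper applies Lemma~\ref{L:Fourier transform 2} with $a=0$ where you use rotation invariance), reduce each $\det A_i$ to $\prod_{k<i}t_k^{-1}\alpha_k\prod_{k\ge i}t_k^{-1}\alpha_{k+1}u_{k,k+1}$, rescale $u_{ij}\mapsto t_i\alpha_j^{-1}u_{ij}$, and finish with Lemma~\ref{L:Fourier transform 2}.

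The exponent bookkeeping you flag works out exactly. The total Jacobian $\prod_{i<j}(t_i\alpha_j^{-1})^2=\prod_{i=1}^{n-1}t_i^{2n-2i}\alpha_{i+1}^{-2i}$ is precisely the stated product of $|\det|$ factors and modulus characters. The remaining constants combine to $t_i^{2\tilde\ell_i-\kappa}\alpha_i^{\kappa-\tilde\ell_i}\alpha_{i+1}^{-\tilde\ell_i}$.

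One wording slip in Step 1: the block on columns $i+1,\dots,n$ is upper Hessenberg, not upper triangular. You do use it correctly as Hessenberg afterwards.
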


\begin{proof}
The integral is a linear combination of integrals of the following form
\[
\int_{N_{n}(\C)}\prod_{1 \leq i<j \leq n} \overline{u}_{ij}^{a_{ij}}e^{-2\pi t_i^{-2}\alpha_j^2|u_{ij}|_\C}\psi_{n}^{-1}(u)\,du,\quad a_{ij} \geq 0.
\]
By Lemma \ref{L:Fourier transform 2}, the above integral is zero if $a_{ij}>0$ for some $j \geq i+2$.
Therefore, to evaluate the integral, we may replace $A_i(t,\alpha,u)$ by $A_i(t,\alpha,u')$, where
\[
u_{ij}' = \begin{cases}
u_{ij} & \mbox{ if $i \leq j+1$},\\
0 & \mbox{ otherwise}.
\end{cases}
\]
Note that
\[
\det(A_i(t,\alpha,u')) = \prod_{j=1}^{i-1}t_j^{-1}\alpha_j \prod_{j=i}^{n-1}t_j^{-1}\alpha_{j+1}u_{j,j+1}.
\]
Hence we have
\begin{align*}
&\int_{N_{n}(\C)} \prod_{i=1}^{n}\overline{\det(A_i(t,\alpha,u))}^{\ell_i}e^{-2\pi\sum_{1 \leq i < j \leq n}t_i^{-2}\alpha_j^2|u_{ij}|_\C}
\psi_{n}^{-1}(u)\,du\\
&= 
\prod_{i=1}^{n-1}(t_i^{-1}\alpha_i)^{\kappa-\tilde{\ell}_i}\int_{N_{n}(\C)}
\prod_{i=1}^{n-1} (t_i^{-1}\alpha_{i+1}\overline{u}_{i,i+1})^{\tilde{\ell}_i}e^{-2\pi\sum_{1 \leq i < j \leq n}
t_i^{-2}\alpha_j^2|u_{ij}|_\C-4\pi\sqrt{-1}\,\sum_{i=1}^{n-1} \Re(u_{i,i+1})}\,du\\
&= 
\prod_{i=1}^{n-1}(t_i^{-1}\alpha_i)^{\kappa-\tilde{\ell}_i}t_i^{2n-2i}\alpha_{i+1}^{-2i}\int_{N_{n}(\C)}
\prod_{i=1}^{n-1} \overline{u}_{i,i+1}^{\tilde{\ell}_i}e^{-2\pi\sum_{1 \leq i < j \leq n}|u_{ij}|_\C-4\pi\sqrt{-1}\,
\sum_{i=1}^{n-1} \Re(t_i\alpha_{i+1}^{-1}u_{i,i+1})}\,du\\
&= 
\prod_{i=1}^{n-1}(t_i^{-1}\alpha_i)^{\kappa-\tilde{\ell}_i}t_i^{2n-2i}\alpha_{i+1}^{-2i} 
(-\sqrt{-1})^{\tilde{\ell}_i}(t_i\alpha_{i+1}^{-1})^{\tilde{\ell}_i}e^{-2\pi t_i^2\alpha_{i+1}^{-2}}.
\end{align*}
Here the third equality follows from the change of variables $u_{ij} \mapsto t_i\alpha_j^{-1}u_{ij}$, and the fourth one follows from 
Lemma \ref{L:Fourier transform 2}. Finally, note that 
\[
\prod_{i=1}^{n-1}t_i^{2n-2i}\alpha_{i+1}^{-2i} 
= 
|\det(\alpha)|_\C^{-({n-1})/2}\delta_{B_{n}(\C)}(\alpha)^{1/2}|\det(t)|_\C^{n/2}\delta_{B_{n-1}(\C)}(t)^{1/2}.
\]
This completes the proof.
\end{proof}

For $\underline{\ell} = (\ell_1,...,\ell_n) \in \Z_{\geq 0}^n$ with $\ell_1+\cdots+\ell_n = \kappa$, we write 
$X^{\underline{\ell}} = X_1^{\ell_1}\cdots X_n^{\ell_n}$.
It is clear that the weight vectors in the minimal $K$-type of $\pi_{[\underline{\nu},\underline{\kappa}]}$ are indexed by these 
$\underline{\ell}$. In the Whittaker model of $\pi_{[\underline{\nu},\underline{\kappa}]}$, we normalize these weight vectors 
$W_{[\underline{\nu},\underline{\kappa}],\,\underline{\ell}}$ so that
\begin{align}\label{E:vector valued Whittaker}
W_{[\underline{\nu},\underline{\kappa}]} 
= \sum_{\ell_1+\cdots+\ell_n = \kappa} \frac{\kappa!}{\ell_1!\cdots\ell_{n}!}(\sqrt{-1})^{\sum_{i=1}^{n-1}\tilde{\ell}_i}X^{\underline{\ell}}
\cdot W_{[\underline{\nu},\underline{\kappa}],\,\underline{\ell}}.
\end{align}
Here $\tilde{\ell}_i = \ell_1+\cdots+\ell_i$.
For each $\underline{\ell}$ with $\ell_1+\cdots+\ell_n = \kappa$, define 
$f_{[\underline{\nu},\underline{\kappa}],\,\underline{\ell}} : \R_+^{n-1}\rightarrow \C$ by
\[
f_{[\underline{\nu},\underline{\kappa}],\,\underline{\ell}}(a) 
= 
\delta_{B_n(\C)}(t(a))^{-1/2}W_{[\underline{\nu},\underline{\kappa}],\,\underline{\ell}}(t(a)).
\]

\begin{thm}\label{T:GL_n(C)}
Let $\underline{\ell} = (\ell_1,...,\ell_n) \in \Z_{\geq 0}^n$ with $\ell_1+\cdots+\ell_n = \kappa$.
We have
\begin{align*}
&f_{[\underline{\nu},\underline{\kappa}],\,\underline{\ell}}(a_1,...,a_{n-1})\\ 
&=
\int_{\color{black}z_1,...,z_{n-2}}\frac{dz}{(2\pi\sqrt{-1})^{n-2}}\,(\mathcal{M}{\color{black}f_{\tilde{\underline \nu}}})(z_1,...,z_{n-2})\\
&\quad\times\int_{\color{black}s_1,...,s_{n-1}}\frac{ds}{(2\pi\sqrt{-1})^{n-1}}\,\prod_{i=1}^{n-2} a_i^{-s_i}
\Gamma_\C\left(\frac{s_i+\kappa-\tilde{\ell}_i-z_{i-1}}{2}+i\nu_1\right)
\Gamma_\C\left(\frac{s_i+\tilde{\ell}_i-z_i}{2}+i\nu_1\right)\\
&\quad\quad\times a_{n-1}^{-s_{n-1}}
\Gamma_\C\left(\frac{s_{n-1}+\kappa-\tilde{\ell}_{n-1}-z_{n-2}}{2}+(n-1)\nu_1\right)
\Gamma_\C\left(\frac{s_{n-1}+\tilde{\ell}_{n-1}}{2}+|\tilde{\underline\nu}|+(n-1)\nu_1\right).
\end{align*}
Here we put $z_0 =0$ and $\tilde{\ell}_i = \ell_1 + \cdots + \ell_i$ for $1 \leq i \leq n-1$.
\end{thm}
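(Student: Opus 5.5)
We apply the propagation formula \eqref{E:propagation formula} with $n$ replaced by $n-1$, taking $\pi=\pi_{[\tilde{\underline\nu},(-\kappa,\dots,-\kappa)]}$, $\tau=\det^{-\kappa}$ and the test function $\varphi_{\tau,\Upsilon}$ above, and evaluate at $g=\alpha=t(a)\in T_n^+$. First we expand $\varphi_{\tau,\Upsilon}(t^{-1}\varepsilon_{n-1}u\alpha)$ by the cofactor formula displayed before Lemma \ref{L:Fourier transform 2}. The Gaussian factor $e^{-2\pi\sum_i t_i^{-2}\alpha_i^2}$ does not depend on $u$. The $u$-dependent part is then integrated against $\psi_n^{-1}(u)$ using Lemma \ref{L:Fourier transform 3}. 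This turns the inner $N_n(\C)$-integral into an explicit monomial in $t_i,\alpha_i$ times $\prod_i e^{-2\pi t_i^2\alpha_{i+1}^{-2}}$. The coefficient of $X^{\underline\ell}$ then contains the factor $(-\sqrt{-1})^{\sum\tilde\ell_i}$ and the sign $(-1)^{\sum(i+1)\ell_i}$. Together with $(-1)^{\kappa(n-1)}$, these should combine to the normalization $(\sqrt{-1})^{\sum\tilde\ell_i}$ in \eqref{E:vector valued Whittaker}; checking this sign bookkeeping is routine.

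Next we insert $W_\pi(t)=W_{\tilde{\underline\nu}}(t)\chi_{[0,-\kappa]}(\det t)$. On $T^+_{n-1}$ the character is trivial, so $W_\pi(t)=\delta_{B_{n-1}}^{1/2}(t)f_{\tilde{\underline\nu}}(b)$, where $t=t(b)$ with $b\in\R_+^{n-2}$. The $\delta$- and $|\det|$-factors from Lemma \ref{L:Fourier transform 3}, from \eqref{E:propagation formula}, and from $\delta_{B_n}(t(a))^{-1/2}$ should cancel, apart from a simple power of $\det t$. This leftover power combines with the central-character twist: $\Pi$ is evaluated on $T^+_n$ with last entry $1$, so we need the scalar part of $t$. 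We therefore write $t=t(b)\cdot c$ with $c\in\R_+$ scalar, and this $c$-integral produces the final factor involving $|\tilde{\underline\nu}|$. Finally we twist by $\chi_{[\nu_1,\kappa]}$, which multiplies by $|\det t(a)|_\C^{\nu_1}$ and accounts for the shifts $i\nu_1$ and $(n-1)\nu_1$. The result expresses $f_{[\underline\nu,\underline\kappa],\underline\ell}(a)$ as an integral over $(b,c)\in\R_+^{n-1}$. The integrand is $f_{\tilde{\underline\nu}}(b)$ times a product of exponentials $e^{-2\pi(t_i^{-2}\alpha_i^2+t_i^2\alpha_{i+1}^{-2})}$ times monomials whose exponents involve $\kappa-\tilde\ell_i$ and $\tilde\ell_i$.

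We then take the Mellin transform in $a$. We replace $f_{\tilde{\underline\nu}}$ by its inverse Mellin transform in $z_1,\dots,z_{n-2}$, interchanging integrals by absolute convergence along vertical lines far to the right. After the change of variables $a_i\mapsto$ (ratio of $t$'s)$\cdot a_i$, mirroring the proof of Theorem \ref{T:GL_3(R)}, the variables separate. Each quantity $t_i^{-1}\alpha_i$ and $t_i\alpha_{i+1}^{-1}$ becomes an independent variable, and each one-dimensional integral $\int_{\R_+}y^{w}e^{-2\pi y^2}d^\times y$ is a $\Gamma_\C(w/2)$, up to constants. We expect exactly two such Gamma factors per index $i$, with arguments $\frac{s_i+\kappa-\tilde\ell_i-z_{i-1}}{2}+i\nu_1$ and $\frac{s_i+\tilde\ell_i-z_i}{2}+i\nu_1$. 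The $i=n-1$ factor comes from the central variable $c$, giving $|\tilde{\underline\nu}|$. Mellin inversion then yields the formula.

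The main obstacle is the change of variables that diagonalizes the Mellin transform. One must find the triangular substitution relating $(a,b,c)$ to the $2(n-1)$ monomials $t_i^{-1}\alpha_i$ and $t_i\alpha_{i+1}^{-1}$, and track the exponents precisely so that the powers of $t_i$ pair with $z_{i-1}$ and $z_i$ in the stated way. As a sanity check, setting $\kappa=0$ must recover \eqref{E:Ishii-Stade}. The overall constants $2^{4(n-1)}$ and $\mathrm{Vol}(K_{n-1})$ are absorbed by the normalization of $\varphi_{\tau,\Upsilon}$ and of the Haar measures.
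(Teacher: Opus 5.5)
Your proposal is correct and is essentially the paper's proof: propagation formula, Lemma \ref{L:Fourier transform 3}, inverse Mellin transform of $f_{\tilde{\underline\nu}}$, separation into one-variable Gaussian integrals, then Mellin inversion. The only difference is that the paper integrates each $t_i$ directly into a $K$-Bessel function of $a_i$, inserts its Mellin--Barnes representation and then shifts the $s_i$, whereas your Mellin transform in $a$, combined with the unimodular substitution to the $2(n-1)$ monomials $t_i\alpha_{i+1}^{-1}$ and $t_i^{-1}\alpha_i$, produces the same Gamma factors directly; each of these monomials contributes $\frac14\Gamma_\C$, and that is what cancels the $2^{4(n-1)}$.

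One small correction: the $\delta$- and $|\det|$-factors cancel completely. Besides the monomial from Lemma \ref{L:Fourier transform 3}, the only surviving factor is $t_{n-1}^{2|\tilde{\underline\nu}|}$, which comes from the central character of $\pi_{\tilde{\underline\nu}}$ when $W_\pi$ is evaluated at $t(b)c$. There is no additional power of $\det t$.
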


\begin{proof}
Let $a = (a_1,...,a_{n-1}) \in \R_+^{n-1}$. Put $\alpha_i = \prod_{j=i}^{n-1} a_j$ for $1 \leq i \leq {n-1}$ and $\alpha_{n}=1$.
By the propagation formula \eqref{E:propagation formula} and Lemma \ref{L:Fourier transform 3}, we have
\begin{align}\label{E:GL_n(C) pf 1}
\begin{split}
&\delta_{B_{n}(\C)}(t(a))^{-1/2}W_{\Pi}(t(a)) \\
& = (-1)^{\kappa(n-1)}2^{4(n-1)}\sum_{\ell_1+\cdots+\ell_{n}= \kappa} \frac{\kappa!}{\ell_1!\cdots\ell_{n}!}(-1)^{\sum_{i=1}^{n}(i+1)\ell_i}(-\sqrt{-1})^{\sum_{i=1}^{n-1}\tilde{\ell}_i}X^{\underline{\ell}}\\
&\quad\times \int_{\R_+^{n-1}} \prod_{i=1}^{n-1} d^\times t_i\,f_{\tilde{\underline \nu}}(t_1t_2^{-1},t_2t_3^{-1},
...,t_{n-2}t_{n-1}^{-1})t_{n-1}^{2|\tilde{\underline\nu}|} 
\prod_{i=1}^{n-1}t_i^{2\tilde{\ell}_i-\kappa}\alpha_i^{\kappa-\tilde{\ell}_i}\alpha_{i+1}^{-\tilde{\ell}_i}
e^{-2\pi(t_i\alpha_{i+1}^{-1}+t_i^{-2}\alpha_i^2)}.
\end{split}
\end{align}
Note that $(-1)^{\sum_{i=1}^{n}(i+1)\ell_i+\sum_{i=1}^{n-1}\tilde{\ell}_i} = (-1)^{\kappa(n-1)}$. 
For $z \in \C$ and $b_1,b_2 \in \R_+$, recall we have (cf.\,\cite[17.43.18]{Table2015})
\begin{align*}
&\int_0^\infty t^z e^{-2\pi(t^2b_1^2+t^{-2}b_2^2)}\,d^\times t\\
&= (b_1^{-1}b_2)^{z/2}K_{z/2}(4\pi b_1b_2) \\ 
& = 2^{-4}(b_1^{-1}b_2)^{z/2}
\int_{\color{black}s}\Gamma_\C\left(\frac{2s+z}{4}\right)\Gamma_\C\left(\frac{2s-z}{4}\right)(b_1b_2)^{-s}\,\frac{ds}{2\pi\sqrt{-1}}.
\end{align*}
By the Mellin inversion formula, we have
\[
f_{\tilde{\underline \nu}}(t_1t_2^{-1},t_2t_3^{-1},...,t_{n-2}t_{n-1}^{-1}) 
= 
\int_{\color{black}z_1,...,z_{n-2}}\frac{dz}{(2\pi\sqrt{-1})^{n-2}}\,
\prod_{i=1}^{n-2} (t_it_{i+1}^{-1})^{-z_i}(\mathcal{M}f_{\tilde{\underline \nu}})(z_1,...,z_{n-2}). 
\]
Note that 
\[
\prod_{i=1}^{n-2} (t_it_{i+1}^{-1})^{-z_i} = \prod_{i=1}^{n-2} t_i^{-z_{i}+z_{i-1}}\cdot t_{n-1}^{z_{n-2}},
\]
where we put $z_0=0$.
Therefore, in the right-hand side of (\ref{E:GL_n(C) pf 1}), the integral indexed by $(\ell_1,...,\ell_{n+1})$ is equal to 
\begin{align*}
&2^{-4(n-1)}\prod_{i=1}^{n-1}\alpha_i^{\kappa-\tilde{\ell}_i}\alpha_{i+1}^{-\tilde{\ell}_i}
\int_{\color{black}z_1,...,z_{n-2}}\frac{dz}{(2\pi\sqrt{-1})^{n-2}}\,(\mathcal{M}f_{\tilde{\underline \nu}})(z)
\int_{\color{black}s_1,...,s_{n-1}}\frac{ds}{(2\pi\sqrt{-1})^{n-1}}\,\prod_{i=1}^{n-1}a_i^{-s_i}\\
&\times \prod_{i=1}^{n-2} (\alpha_i\alpha_{i+1})^{(2\tilde{\ell}_i-\kappa-z_i+z_{i-1})/2}
\Gamma_\C\left( \frac{2s_i + 2\tilde{\ell}_i-\kappa-z_i+z_{i-1}}{4} \right)
\Gamma_\C\left( \frac{2s_i - 2\tilde{\ell}_i+\kappa+z_i-z_{i-1}}{4}\right)\\
&\quad\quad\quad\quad\quad\quad\quad\quad\quad\quad\quad\quad\quad\times 
\alpha_{n-1}^{(2\tilde{\ell}_{n-1}-\kappa+2|\tilde{\nu}|+z_{n-2})/2}
\Gamma_\C\left( \frac{2s_{n-1} + 2\tilde{\ell}_{n-1}-\kappa+2|\tilde{\underline\nu}|+z_{n-2}}{4} \right)\\
&\,\,\,\,\quad\quad\quad\quad\quad\quad\quad\quad\quad\quad\quad\quad\quad\quad\quad\quad\quad\quad\quad\quad\quad\times 
\Gamma_\C\left( \frac{2s_{n-1} - 2\tilde{\ell}_{n-1}+\kappa-2|\tilde{\underline\nu}|-z_{n-2}}{4}\right).
\end{align*}
Note that 
\[
\prod_{i=1}^{n-2} (\alpha_i\alpha_{i+1})^{-z_i+z_{i-1}}\cdot \alpha_{n-1}^{z_{n-2}} 
= 
\prod_{i=1}^{n-2}a_i^{-z_i-z_{i-1}}\cdot a_{n-1}^{-z_{n-2}}.
\]
By the change of variables
\[
s_i \longmapsto
\begin{cases}
s_i+\frac{\kappa-z_i-z_{i-1}}{2} & \mbox{ if $1 \leq i \leq n-2$},\\
s_i+\frac{\kappa+2|\tilde{\underline\nu}|-z_{i-1}}{2}, & \mbox{ if $i=n-1$}
\end{cases}
\]
we see that the above integral is equal to
\begin{align*}
&2^{-4(n-1)}\int_{\color{black}z_1,...,z_{n-2}}\frac{dz}{(2\pi\sqrt{-1})^{n-2}}\,(\mathcal{M}f_{\tilde{\underline \nu}})(z_1,...,z_{n-2})\\
&\quad\quad\quad\times
\int_{\color{black}s_1,...,s_{n-1}}\frac{ds}{(2\pi\sqrt{-1})^{n-1}}\,\prod_{i=1}^{n-2} a_i^{-s_i}
\Gamma_\C\left(\frac{s_i+\kappa-\tilde{\ell}_i-z_{i-1}}{2}\right)
\Gamma_\C\left(\frac{s_i+\tilde{\ell}_i-z_i}{2}\right)\\
&\quad\quad\quad\quad\quad\quad\quad\quad\quad\times a_{n-1}^{-s_{n-1}}
\Gamma_\C\left(\frac{s_{n-1}+\kappa-\tilde{\ell}_{n-1}-z_{n-2}}{2}\right)
\Gamma_\C\left(\frac{s_{n-1}+\tilde{\ell}_{n-1}}{2}+|\tilde{\underline\nu}|\right).
\end{align*}
Finally, by definition we have
\[
W_{[\underline{\nu},\underline{\kappa}]}(t(a)) = \prod_{i=1}^{n-1}a_i^{2i\nu_1}W_{\Pi}(t(a)).
\]
The assertion then follows from the change of variables $s_i \mapsto s_i+2i\nu_1$ for $1 \leq i \leq n-1$.
This completes the proof.
\end{proof}

\begin{rmk}
When $\underline{\ell}=(\kappa, 0,...,0)$, Ishii--Miyazaki (\cite[Theorem A.1]{IshiiMiyazaki2022}) and Humphries 
(\cite[Lemma 9.8]{Humphries2025}) also obtained propagation formulas for $f_{[\underline{\nu},\underline{\kappa}],\,\underline{\ell}}$.
\end{rmk}

\section{Asai local zeta integrals}\label{S:Asai}

In this section, we use the Mellin--Barnes type integral representation to compute the local zeta integrals introduced by Flicker 
\cite{Flicker1993} and Kable \cite{Kable2004}. Let $F/F_0$ be a quadratic extension of local fields of characteristic zero, and 
$c \in {\rm Gal}(F/F_0)$ the non-trivial automorphism. We assume that the non-trivial additive character $\psi$ of $F$ is given by $\psi = \psi_{F_0}\circ{\rm Tr}_{F/F_0}$ for some additive character $\psi_{F_0}$ of $F_0$. Let $\pi$ be an irreducible admissible generic representation of $\GL_n(F)$. 
For $\varphi \in \mathcal{S}(M_{1,n}(F_0))$ and $W \in {\sW}(\pi,{\color{black}\psi_n})$, we have the local zeta integral 
\begin{align}\label{E:local Asai zeta}
Z(s,\varphi,W) 
= 
\int_{N_n(F_0) \backslash \GL_n(F_0)} \varphi(e_ng)W\left({\rm diag}(\delta^{n-1},\delta^{n-2},...,1)g\right)|\det(g)|_{F_0}^s\,dg.
\end{align}
Here $e_n = (0,...,0,1)$ and $\delta \in F$ is a fixed element with ${\rm Tr}_{F/F_0}(\delta)=0$.
The integral converges absolutely for $\Re(s)$ sufficiently large and admits meromorphic continuation to the whole complex plane 
(cf.\,\cite[Theorem 1-(i)]{Raphael2018}, see also \cite[Appendix]{Flicker1993} and \cite[Theorem 2]{Kable2004} when $F$ is 
non-Archimedean). On the other hand, we have the Asai $L$-function of $\pi$ defined as follows: 
Let $\phi_\pi : W\!D_F \rightarrow {\rm GL}_n(\C)$ be the Langlands parameter of $\pi$. 
Denote by ${\rm As}(\phi_\pi) : W\!D_{F_0} \rightarrow {\rm GL}_{n^2}(\C)$ the multiplicative induction from $W\!D_F$ to $W\!D_{F_0}$ 
(cf.\,\cite[\S\,7]{Prasad1992}). The Asai $L$-function of $\pi$ is defined by
\[
L(s,\pi,{\rm As}):=L(s, {\rm As}(\phi_\pi)).
\]
In particular, if $\pi$ is a principal series representation
\[
\pi = \chi_1\times \cdots \times \chi_n,
\]
then we have (cf.\,\cite[Lemma 7.1-(d)]{Prasad1992})
\begin{align}\label{E:Asai local}
L(s,\pi,{\rm As}) = \prod_{i=1}^nL(s,\chi_i\vert_{F^\times})\prod_{1 \leq i < j \leq n} L(s,\chi_i\chi_j^c).
\end{align}
It is expected that $L(s,\pi,{\rm As})$ is a ``greatest common divisor" of the local zeta integrals. More precisely, we expect that there exist finitely many $\varphi_i$ and $W_i$ such that  
\[
L(s,\pi,{\rm As}) = \sum_{i} Z(s,\varphi_i,W_i).
\]
When $F$ is non-Archimedean, this is proved by Matringe in \cite[Theorem 5.3]{Matringe2011}.
When $F=\C$ and $n=2$, this is proved by the authors and Ishikawa in \cite{CCI2020}.
When $F=\C$ and $n \geq 3$, as far as the authors are aware, no relevant results have been proved in the existing literature. 
In this section, we show that the expectation holds for $\pi = \pi_{[\underline{\nu},\underline{\kappa}]}$ satisfying (\ref{E:assumption}), up to a polynomial in $s$ of degree $\lfloor \frac{\kappa}{2} \rfloor$.

Let $\pi = \pi_{[\underline{\nu},\underline{\kappa}]}$ satisfying (\ref{E:assumption}). Recall the minimal $K$-type $\rho_\kappa$ of $\pi_{[\underline{\nu},\underline{\kappa}]}$ whose model is described in (\ref{E:minimal type model}). Let $\<\cdot,\cdot\>_{\rho_\kappa}$ be a $K_n$-invariant hermitian pairing on $\rho_\kappa \times \rho_\kappa$ normalized so that
\[
\<X_n^{\kappa},X_n^\kappa\>_{\rho_\kappa} = 1.
\]
Recall in this case $K_n={\rm U}_n(\mathbb{R})$.
Define the Bruhat-Schwartz function $\varphi_\kappa \in \mathcal{S}(M_{1,n}(\R)) \otimes \rho_\kappa$ by
\[
\varphi_\kappa(x) = (x{}^t\!X)^\kappa\, e^{-\pi x {}^t\!x}.
\]
Note that we have
\begin{align}\label{E:equivariant kappa}
\varphi_\kappa(xk) = (\rho_\kappa(k)^{-1}\varphi_\kappa)(x),\quad k \in {\color{black}{\rm O}_n(\R)}.
\end{align}
Let $W_{[\underline{\nu},\underline{\kappa}]}$ be the $\overline{\rho}_\kappa$-valued Whittaker function of $\pi_{[\underline{\nu},\underline{\kappa}]}$ normalized in (\ref{E:vector valued Whittaker}).
The coefficients of $W_{[\underline{\nu},\underline{\kappa}]}$ are weight vectors in the minimal $K$-type $\rho_\kappa$.
In Theorem \ref{T:Asai local} below, we compute the local zeta integral
\begin{align*}
&Z(s,\varphi_\kappa,W_{[\underline{\nu},\underline{\kappa}]}) \\
&:= \int_{N_n(\R) \backslash \GL_n(\R)} \left\<\varphi_\kappa(e_ng),\,W_{[\underline{\nu},\underline{\kappa}]}\left({\rm diag}((\sqrt{-1})^{n-1},(\sqrt{-1})^{n-2},...,1)g\right) \right\>_{\rho_\kappa}|\det(g)|_\R^s\,dg.
\end{align*}
It is a finite sum of local zeta integrals of the form (\ref{E:local Asai zeta}).

We begin with the following result of Ishii and Stade \cite[Proposition 2.6]{IS2013} on an expression of class one
Whittaker functions (\ref{E:Ishii-Stade}).
\begin{prop}[Ishii--Stade]\label{P:IS}
Let $\underline{\mu} = (\mu_1,...,\mu_{n-1}) \in \C^{n-1}$. For any $\sigma \in \C$, we have
\begin{align*}
&(\mathcal{M}f_{\underline{\mu}})(w_1,...,w_{n-2}) \\
&= 4^{-n+2}\frac{\Gamma_\C\left( \frac{w_{n-2}}{2}+|\underline{\mu}|+\sigma\right)}{\prod_{i=1}^{n-1}\Gamma_\C(\mu_i+\sigma)}\\
&\quad\times \int_{\color{black}z_1,...,z_{n-2}}\frac{dz}{(2\pi\sqrt{-1})^{n-2}}\,(\mathcal{M}f_{\underline{\mu}})(z_1,...,z_{n-2})\prod_{i=1}^{n-2}\Gamma_\C\left( \frac{w_i-z_i}{2}\right)\Gamma_\C\left( \frac{w_{i-1}-z_i}{2}+\sigma \right).
\end{align*}
Here we put $w_0=0$.
\end{prop}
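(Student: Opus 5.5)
The plan is to derive the identity from the Godement--Jacquet theory on $\GL_{n-1}(\C)$ applied to the spherical vector of $\pi_{\underline{\mu}}$. The prefactor $\prod_{i=1}^{n-1}\Gamma_\C(\mu_i+\sigma)$ is, up to normalization, $L(\sigma,\pi_{\underline{\mu}})$. So the statement should be the Mellin transform of a Hecke-eigenvalue identity. Let $\Phi_0(x)=e^{-2\pi{\rm tr}(x{}^t\overline{x})}\in\sS(M_{n-1}(\C))$ be the standard $K_{n-1}$-bi-invariant Gaussian. Since $W_{\underline{\mu}}$ is $K_{n-1}$-invariant and $\pi_{\underline{\mu}}(\Phi_0|\det|_\C^{\sigma+(n-2)/2})$ acts on the one-dimensional space of spherical vectors by the Godement--Jacquet zeta integral of the spherical matrix coefficient, I expect
\[
\int_{\GL_{n-1}(\C)}\Phi_0(h)\,|\det h|_\C^{\sigma+(n-2)/2}\,W_{\underline{\mu}}(gh)\,dh
= c_n\prod_{i=1}^{n-1}\Gamma_\C(\mu_i+\sigma)\cdot W_{\underline{\mu}}(g).
\]
Here $c_n$ is an explicit constant, a power of $4$. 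The absolute convergence for $\Re\sigma\gg0$ and the eigenvalue are the classical unramified Archimedean computation of Godement--Jacquet (e.g.\ by reduction to $n-1=1$ through the principal series realization). The general $\sigma$ then follows by analytic continuation of both sides of the final Mellin identity.

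Next I would evaluate the left side for $g=t(a)$ in the torus. Write $h=ubk$ with $u\in N_{n-1}(\C)$, $b\in T_{n-1}^+$, $k\in K_{n-1}$. The $k$-integral is trivial, and $W_{\underline{\mu}}(t(a)ub)=\psi_{n-1}(t(a)ut(a)^{-1})W_{\underline{\mu}}(t(a)b)$. The $u$-integral of $\Phi_0(ub)\psi_{n-1}(t(a)ut(a)^{-1})$ factors entrywise. Entries $u_{ij}$ with $j\ge i+2$ give plain Gaussian integrals, which contribute powers of the $b_j$. Each superdiagonal entry $u_{i,i+1}$ gives a Gaussian Fourier transform, as in Lemma~\ref{L:Fourier transform 2} with $N=0$, producing $e^{-2\pi a_i^2 b_{i+1}^{-2}}$ up to powers. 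Together with the diagonal part $e^{-2\pi\sum b_i^2}$, this expresses $\delta^{-1/2}W_{\underline{\mu}}(t(a))$ as an integral over $b\in T^+_{n-1}$ of $f_{\underline{\mu}}$ evaluated at the "product" coordinates of $t(a)b$, against explicit Gaussians and monomials. The monomials collect the modulus characters, $|\det b|^{\sigma}$, and the $\det$-contribution.

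Then I would take Mellin transforms in $a=(a_1,\dots,a_{n-2})$ at $w=(w_1,\dots,w_{n-2})$ and insert the Mellin inversion of $f_{\underline{\mu}}$ at the $b$-shifted argument, exactly as in the proof of Theorem~\ref{T:GL_n(C)}. After the change of variables separating $a$ from $b$, each one-dimensional integral is of the form $\int t^{z}e^{-2\pi t^2}d^\times t$, which is a $\Gamma_\C$-factor. The superdiagonal Gaussians pair $w_i$ with $z_i$ to give $\Gamma_\C(\tfrac{w_i-z_i}{2})$. The diagonal Gaussians, carrying the twist $\sigma$, pair $w_{i-1}$ with $z_i$ to give $\Gamma_\C(\tfrac{w_{i-1}-z_i}{2}+\sigma)$. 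The last diagonal coordinate sees only $\det$ and the central character $|\underline{\mu}|$, which gives the lone factor $\Gamma_\C(\tfrac{w_{n-2}}{2}+|\underline{\mu}|+\sigma)$. Dividing by $c_n\prod\Gamma_\C(\mu_i+\sigma)$ then gives the asserted formula, with $4^{-n+2}$ coming from the Gaussian normalizations.

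The main obstacle is the bookkeeping in the last two steps. One must choose coordinates on $T^+_{n-1}$ so that, after Mellin inversion, the variables genuinely decouple into a product of one-variable Gamma integrals with exactly the indicated pairing $(w_i,z_i)$, $(w_{i-1},z_i)$. One must also justify interchanging the $b$-integral with the $z$-contour integrals (choosing contours with large real parts, using Stirling decay of $\mathcal{M}f_{\underline{\mu}}$) and track the constant precisely. I would first carry out $n-1=2$ in full to fix the change of variables and constants, then mimic it inductively. A second point to check carefully is the eigenvalue normalization in the Godement--Jacquet step, that is, the shift $(n-2)/2$ and the constant $c_n$.
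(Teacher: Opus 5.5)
Your proposal is correct, and it takes a genuinely different route from the paper. The paper does not compute anything new here. It deduces the identity either from Ishii--Stade's real-case Proposition 2.6 combined with Stade's formula, or from an induction on $n$ using the recursive Mellin--Barnes representation \eqref{E:Ishii-Stade} (or Theorem~\ref{T:GL_n(C)}) together with Barnes' first lemma. That is a purely formal manipulation of contour integrals.

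You instead read the identity as the Mellin transform of the Hecke eigen-equation
$\int_{\GL_{n-1}(\C)}\Phi_0(h)|\det h|_\C^{\sigma+(n-2)/2}W_{\underline\mu}(gh)\,dh=\lambda(\sigma)W_{\underline\mu}(g)$.
The bookkeeping you flag as the main obstacle actually works in the naive coordinates $h=ubk$ with $b=\mathrm{diag}(b_1,\dots,b_{n-1})$:
\begin{itemize}
\item The $u_{ij}$-integrals produce $\prod_j b_j^{-2(j-1)}$ and $\prod_i e^{-2\pi a_i^2b_{i+1}^{-2}}$.
\item Combined with $\delta_{B_{n-1}(\C)}(b)^{-1/2}$ (from $dh$ and from $W(t(a)b)/\delta^{1/2}(t(a))$) and with $|\det b|_\C^{\sigma+(n-2)/2}$, these leave exactly $\prod_j b_j^{2\sigma}$.
\item Write $t(a)b=b_{n-1}\,t(a')$ with $a'_i=a_ib_i/b_{i+1}$ and insert Mellin inversion for $f_{\underline\mu}(a')$. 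Then $a_i$ meets only its Gaussian. The variable $b_j$ carries exponent $2\sigma+w_{j-1}-z_j$ for $j\le n-2$ (with $w_0=0$) and $2\sigma+2|\underline\mu|+w_{n-2}$ for $j=n-1$.
\item Everything therefore factors into integrals $\int_0^\infty t^xe^{-2\pi t^2}\,d^\times t=\tfrac14\Gamma_\C(x/2)$. This yields exactly the pairings $\Gamma_\C(\tfrac{w_i-z_i}{2})$, $\Gamma_\C(\tfrac{w_{i-1}-z_i}{2}+\sigma)$ and the lone factor $\Gamma_\C(\tfrac{w_{n-2}}{2}+|\underline\mu|+\sigma)$.
\end{itemize}
The same Iwasawa computation gives $\lambda(\sigma)=4^{-(n-1)}\prod_i\Gamma_\C(\mu_i+\sigma)$, up to the factor ${\rm Vol}(K_{n-1})$ that also appears on the left and cancels. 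Comparing $4^{-(n-2)}\cdot4^{-(n-1)}$ with $4^{-(n-1)}$ gives precisely the constant $4^{-n+2}$.

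What your approach buys is a one-shot proof for all $n$, with no induction and no Barnes lemma. It also explains conceptually why $L(\sigma,\pi_{\underline\mu})=\prod_i\Gamma_\C(\mu_i+\sigma)$ appears as the prefactor. The cost is analytic input that the paper's formal route sidesteps:
\begin{itemize}
\item continuity of the Whittaker functional under $\pi(\Phi_0|\det|^{s'})$ for $\Re\sigma\gg0$;
\item Fubini for the interchange of the $b$-integral with the $z$-contours, which requires choosing contours with $\Re(w_i-z_i)>0$ and $\Re(w_{i-1}-z_i)+2\Re\sigma>0$ inside the convergence region of $\mathcal{M}f_{\underline\mu}$;
\item extending across reducible $\pi_{\underline\mu}$ by continuity in $\underline\mu$;
\item meromorphic continuation in $\sigma$.
\end{itemize}
The paper's route, by contrast, stays entirely within the Mellin--Barnes calculus that it then reuses in Theorem~\ref{T:Asai local}.
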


\begin{proof}
In \cite[Proposition 2.6]{IS2013}, the result is established for class one Whittaker functions over $\R$. The assertion presented 
here can be deduced from that result in conjunction with Stade's \cite[Proposition 2.1]{Stade1995}. Alternatively, the assertion can be 
proved directly by induction following the arguments in \cite{IS2013}. Ultimately, the proof reduces to the Mellin–Barnes type 
integral representation (\ref{E:Ishii-Stade}) or Theorem \ref{T:GL_n(C)}, along with Barnes' first lemma, which states that
\begin{align*}
&\int_{\color{black}z} \Gamma_\C\left( \frac{z}{2}+a \right)
\Gamma_\C\left( \frac{z}{2}+b \right)
\Gamma_\C\left( -\frac{z}{2}+c \right)
\Gamma_\C\left( -\frac{z}{2}+d \right)\,\frac{dz}{2\pi\sqrt{-1}}\\
& 
= 
4\cdot\frac{\Gamma_\C(a+c)\Gamma_\C(a+d)\Gamma_\C(b+c)\Gamma_\C(b+d)}{\Gamma_\C(a+b+c+d)}.
\end{align*}
\end{proof}

\begin{thm}\label{T:Asai local}
We have
\[
Z(s,\varphi_\kappa,W_{[\underline{\nu},\underline{\kappa}]}) = 2^{\color{black}n(n-2)}\frac{\Gamma_\R(s+2\nu_1+\kappa)}{\Gamma_\R(s+2\nu_1+\epsilon)}\cdot L(s,\pi_{[\underline{\nu},\underline{\kappa}]},{\rm As}).
\]
Here $\epsilon \in \{0,1\}$ such that $\kappa \equiv \epsilon\,({\rm mod}\,2)$.
\end{thm}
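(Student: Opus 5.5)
The plan is to reduce $Z(s,\varphi_\kappa,W_{[\underline{\nu},\underline{\kappa}]})$ to a Mellin transform of $f_{[\underline{\nu},\underline{\kappa}],\underline{\ell}}$ for a single $\underline{\ell}$, evaluated at an explicit point. That value is then given by the Mellin--Barnes formula of Theorem \ref{T:GL_n(C)}. The remaining multiple Barnes integral is collapsed by induction on $n$, using Proposition \ref{P:IS} and Barnes' first lemma.

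\emph{Step 1 (reduction to the torus).} Over $F_0=\R$, write $g=u\,z\,t(a)\,k$ with $u\in N_n(\R)$, $z=r I_n$ ($r\in\R_+$), $a\in\R_+^{n-1}$ and $k\in{\rm O}_n(\R)$. By (\ref{E:equivariant kappa}) and the $K_n$-equivariance of $W_{[\underline{\nu},\underline{\kappa}]}$, the $k$-dependence cancels in the invariant pairing, since ${\rm O}_n(\R)\subset {\rm U}_n(\R)$. The integral over ${\rm O}_n(\R)$ then only contributes a volume constant. Conjugating $u$ by ${\rm diag}((\sqrt{-1})^{n-1},\dots,1)$ turns $\psi_n$ on $N_n(\R)$ into the trivial character, because ${\rm Re}(\sqrt{-1}\,x)=0$. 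Hence the $u$-integral is harmless, and the diagonal element only produces a unitary constant together with the $K_n$-action on the coefficient. Since $e_n t(a)=e_n$, we get $\varphi_\kappa(e_n z t(a))=r^\kappa X_n^\kappa e^{-\pi r^2}$. The pairing therefore only sees the coefficient indexed by $\underline{\ell}=(0,\dots,0,\kappa)$ in (\ref{E:vector valued Whittaker}), for which $\tilde{\ell}_i=0$ for $1\le i\le n-1$. Using the central character of $\pi_{[\underline{\nu},\underline{\kappa}]}$, the $r$-integral separates and gives a factor $\Gamma_\R(ns+2|\underline{\nu}|+\kappa)$, up to the precise normalization. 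The remaining $a$-integral is $\int_{\R_+^{n-1}} f_{[\underline{\nu},\underline{\kappa}],\underline{\ell}}(a)\prod_i a_i^{c_i(s)}\,d^\times a$, where the exponents $c_i(s)=i(n-i)s-\tfrac{i(n-i)}{2}+\ldots$ come from $|\det|^s$, $\delta_{B_n(\C)}^{1/2}$ and $\delta_{B_n(\R)}^{-1}$. This is the Mellin transform $(\mathcal{M}f_{[\underline{\nu},\underline{\kappa}],\underline{\ell}})$ at an explicit point.

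\emph{Step 2 (insert the Mellin--Barnes formula).} By Theorem \ref{T:GL_n(C)} with $\tilde{\ell}_i=0$, this Mellin transform equals
\[
\int_{z}\frac{dz}{(2\pi\sqrt{-1})^{n-2}}\,(\mathcal{M}f_{\tilde{\underline\nu}})(z)\prod_{i=1}^{n-2}\Gamma_\C\Bigl(\tfrac{s_i+\kappa-z_{i-1}}{2}+i\nu_1\Bigr)\Gamma_\C\Bigl(\tfrac{s_i-z_i}{2}+i\nu_1\Bigr)\cdot(\text{last pair}),
\]
evaluated at $s_i=c_i(s)$, with no $s$-integration left. The factor $\Gamma_\C\bigl(\tfrac{s_{n-1}}{2}+|\tilde{\underline\nu}|+(n-1)\nu_1\bigr)$ and the $z$-independent pieces already have the shape of factors $\Gamma_\C(s+\nu_1+\nu_j)$ of (\ref{E:Asai local}). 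The remaining integral has exactly the form $\int (\mathcal{M}f_{\tilde{\underline\nu}})(z)\prod\Gamma_\C(\tfrac{w_i-z_i}{2})\Gamma_\C(\tfrac{w_{i-1}-z_i}{2}+\sigma)\,dz$ of Proposition \ref{P:IS}, after a shift of the variables $z_i$ and a choice $\sigma=s+\nu_1+\text{const}$. The exception is the first factor, which carries $\kappa$ and involves only $z_0=0$, hence is constant.

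\emph{Step 3 (induction).} Proposition \ref{P:IS} then replaces the $z$-integral by $(\mathcal{M}f_{\tilde{\underline\nu}})(w)$ times $\prod_{j}\Gamma_\C(\nu_j-\nu_1+\sigma)$ divided by $\Gamma_\C(\tfrac{w_{n-2}}{2}+|\tilde{\underline\nu}|+\sigma)$. The factors $\prod_j\Gamma_\C(\nu_j-\nu_1+\sigma)$ produce the Asai factors $\Gamma_\C(s+\nu_1+\nu_j)$, $j\ge 2$, and the denominator cancels against the last Gamma factor. What remains is the analogous zeta-type expression for $\pi_{\tilde{\underline\nu}}$ on $\GL_{n-1}$, with $\kappa=0$. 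For that expression I will invoke the spherical case of the theorem inductively, having first proved it for $n=2$ directly, possibly quoting \cite{CCI2020}. This yields $\prod_{2\le i<j}\Gamma_\C(s+\nu_i+\nu_j)\prod_{i\ge2}\Gamma_\R(s+2\nu_i)$. The powers of $2$ accumulate to $2^{n(n-2)}$ through the constants $4^{-n+2}$ and the $2^{-4}$ factors.

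\emph{Main obstacle.} The delicate point is the $\nu_1$-part carrying $\kappa$: the first factor $\Gamma_\C(\tfrac{s_1+\kappa}{2}+\nu_1)$ together with the central $\Gamma_\R$ factor. Via the duplication formula $\Gamma_\C(x)=\Gamma_\R(2x)\Gamma_\R(2x+1)$ (up to constants), this must be matched with $\Gamma_\R(s+2\nu_1+\epsilon)$ and the Asai factor. This matching should produce precisely the ratio $\Gamma_\R(s+2\nu_1+\kappa)/\Gamma_\R(s+2\nu_1+\epsilon)$, which is a polynomial in $s$ of degree $\lfloor\kappa/2\rfloor$. I would first check the bookkeeping of the exponents $c_i(s)$ and these constants carefully for $n=2$, then run the induction.
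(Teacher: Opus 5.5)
Your route is the paper's: reduce to $T_n^+$, keep the single coefficient $\underline{\ell}=(0,\dots,0,\kappa)$, apply Theorem \ref{T:GL_n(C)}, collapse the $z$-integral with Proposition \ref{P:IS}, and induct with $\kappa=0$. However, two pieces of your bookkeeping are wrong, and as written they make Steps 2--3 fail.

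\textbf{The evaluation point.} On $T_n^+$ one has $\delta_{B_n(\C)}(t)^{1/2}=\delta_{B_n(\R)}(t)$. So the $\delta_{B_n(\C)}^{1/2}$ hidden in $W=\delta_{B_n(\C)}^{1/2}f$ cancels the measure factor $\delta_{B_n(\R)}^{-1}$ exactly. Also $|\det t(a)|_\R^{s}=\prod_i a_i^{is}$. Hence you need $(\mathcal{M}f_{[\underline{\nu},\underline{\kappa}],\underline{\ell}})(s,2s,\dots,(n-1)s)$, i.e.\ $c_i(s)=is$, not $i(n-i)s-\tfrac{i(n-i)}{2}+\cdots$.

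This is not cosmetic. The two factors attached to $z_i$ are $\Gamma_\C(\tfrac{s_i-z_i}{2}+i\nu_1)$ and $\Gamma_\C(\tfrac{s_{i+1}+\kappa-z_i}{2}+(i+1)\nu_1)$. Matching them with $\Gamma_\C(\tfrac{w_i-z_i}{2})\Gamma_\C(\tfrac{w_{i-1}-z_i}{2}+\sigma)$ forces $w_i=s_i+2i\nu_1$ and $2\sigma=s_{i+1}-s_{i-1}+4\nu_1+\kappa$ (with $s_0=0$). So Proposition \ref{P:IS} applies only if $s_{i+1}-s_{i-1}$ is independent of $i$. With $s_i=is$ you get $w_i=i(s+2\nu_1)$ and $\sigma=s+2\nu_1+\tfrac{\kappa}{2}$, and no shift of the $z_i$ is needed. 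A shift is not available anyway, since it would move the argument of $\mathcal{M}f_{\tilde{\underline\nu}}$. With your $c_i$, the required $\sigma$ depends on $i$.

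\textbf{The cancellation.} The denominator produced by Proposition \ref{P:IS} is $\Gamma_\C(\tfrac{w_{n-2}}{2}+|\tilde{\underline\nu}|+\sigma)=\Gamma_\C(\tfrac{ns+\kappa}{2}+|\underline{\nu}|)$. It cancels the central factor from your $r$-integral, not the last Gamma factor.

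The last factor $\Gamma_\C(\tfrac{(n-1)(s+2\nu_1)}{2}+|\tilde{\underline\nu}|)$ is not an Asai factor. It survives, and together with $(\mathcal{M}f_{\tilde{\underline\nu}})(s+2\nu_1,\dots,(n-2)(s+2\nu_1))$ it is exactly, up to a power of $2$, the spherical zeta integral on $\GL_{n-1}(\C)$ at $s+2\nu_1$. That is what lets the induction close; the paper phrases this as an inductive formula for $(\mathcal{M}f_{\underline\lambda})(s,\dots,(N-1)s)$. With your assignment, a stray $\Gamma_\R(ns+2|\underline\nu|+\kappa)$ remains and the leftover is not a lower-rank zeta integral. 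This is the source of your ``main obstacle''.

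After the correction there is no obstacle:
\begin{itemize}
\item The numerator of Proposition \ref{P:IS} gives $\prod_{j\ge 2}\Gamma_\C(s+\nu_1+\nu_j+\tfrac{\kappa}{2})=\prod_{j\ge2}L(s,\chi_{[\nu_1,\kappa]}\chi_{[\nu_j,0]}^c)$. Note the $\tfrac{\kappa}{2}$, which your Step 3 omits.
\item The pulled-out factor is $\Gamma_\C(\tfrac{s+\kappa}{2}+\nu_1)=2^{1-(s+2\nu_1+\kappa)/2}\Gamma_\R(s+2\nu_1+\kappa)$. It becomes the stated ratio times $L(s,\chi_{[\nu_1,\kappa]}|_{\R^\times})=\Gamma_\R(s+2\nu_1+\epsilon)$ simply by multiplying and dividing by $\Gamma_\R(s+2\nu_1+\epsilon)$.
\end{itemize}
No duplication formula is involved. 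The one you quote should read $\Gamma_\C(x)=\Gamma_\R(x)\Gamma_\R(x+1)$.
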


\begin{proof}
By the equivariant property of $W_{[\underline{\nu},\underline{\kappa}]}$ and (\ref{E:equivariant kappa}), we have
\[
\left\< \varphi_\kappa(xk),\, W_{[\underline{\nu},\underline{\kappa}]}(gk) \right\>_{\rho_\kappa} 
= 
\left\< \varphi_\kappa(x),\, W_{[\underline{\nu},\underline{\kappa}]}(g) \right\>_{\rho_\kappa}
\]
for all $x \in M_{1,n}(\R)$, $g \in \GL_n(\C)$, and $k \in {\rm O}_n(\R)$.
Therefore, we have
\begin{align*}
&Z(s,\varphi_\kappa,W_{[\underline{\nu},\underline{\kappa}]}) \\
& = \int_{T_n^+} \left\< \varphi_\kappa(e_nt),\, W_{[\underline{\nu},\underline{\kappa}]}({{\rm diag}((\sqrt{-1})^{n-1},(\sqrt{-1})^{n-2},...,1)}t)\right\>_{\rho_\kappa}
 |\det(t)|_\R^s\delta_{B_n(\R)}(t)^{-1}\,dt\\
& =
 \<X_n^{\kappa},X_n^\kappa\>_{\rho_\kappa}\int_{\R_+^n} \,\prod_{i=1}^n d^\times a_i\,a_n^\kappa e^{-\pi a_n^2} 
 W_{[\underline{\nu},\underline{\kappa}],\,(0,...,\kappa)}(t(a))\delta_{B_n(\C)}(t(a))^{-1/2} \prod_{i=1}^n a_i^{is}\\
& = 
\left(\int_0^\infty a_n^{ns+\kappa+2|\underline{\nu}|}e^{-\pi a_n^2}\,d^\times a_n\right) \int_{\R_+^{n-1}}\,
\prod_{i=1}^{n-1} d^\times a_i\, f_{[\underline{\nu},\underline{\kappa}],\,(0,...,\kappa)}(a_1,...,a_{n-1})
\prod_{i=1}^{n-1}a_i^{is}\\
& = 
2^{-2+(ns+\kappa+2|\underline{\nu}|)/2}\Gamma_\C\left(\frac{ns+\kappa}{2}+|\underline{\nu}|\right)
\cdot(\mathcal{M}f_{[\underline{\nu},\underline{\kappa}],\,(0,...,\kappa)})(s,...,is,...,(n-1)s).
\end{align*}
By Theorem \ref{T:GL_n(C)}, we have
\begin{align*}
&(\mathcal{M}f_{[\underline{\nu},\underline{\kappa}],\,(0,...,\kappa)})(s,...,is,...,(n-1)s)\\
& = \int_{\color{black}z_1,...,z_{n-2}}\frac{dz}{(2\pi\sqrt{-1})^{n-2}}\,(\mathcal{M}{f_{\tilde{\underline \nu}}})(z_1,...,z_{n-2})\\
&\quad\times\prod_{i=1}^{n-2} \Gamma_\C\left(\frac{is+\kappa-z_{i-1}}{2}+i\nu_1\right)\Gamma_\C\left(\frac{is-z_i}{2}+i\nu_1\right)\\
&\quad\times
\Gamma_\C\left(\frac{(n-1)s+\kappa-z_{n-2}}{2}+(n-1)\nu_1\right)
\Gamma_\C\left(\frac{(n-1)s}{2}+|\tilde{\underline\nu}|+(n-1)\nu_1\right)\\
&=
\Gamma_\C\left( \frac{s+\kappa}{2}+\nu_1 \right)\Gamma_\C\left(\frac{(n-1)s}{2}+|\tilde{\underline\nu}|+(n-1)\nu_1\right)\\
& \quad\times \int_{\color{black}z_1,...,z_{n-2}}\frac{dz}{(2\pi\sqrt{-1})^{n-2}}\,(\mathcal{M}{f_{\tilde{\underline \nu}}})(z_1,...,z_{n-2})\\
&\quad\quad\times\prod_{i=1}^{n-2} \Gamma_\C\left(\frac{is-z_i}{2}+i\nu_1\right)\Gamma_\C\left(\frac{(i+1)s
+\kappa-z_i}{2}+(i+1)\nu_1\right).
\end{align*}
In Proposition \ref{P:IS}, let 
\begin{align*}
\begin{cases}
\underline{\mu} = \tilde{\underline{\nu}},\\
w_i = is+2i\nu_1, & \mbox{ $1 \leq i \leq n-2$}\\
\sigma = s+2\nu_1+\frac{\kappa}{2},
\end{cases}
\end{align*}
we see that the above integral in $z_1,...,z_{n-2}$ is equal to 
\[
4^{n-2}\frac{\prod_{i=1}^{n-1}
\Gamma_\C\left(s+\nu_1+\nu_{i+1}+\frac{\kappa}{2}\right)}{ \Gamma_\C\left(\frac{ns+\kappa}{2}+|\underline{\nu}| \right)}\cdot 
(\mathcal{M}f_{\tilde{\underline{\nu}}})(s+2\nu_1,...,i(s+2\nu_1),...,(n-2)(s+2\nu_1)).
\]
Note that 
\[
\Gamma_\C\left(s+\nu_1+\nu_{i+1}+\frac{\kappa}{2}\right) = L(s,\chi_{[\nu_1,\kappa]}\chi_{[\nu_{i+1},0]}^c),\quad 1 \leq i \leq n-1
\]
and
\[
\Gamma_\C\left( \frac{s+\kappa}{2}+\nu_1 \right) = 2^{1-(s+2\nu_1+\kappa)/2}\Gamma_\R(s+2\nu_1+\kappa).
\]
In conclusion, we have
\begin{align}\label{E:Asai local pf 1}
\begin{split}
&(\mathcal{M}f_{[\underline{\nu},\underline{\kappa}],\,(0,...,\kappa)})(s,...,is,...,(n-1)s)\\
& = 
2^{2(n-2)+1-(s+2\nu_1+\kappa)/2}\frac{\Gamma_\R(s+2\nu_1+\kappa)}{\Gamma_\R(s+2\nu_1+\epsilon)}
\frac{\Gamma_\C\left(\frac{(n-1)(s+2\nu_1)}{2} 
+ 
|\tilde{\underline{\nu}}| \right)}{\Gamma_\C\left( \frac{ns+\kappa}{2} + |\underline{\nu}| \right)} \\
&\quad\times L(s,\chi_{[\nu_1,\kappa]}\vert_{\R^\times})\prod_{i=1}^{n-1}L(s,\chi_{[\nu_1,\kappa]}\chi_{[\nu_{i+1},0]}^c)\\
&\quad \times (\mathcal{M}f_{\tilde{\underline{\nu}}})(s+2\nu_1,...,i(s+2\nu_1),...,(n-2)(s+2\nu_1)).
\end{split}
\end{align}
By (\ref{E:Asai local}), we have
\[
L(s,\pi_{[\underline{\nu},\underline{\kappa}]},{\rm As}) = L(s,\chi_{[\nu_1,\kappa]}\vert_{\R^\times})
\prod_{i=1}^{n-1}L(s,\chi_{[\nu_1,\kappa]}\chi_{[\nu_{i+1},0]}^c) \cdot L(s+2\nu_1,\pi_{\tilde{\underline{\nu}}},{\rm As}).
\]
In particular, based on (\ref{E:Asai local pf 1}) inductively (with $\kappa=0$), for all $N \geq 2$ and $\underline{\lambda} \in \C^N$ we have
\begin{align*}
(\mathcal{M}f_{\underline{\lambda}})(s,...,is,...,(N-1)s) & = 2^{\sum_{i=1}^{N-1}\left(2(N-i-1)+1-(s+2\lambda_i)/2\right)} \frac{\Gamma_\C(\tfrac{s+2\lambda_n}{2})}{\Gamma_\R(s+2\lambda_n)} \frac{L(s,\pi_{\underline{\lambda}},{\rm As})}{\Gamma_\C\left( \frac{Ns}{2}+|\underline{\lambda}| \right)}\\
& = 2^{-Ns/2+{\color{black}N(N-2)+2-|\underline{\lambda}|}}\cdot \frac{L(s,\pi_{\underline{\lambda}},{\rm As})}{\Gamma_\C\left( \frac{Ns}{2}+|\underline{\lambda}| \right)}.
\end{align*}
The assertion then follows immediately from this formula with $N=n-1$ and $\underline{\lambda} = \tilde{\underline{\nu}}$.
This completes the proof.
\end{proof}

\subsection*{Acknowledgements}
The authors would like to thank Wan-Yu Tsai, Chen-Bo Zhu, and Binyong Sun for kindly answering their questions and for suggesting appropriate references. The authors also thank Hiraku Atobe, Miki Hirano, Taku Ishii, and Tadashi Miyazaki for their comments, and Kenichi Namikawa and Fu-Tsun Wei for their constant interest in this project. The authors particularly thank the National Center for Theoretical Sciences for its constant support and hospitality during the preparation of this paper.
This work is partially supported by the NSTC, under grant numbers 113-2115-M-007-002-MY3 and 112-2115-M-032-004-MY3, for the first and second authors, respectively.

\end{document}